\numberwithin{equation}{section}
\theoremstyle{plain}
\newtheorem{Proposition}[equation]{Proposition}
\newtheorem{Corollary}[equation]{Corollary}
\newtheorem*{Corollary*}{Corollary}
\newtheorem{Theorem}[equation]{Theorem}
\newtheorem*{Theorem*}{Theorem}
\theoremstyle{definition}
\newtheorem{Definition}[equation]{Definition}
\setlist[enumerate]{leftmargin=*}
\setlist[itemize]{leftmargin=*}
\def\C{\mathbb{C}}
\def\R{\mathbb{R}}
\def\D{\mathbb{D}}
\def\T{\mathbb{T}}
\def\N{\mathbb{N}}
\def\Z{\mathbb{Z}}
\renewcommand{\leq}{\leqslant}
\renewcommand{\geq}{\geqslant}
\renewcommand{\subset}{\subseteq}
\renewcommand{\phi}{\varphi}
\renewcommand{\vec}[1]{{\bf #1}}
\newcommand{\0}{{\color{lightgray}0}}
\author[W. Ross]{William T. Ross}
	\address{Department of Mathematics and Statistics, University of Richmond, Richmond, VA 23173, USA}
	\email{wross@richmond.edu}
	\subjclass[2010]{ 47B35, 47B02, 47A05}
\title{The Ces\`{a}ro Operator}
\keywords{Ces\`{a}ro summability, Ces\`{a}ro matrix, Hardy spaces, Ces\`{a}ro operator}
\thanks{}
\begin{document}

\begin{abstract}
This paper surveys the various aspacts of the Ces\`{a}ro operator with a special emphasis on the Hilbert space setting of $\ell^2$. We include a discussion of summability methods of Fourier series, the Ces\`{a}ro matrix and integral operator, and the Ces\`{a}ro operator on the Hilbert space $\ell^2$. In this setting of $\ell^2$ we cover the spectral properties of the Ces\`{a}ro operator as well as a treatment of its hyponormality (via posinormal operators) and subnormality (via a multiplication operator of Kriete and Trutt). Using the results of Kriete and Trutt, we describe the commutant of the Ces\`{a}ro operator as well as its bounded square roots. The Ces\`{a}ro operator has a rich lattice of invariant subspaces whose description remains unknown and we suspect might never be fully understood. We survey some results which display the complexity of these invariant subspaces and provide the reader with several paths forward for further discussion. Though the  Ces\`{a}ro operator was originally explored in the $\ell^2$ and Hardy space settings, it has been explored in various other settings such as the general Hardy spaces as well as the Bergman spaces. Finally, we explore some of the so-called generalized Ces\`{a}ro operator which are classes of integral operators on the Hardy spaces that connect to many areas of classical function theory. 
\end{abstract}

\maketitle

\section{Introduction}

There has been renewed interest in the classical Ces\`{a}ro operator and its generalizations as of late \cite{AglMc, PartGall, MR4365517, MR3342485, MR2390921, MR4366190} so perhaps it is a good time to put together an extended survey of what is currently known about this operator. Most of us in analysis know the name Ces\`{a}ro from his summability method for infinite series and the important role this plays in summing the Fourier series of an integrable function. We will outline some of the aspects of Ces\`{a}ro's life and the historical development of his summability method in \S \ref{CesaroSum}. 

Graduating from Ces\`{a}ro summability, which involves creating the sequence of Ces\`{a}ro averages
$$\Big(\frac{a_0 + a_1 + a_2 + \cdots + a_{N}}{N + 1}\Big)_{N \geq 0}$$ from the sequence $(a_n)_{n \geq 0}$ of complex numbers, to considering the linear transformation 
$$(a_n)_{n \geq 0} \mapsto \Big(\frac{a_0 + a_1 + a_2 + \cdots + a_{N}}{N + 1}\Big)_{N \geq 0}$$
on the sequence space $\ell^2$, was taken up by Brown, Halmos, and Shields \cite{MR187085}. By representing this linear transformation with respect to the standard orthonormal basis $(\vec{e}_n)_{n \geq 0}$ for $\ell^2$, as the Ces\`{a}ro matrix 
$$C := \begin{bmatrix}
1 & \0 & \0 & \0 & \0 &  \cdots\\[3pt]
\frac{1}{2} & \frac{1}{2} & \0 & \0 & \0 & \cdots\\[3pt]
\frac{1}{3} & \frac{1}{3} & \frac{1}{3} & \0 & \0 & \cdots\\[3pt]
\frac{1}{4} & \frac{1}{4} & \frac{1}{4} & \frac{1}{4} & \0 & \cdots\\[3pt]
\frac{1}{5} & \frac{1}{5} & \frac{1}{5} & \frac{1}{5} & \frac{1}{5} & \cdots\\
\vdots & \vdots & \vdots & \vdots & \vdots & \ddots
\end{bmatrix},$$
they were able to describe its (operator) norm $\|C\|$ and its spectrum $\sigma(C)$. We survey this in \S\ref{cvgsdhfjgksa111}
and \S \ref{spectrusms8}. Along the way, we discuss the Ces\`{a}ro matrix as a linear transformation on the set of {\em all} complex sequences in \S \ref{section444}.

Brown, Halmos, and Shields also showed that the Ces\`{a}ro operator on $\ell^2$ is hyponormal, i.e.,  $C^{*}C - C C^{*} \geq 0$ (with, of course, normal being equality). By representing the Ces\`{a}ro operator as a multiplication operator on the closure of the polynomials in an $L^2(\mu)$ space, Kriete and Trutt \cite{MR281025}, in one of the gems in the subject, showed that the Ces\`{a}ro operator satisfies the stronger condition of subnormality (a normal operator restricted to one of its invariant subspaces). This will be outlined in \S \ref{HN} and \S \ref{KT}. 

As a consequence of the Kriete--Trutt result, and some known results concerning the commutant of a multiplication operator on a reproducing kernel Hilbert space of analytic functions, one can describe the bounded operators on $\ell^2$ that commute with the Ces\`{a}ro operator. By a result of Shields and Wallen \cite{MR287352}, these operators coincide with the closure in the weak operator topology of $p(C)$, where $p \in \C[z]$. This is discussed in \S \ref{commuette} -- with a corresponding discussion  in \S \ref{section444}  of the linear transformations on the space of all complex sequences that commute with the Ces\`{a}ro matrix.  We use this discussion in \S \ref{sqrt} to show that the Ces\`{a}ro operator has exactly two bounded square roots and many other unbounded ones. 

The Ces\`{a}ro operator has several interesting and useful connections to strongly continuous semi-groups of composition operators on the Hardy space $H^2$. We explore some of these connections in \S \ref{CCO} and then again in \S \ref{ISCOp} when we discuss the complexity of the invariant subspaces of the Ces\`{a}ro operator. 

We end this survey with a selective survey in \S \ref{Bergman} and \S \ref{g} of the Ces\`{a}ro operator on other spaces of analytic functions along with generalizations of the Ces\`{a}ro operator that are inspired by its integral representation on the Hardy space $H^2$.

\section{Ces\`{a}ro summability}\label{CesaroSum}

Many people who work on the Ces\`{a}ro operator are not familiar with its namesake Ernesto Ces\`{a}ro who was born in Naples, Italy on March 12, 1859 and died tragically, while trying to save his son from drowning, in Torre Annunziata, Italy, on September 12, 1906. He began his university studies in \`{E}cole des Mines of Li\`{e}ge (in Belgium) under Eug\`{e}ne Catalan and continued his studies in Rome. He held university positions in Palermo, Naples, and Bologna. 

Though readers of this paper probably know Ces\`{a}ro for his concept of summability of infinite series, one might not know that Ces\`{a}ro was also a prolific author with varied interests in several hundred papers in analysis, geometry, number theory, and mathematical physics. 

Traditionally, one sums an infinite series $a_0 + a_1 + a_2 + \cdots$ of complex numbers by considering
\begin{equation}\label{classiffgsummera}
\lim_{N \to \infty} S_N, \quad S_{N} := \sum_{j = 0}^{N} a_j,
\end{equation}
the limit of the partial sums $S_N$. When the limit exists, we say the series converges and its value is denoted by 
$\sum_{n = 0}^{\infty} a_n$. When the limit $\lim_{N \to \infty} S_N$ fails to exist, we declare the series $a_0 + a_1 + a_2 + \cdots$ to be divergent. In some sense, this definition of convergence is rather restrictive (and perhaps capricious) in that one runs into trouble with the very natural problem of summing a Fourier series. It is well known that for an $f \in L^1 := L^1(\T, m)$ there are technical problems in summing the Fourier series 
$$\sum_{n = -\infty}^{\infty} \widehat{f}(n) \xi^{n}.$$ In the above, $\T = \{\xi \in \C: |\xi| = 1\}$ is the unit circle in the complex plane $\C$, $m$ is standard normalized Lebesgue measure (i.e., $d \theta/2\pi$) on $\T$, and 
$$\widehat{f}(n) := \int_{\T} f(\xi) \overline{\xi}^{n} dm(\xi), \quad n \in \Z,$$
is the $n$th Fourier coefficient of $f$. An example of du Bois-Reymond from 1873 \cite{Reymond1873} produces a continuous function $\T$ whose Fourier series diverges at $\xi = 1$. An even more pathological example of Kolmogorov from 1923 \cite{KolFS} produces an $L^1$ function whose Fourier series diverges at {\em every} point of $\T$. In spite of these pathologies, one still needs to somehow relate a function with its Fourier series in a meaningful way. One way out of this conundrum (there are several) involves a summation method of Ces\`{a}ro \cite{Cesaro1890}.

\begin{Definition}
For a sequence $(a_n)_{n \geq 0}$ of complex numbers, we say the infinite series $\sum_{n = 0}^{\infty} a_n$ is {\em Ces\`{a}ro summable} to $L$ if 
$$L = \lim_{N \to \infty} \frac{1}{N + 1} \sum_{n = 0}^{N} S_n.$$
\end{Definition}

Notice how $L$ is the limit of the averages of the partial sums. The reader might wonder why we are indexing our sequences starting with $a_0$ instead of $a_1$. As we shall see momentarily, when we recast the Ces\`{a}ro operator from the setting of vector spaces of sequences  to that of vector spaces of analytic functions, we will identify the sequence $(a_n)_{n \geq 0}$ with the power series $a_0 + a_1 z + a_2 z^2 + \cdots$.

The reader can verify that the famous {\em Grandi} series 
$$1 - 1 + 1 - 1 + 1 - \cdots$$ diverges in the ``usual sense'' of \eqref{classiffgsummera} but is Ces\`{a}ro summable to $\tfrac{1}{2}$. A nice real analysis exercise will confirm the following. 

\begin{Proposition}
If $(a_n)_{n \geq 0}$ is a sequence of complex numbers for which $S_{N} \to L$, then the series $\sum_{n = 0}^{\infty} a_n$ is Ces\`{a}ro summable to $L$.
\end{Proposition}

%There is also a notion of $(C, 0)$, $(C, 1)$, $(C, 2)$, etc. summability, where $(C, 0)$ is ``ordinary convergence'', $(C, 1)$ convergence is the Ces\`{a}ro sum of the sequence $(a_n)_{n \geq 0}$ as above, $(C, 1)$ is the Ces\`{a}ro sum of the sequence $(\tfrac{1}{N} (S_0 + S_1 + \cdots + S_{N})_{N \geq 0}$ and so on. Furthermore, if a series $(C, k)$ converges, is also $(C, k')$ converges for all $k > k'$.

Well known theorems of Fej\'{e}r and Lebesgue \cite{Hoffman} show that although the Fourier series of an $L^1$ function need not converge in the usual sense, it does converge in the sense of Ces\`{a}ro. 

\begin{Theorem}\label{Fejer}
The Fourier series of  an $f \in L^1$ is Ces\`{a}ro summable to $f$ almost everywhere on $\T$.
\end{Theorem}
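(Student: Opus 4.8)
The plan is to prove the classical Fej\'er--Lebesgue theorem by realizing the Ces\`aro means of the Fourier series as convolution against the Fej\'er kernel and then exploiting that this kernel is a positive approximate identity. First I would write out the $N$th Ces\`aro mean
$$\sigma_N f(\xi) := \frac{1}{N+1} \sum_{n=0}^{N} S_n f(\xi),$$
where $S_n f$ is the $n$th symmetric partial sum of the Fourier series, and verify by a direct interchange of finite sums that $\sigma_N f = f * K_N$, where $K_N$ is the Fej\'er kernel
$$K_N(e^{i\theta}) = \sum_{j=-N}^{N} \Big(1 - \frac{|j|}{N+1}\Big) e^{ij\theta} = \frac{1}{N+1} \left( \frac{\sin\big((N+1)\theta/2\big)}{\sin(\theta/2)} \right)^2.$$
The closed form on the right is the key computational identity; it is obtained by summing the Dirichlet kernels $D_n$ (whose average is $K_N$) or, equivalently, by recognizing $(N+1) K_N$ as $|1 + e^{i\theta} + \cdots + e^{iN\theta}|^2$.

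From the closed form I would read off the three properties that make $(K_N)$ an approximate identity: $K_N \geq 0$; $\int_{\T} K_N \, dm = 1$ (immediate from the Fourier-series form, since the constant term is $1$); and for every fixed $\delta \in (0,\pi)$, $\sup_{\delta \le |\theta| \le \pi} K_N(e^{i\theta}) \to 0$ as $N \to \infty$, which follows because on that range $\sin(\theta/2)$ is bounded below, so $K_N \leq C_\delta/(N+1)$. Positivity is exactly what distinguishes the Fej\'er kernel from the Dirichlet kernel and is what rescues convergence where ordinary summation fails.

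Next I would invoke the Lebesgue differentiation theorem: almost every $\xi_0 \in \T$ is a Lebesgue point of $f$, meaning
$$\frac{1}{2h} \int_{-h}^{h} \big| f(\xi_0 e^{-it}) - f(\xi_0) \big| \, dt \to 0 \quad \text{as } h \to 0^+.$$
Fix such a $\xi_0$. Writing $\sigma_N f(\xi_0) - f(\xi_0) = \int_{\T} \big(f(\xi_0 \eta^{-1}) - f(\xi_0)\big) K_N(\eta) \, dm(\eta)$ using $\int K_N = 1$, I would split the integral into the region $|\theta| < \delta$ and its complement. On the far region, $|f(\xi_0\eta^{-1}) - f(\xi_0)|$ is controlled crudely by $\|f\|_{L^1} + |f(\xi_0)|$ times $\sup_{|\theta|\ge \delta} K_N$, which $\to 0$. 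On the near region one bounds $K_N$ by the standard pointwise estimate $K_N(e^{i\theta}) \leq \min\{N+1, \, \pi^2/((N+1)\theta^2)\}$, dyadically decomposes $[-\delta,\delta]$, and uses the Lebesgue-point condition to make each piece small; choosing $\delta$ small first makes the near part $< \varepsilon$ uniformly in $N$, and then letting $N\to\infty$ kills the far part.

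The main obstacle is the near-diagonal estimate: one must convert the Lebesgue-point hypothesis, which is a statement about the \emph{average} of $|f(\xi_0 e^{-it}) - f(\xi_0)|$ over symmetric intervals, into a bound on an integral \emph{weighted} by $K_N$. The standard device is to introduce $\Phi(h) := \int_0^h |f(\xi_0 e^{-it}) - f(\xi_0)| + |f(\xi_0 e^{it}) - f(\xi_0)| \, dt$, note $\Phi(h) = o(h)$ by the Lebesgue-point property, and then estimate $\int_0^\delta (\text{stuff}) \, K_N$ by summing over dyadic subintervals $[2^{-k-1}\delta, 2^{-k}\delta]$ where $K_N \lesssim 1/((N+1)(2^{-k}\delta)^2)$, together with an Abel-summation / integration-by-parts argument against $\Phi$. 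Everything else — the convolution identity, the closed form of $K_N$, and the far-region estimate — is routine once the kernel is in hand. I would close by remarking that the same argument gives the stronger statement that $\sigma_N f \to f$ in $L^1$ norm for every $f \in L^1$, and uniformly if $f$ is continuous (Fej\'er's original theorem), since then the Lebesgue-point estimate is uniform in $\xi_0$.
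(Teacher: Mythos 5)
Your proposal is correct and follows exactly the route the paper sketches: realize the Ces\`aro means as convolution with the Fej\'er kernel and use its approximate-identity properties, with the Lebesgue-point and dyadic near-diagonal estimates supplying the details the paper leaves to the cited references. No gaps.
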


The proof of this theorem is often the starting point of just about any harmonic analysis course. Indeed, if 
$$(S_{n} f)(\xi) := \sum_{k = -n}^{n} \widehat{f}(n) \xi^k$$
is
the $n$th partial sum of the Fourier series for $f$, then 
$$(S_{n} f)(\xi) = \int_{\T} f(w) \Big(\sum_{k = -n}^{n} (\xi \overline{w})^k\Big) dm(w).$$
Thus,
$$(\sigma_N f)(\xi):= \frac{1}{N + 1} \sum_{n = 0}^{N} (S_{n} f)(\xi),$$
the $N$th Ces\`{a}ro mean of the Fourier series of $f$, becomes 
$$\int_{\T} f(w) \Big(\frac{1}{N + 1} \sum_{n = 0}^{N} \sum_{k = -n}^{n} (\xi \overline{w})^k\Big) dm(w).$$
The quantity 
$$ \frac{1}{N + 1} \sum_{n = 0}^{N} \sum_{k = -n}^{n} (\xi \overline{w})^k$$
in the integral above
is known as the Fej\'{e}r kernel and has the requisite properties needed to prove Theorem \ref{Fejer}.

Before moving on, let us mention that properties of the Fej\'{e}r kernel show that if $f$ is continuous on $\T$, then $\sigma_{N}(f) \to f$ uniformly (which gives a proof of the Weierstrass theorem on the density of the trigonometric polynomials in the Banach space of continuous functions on $\T$). Furthermore,  if $f \in L^p$, $1 \leq p < \infty$, then $\sigma_{N}(f) \to f$ in $L^p$ norm. If $f \in L^{\infty}$, then $\sigma_{N}(f) \to f$ weak-$*$.

There are higher order summability methods for a series $a_0 + a_1 + a_2 + \cdots$.  Define 
$$H_{N}^{0} := \sum_{j = 0}^{N} a_j,$$
$$H_{N}^{1} := \frac{H_{0}^{0} + H_{1}^{0} + H_{2}^{0} + \cdots + H_{N}^{0}}{N + 1},$$
$$H_{N}^{2} := \frac{H_{0}^{1} + H_{1}^{1} + H_{2}^{1} + \cdots + H_{N}^{1}}{N + 1},$$ and for $r \in \N$, 
$$H_{N}^{r} := \frac{H_{0}^{r - 1} + H_{1}^{r - 1} + H_{2}^{r - 1} + \cdots + H_{N}^{r - 1}}{N + 1}.$$ 
We say a series $a_0 + a_1 + a_2 + \cdots$ is {\em H\"{o}lder summable of order $r$}, written $(H, r)$ summable, to $L$ if
$$\lim_{N \to \infty} H_{N}^{r} = L.$$
See \cite{Holder} for the original reference. 

Let 
$$S_{N}^{0} := a_0 + a_1 + a_2 + \cdots + a_{N},$$
$$S_{N}^{1} := S_{0}^{0} + S_{1}^{0} + S_{2}^{0} + \cdots + S_{N}^{0},$$
$$S_{N}^{2} := S_{0}^{1} + S_{1}^{1} + S_{2}^{1} + \cdots + S_{N}^{1},$$
and for $r \in \N$, 
$$S_{N}^{r} := S_{0}^{r - 1} + S_{1}^{r - 1} + S_{2}^{r - 1} + \cdots + S_{N}^{r - 1}.$$
We say a series $a_0 + a_1 + a_2 + \cdots$ is {\em Ces\`{a}ro summable of order $r$}, written $(C, r)$ summable, to $L$ if 
$$\lim_{N \to \infty} \frac{r! S_{N}^{r}}{(N + 1)^r} = L.$$
Note that $(H, 1)$ and $(C, 1)$ summability are the same. It is also known that if a series is $(H, s)$  summable to $L$ for some $s$, then it is $(H, r)$ summable to $L$ for all $r \geq s$ (similarly for $(C, r)$ summability). In fact, a series is $(H, r)$ summable to $L$ if and only if it is $(C, r)$ is summable to $L$. We will revisit these summability methods as matrices momentarily. 

For example, the $(C, p + 1)$ sum of the series 
$$1^p - 2^p + 3^p - 4^p + \cdots$$
is equal to 
$$\frac{2^{p + 1} - 1}{p + 1} B_{p + 1},$$
where $B_{p + 1}$ is the corresponding Bernoulli number.
Note that when $p = 0$, we have $B_{1} = \tfrac{1}{2}$ and $(C, 1)$ sum of the series 
$1 - 1 + 1 - 1 + 1 + \cdots$ is $\tfrac{1}{2}$ which we discussed earlier. When $p = 1$, we have $B_{2} = \tfrac{1}{6}$ and the $(C, 2)$ sum of the series $1 - 2 + 3 - 4 + 5 - \cdots$ is $\tfrac{1}{4}$.
 In fact, Ces\`{a}ro found these formulas curious and said \cite{Camazing}: ``I say that, although these formulas are false, they may be used as a base of a theory, which shall not be more absurd than the theory of imaginaries.''

For further reading on all things concerning both summability methods of series of constants as well as Fourier series, we direct the reader to two well-known texts. The first is Hardy's classic text \cite{Hardy6} on various aspects of divergent series and summability methods. The second is Zygmund's thorough treatment of Fourier series \cite{MR1963498}. For a well-documented history behind what Ces\`{a}ro was thinking when he developed his summability method, the paper \cite{MR1705301} is an excellent resource. 

\section{The Ces\`{a}ro matrix}\label{section444}

For a sequence $(a_n)_{n \geq 0}$ of complex numbers,  the Ces\`{a}ro averages
$$\sigma_{N} := \frac{1}{N + 1} \big(S_0 + S_1 + S_2 + \cdots + S_{N}\big)$$ 
can be obtained via a linear transformation on the vector space $\mathcal{V}$ of all complex sequences as follows. By means of the {\em Ces\`{a}ro matrix}
$$C := \begin{bmatrix}
1 & \0 & \0 & \0 & \0 &  \cdots\\[3pt]
\frac{1}{2} & \frac{1}{2} & \0 & \0 & \0 & \cdots\\[3pt]
\frac{1}{3} & \frac{1}{3} & \frac{1}{3} & \0 & \0 & \cdots\\[3pt]
\frac{1}{4} & \frac{1}{4} & \frac{1}{4} & \frac{1}{4} & \0 & \cdots\\[3pt]
\frac{1}{5} & \frac{1}{5} & \frac{1}{5} & \frac{1}{5} & \frac{1}{5} & \cdots\\[-3pt]
\vdots & \vdots & \vdots & \vdots & \vdots & \ddots
\end{bmatrix},$$
observe that 
\begin{equation}\label{zppsooIDIIFUUG}
\begin{bmatrix}
1 & \0 & \0 & \0 & \0 &  \cdots\\[3pt]
\frac{1}{2} & \frac{1}{2} & \0 & \0 & \0 & \cdots\\[3pt]
\frac{1}{3} & \frac{1}{3} & \frac{1}{3} & \0 & \0 & \cdots\\[3pt]
\frac{1}{4} & \frac{1}{4} & \frac{1}{4} & \frac{1}{4} & \0 & \cdots\\[3pt]
\frac{1}{5} & \frac{1}{5} & \frac{1}{5} & \frac{1}{5} & \frac{1}{5} & \cdots\\[-3pt]
\vdots & \vdots & \vdots & \vdots & \vdots & \ddots
\end{bmatrix}
 \begin{bmatrix} 
S_0\\[3pt]
S_1\\[3pt]
S_2\\[3pt]
S_3\\[3pt]
S_4\\[3pt]
\vdots\end{bmatrix} 
=  \begin{bmatrix} 
S_0\\[3pt]
\tfrac{1}{2} (S_0 + S_1)\\[3pt]
\tfrac{1}{3} (S_0 + S_1 + S_2)\\[3pt]
\tfrac{1}{4} (S_0 + S_1 + S_2 + S_3)\\[3pt]
\tfrac{1}{5} (S_0 + S_1 + S_2 + S_3 + S_4)\\[3pt]
\vdots\end{bmatrix}= 
\begin{bmatrix}
\sigma_0\\[3pt]
\sigma_1\\[3pt]
\sigma_2\\[3pt]
\sigma_3\\[3pt]
\sigma_4\\[3pt]
\vdots
\end{bmatrix}.
\end{equation}

Not only does the matrix $C$ produce the Ces\`{a}ro partial sums, it is the starting point in studying the linear transformation $\vec{a} \mapsto C \vec{a}$ on $\mathcal{V}$. Let us first explore the eigenvalues and eigenvectors of $C$ on $\mathcal{V}$. 
 For each $m \in \N_{0} := \N \cup \{0\}$, define the sequence (as a column vector) 
\begin{equation}\label{bmmmmm}
\vec{b}_{m} = \begin{bmatrix} 0 & 0 & 0 & \cdots & 0 & {m + 0 \choose m} & {m  + 1\choose m} & {m  + 2\choose m} & {m  + 3\choose m} & \cdots\end{bmatrix}^{T}.
\end{equation}

\begin{Proposition}\label{EVEV}
For each $m \in \N_0$, 
$$C \vec{b}_m  = \frac{1}{m + 1} \vec{b}_m.$$
Moreover, each eigenspace is one dimensional. 
\end{Proposition}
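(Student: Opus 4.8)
The plan is to compute $C\vec{b}_m$ one coordinate at a time and recognize the result via a standard binomial identity. Write $C = [c_{n,k}]_{n,k\geq 0}$ with $c_{n,k} = \frac{1}{n+1}$ for $0\leq k\leq n$ and $c_{n,k}=0$ otherwise, and note that the $k$th entry of $\vec{b}_m$ equals $\binom{k}{m}$ for every $k\geq 0$ (adopting the convention $\binom{k}{m}=0$ when $k<m$, which matches the leading block of zeros). Then the $n$th entry of $C\vec{b}_m$ is $\frac{1}{n+1}\sum_{k=m}^{n}\binom{k}{m}$ when $n\geq m$, and $0$ when $n<m$. First I would apply the ``hockey stick'' identity $\sum_{k=m}^{n}\binom{k}{m} = \binom{n+1}{m+1}$ (an immediate induction from Pascal's rule), turning the $n$th entry into $\frac{1}{n+1}\binom{n+1}{m+1}$. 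A one-line simplification, $\frac{1}{n+1}\binom{n+1}{m+1} = \frac{n!}{(m+1)!\,(n-m)!} = \frac{1}{m+1}\binom{n}{m}$, identifies this with $\frac{1}{m+1}$ times the $n$th entry of $\vec{b}_m$; the case $n<m$ matches trivially since both sides vanish. This gives $C\vec{b}_m = \frac{1}{m+1}\vec{b}_m$.

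For the one-dimensionality I would show every eigenvector for the eigenvalue $\frac{1}{m+1}$ is a scalar multiple of $\vec{b}_m$. Let $\vec{a}=(a_n)_{n\geq 0}$ satisfy $C\vec{a} = \frac{1}{m+1}\vec{a}$ and put $S_n = a_0 + \cdots + a_n$ with $S_{-1}:=0$. The $n$th coordinate of the eigenvalue equation reads $\frac{1}{n+1}S_n = \frac{1}{m+1}(S_n - S_{n-1})$, which rearranges to the recursion $(n-m)S_n = (n+1)S_{n-1}$. Setting $n=m$ forces $S_{m-1}=0$; substituting this into the recursion at $n=m-1,\,m-2,\,\dots,\,0$ forces $S_{m-1}=S_{m-2}=\cdots=S_0=0$ by downward induction, whence $a_0=\cdots=a_{m-1}=0$ and $a_m = S_m$. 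For $n>m$ the recursion becomes $S_n = \frac{n+1}{n-m}S_{n-1}$, so every $S_n$, and hence every $a_n = S_n - S_{n-1}$, is determined by the single scalar $S_m$. Telescoping the product gives $S_n = S_m\binom{n+1}{m+1}$, and then $a_n = S_m\bigl(\binom{n+1}{m+1}-\binom{n}{m+1}\bigr) = S_m\binom{n}{m}$ by Pascal's rule, i.e. $\vec{a} = S_m\,\vec{b}_m$. Therefore the eigenspace is $\C\,\vec{b}_m$, which is one-dimensional.

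There is no real obstacle here; once the hockey-stick identity is available, everything is elementary. The only point requiring care is the node $n=m$ in the recursion $(n-m)S_n = (n+1)S_{n-1}$, where the coefficient of $S_n$ degenerates: this equation does not determine $S_m$ but instead pins down $S_{m-1}=0$ and, by iteration, the vanishing of the entire initial block, leaving $S_m$ as the free parameter that scales the eigenvector. Keeping the two regimes $n\leq m$ and $n>m$ cleanly separated is the whole of the bookkeeping.
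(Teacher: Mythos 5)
Your proof is correct, and it takes a genuinely different route from the paper's. The paper does not prove the proposition in place: it defers the argument to the next section, where the Ces\`{a}ro operator is realized as the integral operator $(Cf)(z)=\frac{1}{z}\int_0^z\frac{f(\xi)}{1-\xi}\,d\xi$ on $\mathcal{O}(\D)$; there the eigenvalue equation $Cf=\lambda f$ is converted into the first-order ODE $\frac{f(z)}{1-z}=\lambda zf'(z)+\lambda f(z)$, whose solution $f(z)=z^{(1-\lambda)/\lambda}(1-z)^{-1/\lambda}$ is analytic on $\D$ only when $(1-\lambda)/\lambda\in\N_0$. That yields the eigenvalues $\frac{1}{n+1}$, the eigenfunctions $f_n(z)=z^n(1-z)^{-(n+1)}$ whose Taylor coefficients are exactly the entries of $\vec{b}_n$, and one-dimensionality because the ODE has a one-dimensional solution space. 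Your argument instead stays entirely at the level of sequences: the hockey-stick identity $\sum_{k=m}^n\binom{k}{m}=\binom{n+1}{m+1}$ verifies the eigenvector identity, and the recursion $(n-m)S_n=(n+1)S_{n-1}$ in the partial sums, with its degenerate node at $n=m$, pins down the eigenspace as $\C\,\vec{b}_m$. What each approach buys: yours is elementary and self-contained, and it is honest about the ambient space $\mathcal{V}$ of \emph{all} sequences (the identification of $\mathcal{V}$ with $\mathcal{O}(\D)$ via Taylor coefficients is not onto, so the ODE argument strictly speaking only covers sequences with positive radius of convergence, a point the paper glosses over); the paper's route is shorter, exposes the function-theoretic meaning of the eigenvectors, and simultaneously shows that $\frac{1}{m+1}$, $m\in\N_0$, are the \emph{only} eigenvalues. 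If you want that last fact too, your same recursion delivers it in one line: for general $\lambda$ the $n$th coordinate gives $(\lambda(n+1)-1)S_n=\lambda(n+1)S_{n-1}$, and if $\lambda\neq\frac{1}{n+1}$ for every $n$ then $S_0=0$ forces all $S_n=0$.
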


We will outline a proof of this in the next section when we re-imagine the Ces\`{a}ro operator as an integral operator on the space of all analytic functions on $\D$. Next observe that the Ces\`{a}ro matrix takes $\mathcal{V}$, the vector space of all complex sequences, {\em onto} $\mathcal{V}$. 

\begin{Proposition}
The Ces\`{a}ro matrix defines an invertible linear transformation of $\mathcal{V}$ onto $\mathcal{V}$ with
$$C^{-1} = 
\begin{bmatrix}
\phantom{-}1 & \phantom{-} \0 & \phantom{-} \0 & \phantom{-} \0 & \phantom{-} \0 & \cdots\\
-1 & \phantom{-} 2 & \phantom{-} \0 & \phantom{-} \0 &\phantom{-}  \0 &  \cdots\\
\phantom{-} \0 & -2 & \phantom{-} 3 & \phantom{-} \0 & \phantom{-} \0 & \cdots\\
\phantom{-} \0 & \phantom{-} \0 & -3 & \phantom{-} 4 & \phantom{-} \0 & \cdots\\
\phantom{-} \0 & \phantom{-} \0 & \phantom{-} \0 & -4 & \phantom{-} 5 & \cdots\\
\phantom{-} \vdots & \phantom{-} \vdots & \phantom{-} \vdots & \phantom{-} \vdots & \phantom{-} \vdots & \ddots
\end{bmatrix}.
$$
\end{Proposition}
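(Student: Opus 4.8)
The plan is to sidestep any subtlety about multiplying infinite matrices by arguing directly with the linear maps these matrices induce on $\mathcal{V}$, using the telescoping relation between a sequence and its partial sums already visible in \eqref{zppsooIDIIFUUG}. Write $D$ for the bidiagonal matrix displayed in the statement, so that $(D\vec{b})_0 = b_0$ and $(D\vec{b})_n = (n+1)b_n - n b_{n-1}$ for $n \geq 1$. Like $C$, the matrix $D$ is lower triangular with only finitely many nonzero entries in each row, so it induces a genuine linear transformation of $\mathcal{V}$; it therefore suffices to check that $DC$ and $CD$ both act as the identity on $\mathcal{V}$.

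First I would verify $DC = I$. Given $\vec{a} = (a_n)_{n \geq 0} \in \mathcal{V}$, set $\vec{b} = C\vec{a}$, so that $(n+1)b_n = S_n := a_0 + a_1 + \cdots + a_n$. Then $b_0 = S_0 = a_0$, and for $n \geq 1$ one has
$$a_n = S_n - S_{n-1} = (n+1)b_n - n b_{n-1} = (D\vec{b})_n,$$
whence $DC\vec{a} = \vec{a}$; in particular $C$ is injective.

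Next I would verify $CD = I$. Given $\vec{b} = (b_n)_{n \geq 0} \in \mathcal{V}$, set $\vec{a} = D\vec{b}$, so that $a_0 = b_0$ and $a_n = (n+1)b_n - n b_{n-1}$ for $n \geq 1$. A telescoping computation then gives $\sum_{n=0}^{N} a_n = (N+1)b_N$ for every $N \geq 0$, so that $(C\vec{a})_N = \tfrac{1}{N+1}\sum_{n=0}^{N} a_n = b_N$, i.e.\ $CD\vec{b} = \vec{b}$. Combining the two identities shows that $C$ is a bijection of $\mathcal{V}$ onto itself whose inverse is the claimed matrix $D$.

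I do not expect a genuine obstacle here; the only point worth flagging is that infinite matrices cannot in general be composed, so one must either note that $C$ and $D$ are row finite (so that the entrywise products $CD$ and $DC$ are well defined and equal $I$) or, as above, phrase the argument in terms of the induced maps on $\mathcal{V}$ together with the elementary identity $a_n = S_n - S_{n-1}$.
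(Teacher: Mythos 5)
Your argument is correct and is essentially the paper's own: the paper solves the triangular system $\sigma_N = \tfrac{1}{N+1}(S_0+\cdots+S_N)$ for the $S_n$, which is exactly your identity $S_N = (N+1)\sigma_N - N\sigma_{N-1}$, i.e.\ the action of the claimed bidiagonal matrix. Your version is slightly more careful in checking both compositions $DC=I$ and $CD=I$ and in noting that row-finiteness makes the matrix products legitimate, but the underlying computation is the same.
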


One proves this by referring to \eqref{zppsooIDIIFUUG} and solving the infinite triangular system 
\begin{align*}
\sigma_0 & = S_0\\
\sigma_1 & = \tfrac{1}{2} (S_0 + S_1)\\
\sigma_{2} & = \tfrac{1}{3} (S_0 + S_1 + S_2)\\
\sigma_{3} & = \tfrac{1}{4} (S_0 + S_1 + S_2 + S_3)
\end{align*}
and so on, for $S_0, S_1, S_2, \ldots$

Interestingly, the Ces\`{a}ro matrix connects with some other famous matrices. These factorizations where pointed out by Bennett \cite{MR858964, MR1317938} as a way or estimating, and even computing, the norms of various well-known matrices and their generalizations. For example, if $H$ is the classical {\em Hilbert matrix}
$$H = \begin{bmatrix}
1 & \tfrac{1}{2} & \frac{1}{3} & \frac{1}{4} & \cdots  \\[4pt]
\frac{1}{2} & \frac{1}{3} & \frac{1}{4} & \frac{1}{5} & \cdots  \\[4pt]
\frac{1}{3} & \frac{1}{4} & \frac{1}{5} & \frac{1}{6} & \cdots  \\
%\frac{1}{4} & \frac{1}{5} & \frac{1}{6} & \frac{1}{7} & \cdots\\
\vdots & \vdots & \vdots & \vdots & \ddots
\end{bmatrix}, $$
i.e., $H = [H_{jk}]_{j,k \geq 0}$, where 
$$H_{jk} = \frac{1}{j + k + 1},$$ and 
$$B =
\begin{bmatrix}
 \frac{1}{2} & \frac{1}{3} & \frac{1}{4} & \frac{1}{5} & \frac{1}{6} & \frac{1}{7} & \cdots  \\[4pt]
 \frac{1}{6} & \frac{1}{6} & \frac{3}{20} & \frac{2}{15} & \frac{5}{42} & \frac{3}{28} & \cdots  \\[4pt]
 \frac{1}{12} & \frac{1}{10} & \frac{1}{10} & \frac{2}{21} & \frac{5}{56} & \frac{1}{12} & \cdots  \\[4pt]
 \frac{1}{20} & \frac{1}{15} & \frac{1}{14} & \frac{1}{14} & \frac{5}{72} & \frac{1}{15} & \cdots  \\[4pt]
 \frac{1}{30} & \frac{1}{21} & \frac{3}{56} & \frac{1}{18} & \frac{1}{18} & \frac{3}{55} & \cdots  \\[4pt]
 \frac{1}{42} & \frac{1}{28} & \frac{1}{24} & \frac{2}{45} & \frac{1}{22} & \frac{1}{22} & \cdots  \\[4pt]
 \vdots & \vdots & \vdots & \vdots & \vdots & \vdots & \ddots
 \end{bmatrix},$$
 i.e., $B = [B_{jk}]_{j, k \geq 0},$ where
 $$B_{j k} = \frac{k + 1}{(j + k + 1)(j + k + 2)},$$
 one can show \cite{MR858964} that 
 \begin{equation}\label{HCe}
 H = B C
 \end{equation}
 which links the Hilbert matrix with the Ces\`{a}ro matrix.

 If one considers the transpose
$$\begin{bmatrix}
1 & \frac{1}{2} & \frac{1}{3} & \frac{1}{4} & \frac{1}{5} &  \cdots\\[3pt]
\0 & \frac{1}{2} & \frac{1}{3} & \frac{1}{4} & \frac{1}{5} & \cdots\\[3pt]
\0 & \0 & \frac{1}{3} & \frac{1}{4} & \frac{1}{5} & \cdots\\[3pt]
\0 & \0 & \0 & \frac{1}{4} & \frac{1}{5} & \cdots\\[3pt]
\0 & \0 & \0 & \0 & \frac{1}{5} & \cdots\\[-3pt]
\vdots & \vdots & \vdots & \vdots & \vdots & \ddots
\end{bmatrix}$$
of $C$ and defines
$$L := 
\begin{bmatrix}
1 & \tfrac{1}{2} & \tfrac{1}{3} & \tfrac{1}{4} & \cdots\\[5pt]
\tfrac{1}{2} & \tfrac{1}{2} & \tfrac{1}{3} & \tfrac{1}{4} & \cdots\\[5pt]
\tfrac{1}{3} & \tfrac{1}{3} & \tfrac{1}{3} & \tfrac{1}{4}&  \cdots\\[5pt]
\tfrac{1}{4} & \tfrac{1}{4} & \tfrac{1}{4} & \tfrac{1}{4} & \cdots\\
\vdots & \vdots & \vdots & \vdots & \ddots
\end{bmatrix},$$
in other words, $L = [L_{i j}]_{i, j \geq 0}$, with 
$$L_{i j} := \frac{1}{\max\{i, j\} + 1},$$
a computation will show that $L = C C^{*}$. The matrix above is a special case of an {\em $L$-matrix} and was explored recently in \cite{BLM2, BLM1}.

As with any matrix, one can ask about the infinite matrices $T$ for which $T C =  C T$, i.e., the commutant of $C$ as a linear transformation on $\mathcal{V}$. These involve the {\em Hausdorff matrices} which are constructed as follows. Define the matrix 
$$W := 
\begin{bmatrix}
\phantom{-}1 & \phantom{-}\0 &\phantom{-} \0 & \phantom{-}\0 & \phantom{-}\0 & \phantom{-}\0 & \cdots\\
\phantom{-}1 & -1 & \phantom{-}\0 & \phantom{-}\0 &\phantom{-} \0 & \phantom{-}\0 & \cdots\\
\phantom{-}1 & -2  & \phantom{-}1 &\phantom{-} \0 &\phantom{-} \0 & \phantom{-}\0 & \cdots\\
\phantom{-}1 & - 3  &  \phantom{-}3  & -1 &\phantom{-} \0 & \phantom{-}\0 & \cdots\\
\phantom{-}1 & -4  &\phantom{-} 6  & -4  & \phantom{-}1 & \phantom{-}\0 & \cdots\\
\phantom{-}1 & -5  & \phantom{-}10  & -10  & \phantom{-}5  & -1& \cdots\\
\phantom{-}\vdots & \phantom{-}\vdots &\phantom{-} \vdots & \phantom{-}\vdots & \phantom{-}\vdots & \phantom{-}\vdots & \ddots
\end{bmatrix}.$$
Notice how each row consists of the alternating binomial coefficients. A matrix calculation (which is allowed since $W$ is lower triangular) shows that 
$$W^2 = I.$$
Furthermore, and this will be used in \S \ref{commuette}, one can verify the matrix identity \cite{MR1544453}
\begin{equation}\label{Haus}
C = W D W,
\end{equation}
 where $D$ is the diagonal matrix 
$$D := \begin{bmatrix}
1 & \0 & \0 & \0 & \cdots\\[3pt]
\0 & \frac{1}{2} & \0 & \0 & \cdots\\[3pt]
\0 & \0 & \frac{1}{3} & \0 & \cdots\\[3pt]
\0 & \0 & \0 & \frac{1}{4} & \cdots\\
\vdots & \vdots & \vdots & \vdots & \ddots
\end{bmatrix}.$$
If $T C = C T$, then the matrix $T' := W TW$ commutes with $D$ and, since $D$ is a diagonal matrix with distinct entries, one can argue that 
$$T' = \begin{bmatrix}
z_0 & \0 & \0 & \0 & \cdots\\[3pt]
\0 & z_1 & \0 & \0 & \cdots\\[3pt]
\0 & \0 & z_2 & \0 & \cdots\\[3pt]
\0 & \0 & \0 & z_3 & \cdots\\
\vdots & \vdots & \vdots & \vdots & \ddots
\end{bmatrix}$$ for some sequence $(z_n)_{n \geq 0}$ of complex numbers. Thus, 
$$T = W  \begin{bmatrix}
z_0 & \0 & \0 & \0 & \cdots\\[3pt]
\0 & z_1 & \0 & \0 & \cdots\\[3pt]
\0 & \0 & z_2 & \0 & \cdots\\[3pt]
\0 & \0 & \0 & z_3 & \cdots\\
\vdots & \vdots & \vdots & \vdots & \ddots
\end{bmatrix} W.$$ 

This yields the following. 

\begin{Theorem}[Hurwitz--Silverman \cite{MR1501058}]\label{sdlfuuHHJHJHH}
A matrix $T$ commutes with the Ces\`{a}ro matrix $C$ if and only if 
$$T = W  \begin{bmatrix}
z_0 & \0 & \0 & \0 & \cdots\\[3pt]
\0 & z_1 & \0 & \0 & \cdots\\[3pt]
\0 & \0 & z_2 & \0 & \cdots\\[3pt]
\0 & \0 & \0 & z_3 & \cdots\\
\vdots & \vdots & \vdots & \vdots & \ddots
\end{bmatrix} W$$ for some complex sequence  $(z_n)_{n \geq 0}$.
\end{Theorem}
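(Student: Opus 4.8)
The plan is to leverage the two matrix identities already in hand — $W^2 = I$ and the factorization $C = WDW$ of \eqref{Haus} — to convert the commutant of $C$ into the commutant of the diagonal matrix $D$, which is transparent. The observation to set up first is that multiplying \eqref{Haus} on the left and on the right by $W$ and using $W^2 = I$ gives $WCW = D$ (and likewise $CW = WD$, $WC = DW$); equivalently, the (formal) map $T \mapsto WTW$ is an involutive algebra homomorphism carrying $C$ to $D$, so it matches up the two commutants.

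With that in place, the ``if'' direction is a one-line computation: for $T = W \operatorname{diag}(z_n) W$ I would write $TC = (W\operatorname{diag}(z_n)W)(WDW) = W \operatorname{diag}(z_n) D W = W D \operatorname{diag}(z_n) W = CT$, using $W^2 = I$ and the commutativity of the two diagonal matrices. For the ``only if'' direction, assuming $TC = CT$, I would set $T' := WTW$ and compute $T'D = (WTW)(WCW) = W(TC)W = W(CT)W = (WCW)(WTW) = DT'$, so that $T'$ commutes with $D$. It then remains to observe that any matrix commuting with $D = \operatorname{diag}(1, \tfrac12, \tfrac13, \dots)$ is diagonal: comparing $(i,j)$ entries of $T'D$ and $DT'$ gives $T'_{ij}\big(\tfrac{1}{j+1} - \tfrac{1}{i+1}\big) = 0$, and since the diagonal entries of $D$ are pairwise distinct the scalar factor is nonzero when $i \neq j$, forcing $T'_{ij} = 0$. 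Writing $T' = \operatorname{diag}(z_n)$ for some complex sequence $(z_n)_{n \geq 0}$ and conjugating back, $T = WT'W = W\operatorname{diag}(z_n)W$, which is the asserted form.

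The step I expect to require the most care is not any of these algebraic manipulations but the legitimacy of the infinite matrix products involved: for a general infinite matrix $T$ the products $TW$ and $WTW$ are, entrywise, infinite sums and need not converge or associate. The clean way around this — consistent with the remark already made in the text that the calculation ``is allowed since $W$ is lower triangular'' — is to carry out the argument for lower triangular $T$, where every entry of $WT$, $TW$, and $WTW$ is a finite sum, and where the candidate matrix $W\operatorname{diag}(z_n)W$ is automatically lower triangular (being a product of lower triangular matrices). Once the bookkeeping is pinned down in this way, the only substantive ingredient beyond \eqref{Haus} and $W^2 = I$ is the elementary description of the commutant of a diagonal matrix with distinct diagonal entries, so I do not anticipate any real obstacle.
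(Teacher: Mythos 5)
Your proposal is correct and follows essentially the same route as the paper: conjugating by $W$ via $W^2 = I$ and $C = WDW$ to reduce to the commutant of the diagonal matrix $D$ with distinct entries. Your added remark about restricting to lower triangular $T$ so that all matrix products are entrywise finite sums is a sensible precaution that the paper only gestures at, but it does not change the argument.
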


The matrix $T$ above is known as a {\em Hausdorff matrix}. 
%If $k \in \N$ and 
%$$z_0 = 1, \quad z_1 = \frac{1}{2^k}, \quad z_2 = \frac{1}{3^k}, \quad z_3 = \frac{1}{4^k}, \cdots,$$
%then $T = C^k$ and the summability method induced by $T$ is known as the $k$th Ces\`{a}ro mean. 
An interesting class of Hausdorff matrices are the {\em Euler matrices} $E_{\lambda}$, where $\lambda \in \C$ and 
\begin{equation}\label{Euler}
E_{\lambda} :=  W  \begin{bmatrix}
1 & \0 & \0 & \0 & \cdots\\[3pt]
\0 & \lambda & \0 & \0 & \cdots\\[3pt]
\0 & \0 & \lambda^2& \0 & \cdots\\[3pt]
\0 & \0 & \0 & \lambda^3 & \cdots\\
\vdots & \vdots & \vdots & \vdots & \ddots
\end{bmatrix} W.
\end{equation}
We will see these again in \S \ref{commuette}. The above Hausdorff operators have various generalizations that were explored in \cite{MR305108}. 

Before leaving this section, let us connect these Hausdorff matrices with higher order H\"{o}lder and Ces\`{a}ro summability discussed earlier. For an infinite  series  $a_0 + a_1 + a_2 + \cdots$, recall the quantities 
$$H_{N}^{0} := \sum_{j = 0}^{N} a_j,$$
$$H_{N}^{r} := \frac{H_{0}^{r - 1} + H_{1}^{r - 1} + H_{2}^{r - 1} + \cdots + H_{N}^{r - 1}}{N + 1}, \quad r \in \N.$$ 
Also recall that a series $a_0 + a_1 + a_2 + \cdots$ is H\"{o}lder summable or order $r$, written $(H, r)$ summable, to $L$ if
$$\lim_{N \to \infty} H_{N}^{r} = L.$$
The matrix that corresponds to $(H, r)$ summability, as the Ces\`{a}ro matrix corresponded to Ces\`{a}ro summability in \eqref{zppsooIDIIFUUG}, is 
$$W  \begin{bmatrix}
f(1) & \0 & \0 & \0 & \cdots\\[3pt]
\0 & f(\tfrac{1}{2}) & \0 & \0 & \cdots\\[3pt]
\0 & \0 & f(\tfrac{1}{3}) & \0 & \cdots\\[3pt]
\0 & \0 & \0 & f(\tfrac{1}{4}) & \cdots\\
\vdots & \vdots & \vdots & \vdots & \ddots
\end{bmatrix} W,$$
where $f(z) = z^r$.

Similarly, recall the quantities 
$$S_{N}^{0} := a_0 + a_1 + a_2 + \cdots + a_{N},$$
$$S_{N}^{r} := S_{0}^{r - 1} + S_{1}^{r - 1} + S_{2}^{r - 1} + \cdots + S_{N}^{r - 1}, \quad r \in \N.$$
A series $a_0 + a_1 + a_2 + \cdots$ is Ces\`{a}ro summable or order $r$, written $(C, r)$ summable, to $L$ if 
$$\lim_{N \to \infty} \frac{r! S_{N}^{r}}{(N + 1)^r} = L.$$
The matrix that corresponds to $(C, r)$ summability is 
$$W  \begin{bmatrix}
g(1) & \0 & \0 & \0 & \cdots\\[3pt]
\0 & g(\tfrac{1}{2}) & \0 & \0 & \cdots\\[3pt]
\0 & \0 & g(\tfrac{1}{3}) & \0 & \cdots\\[3pt]
\0 & \0 & \0 & g(\tfrac{1}{4}) & \cdots\\
\vdots & \vdots & \vdots & \vdots & \ddots
\end{bmatrix} W,$$
where 
$$g(z) = \frac{r! z^r}{(1 + z) (1 + 2 z) \cdots (1 + (r - 1)z)}.$$

\section{The Ces\`{a}ro operator on the space of analytic functions}

Let $\mathcal{O}(\D)$ denote the vector space of all analytic functions on the open unit disk $\D$. It is well known that $\mathcal{O}(\D)$ is a Fr\'{e}chet space when endowed with the topology arising from uniform convergence on compact subsets of $\D$. 
For $f \in \mathcal{O}(\D)$ define 
\begin{equation}\label{integralformCC}
(C f)(z) = \frac{1}{z} \int_{0}^{z} \frac{f(\xi)}{1 - \xi} d \xi, \quad z \in \D,
\end{equation}
where the integral is along any path from $0$ to $z$ in $\D$. Basic theory of integrals shows that $C f \in \mathcal{O}(\D)$. In fact, we have more. 

\begin{Proposition}
The map $f \mapsto C f$ is a linear isomorphism of $\mathcal{O}(\D)$.
\end{Proposition}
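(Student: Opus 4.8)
The plan is to produce an explicit two-sided inverse for $C$ and to verify that both $C$ and this inverse are continuous for the compact-open topology on $\mathcal{O}(\D)$; linearity is immediate from \eqref{integralformCC}.

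First I would rewrite \eqref{integralformCC} via the substitution $\xi = t z$ as
$$(Cf)(z) = \int_{0}^{1} \frac{f(t z)}{1 - t z}\, dt, \qquad z \in \D.$$
This form has three advantages: it makes $Cf$ visibly analytic on all of $\D$ (so the apparent pole of $1/z$ at the origin is removable, which reproves the stated fact that $Cf \in \mathcal{O}(\D)$, with $(Cf)(0) = f(0)$); it is manifestly linear in $f$; and it yields the estimate $\sup_{|z| \leq r} |(Cf)(z)| \leq (1-r)^{-1} \sup_{|z| \leq r} |f(z)|$ for each $r < 1$, whence, applied to $f_n - f$, the continuity of $C$ on $\mathcal{O}(\D)$.

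Next I would locate the inverse by differentiating. From \eqref{integralformCC}, $z\,(Cf)(z) = \int_{0}^{z} f(\xi)/(1-\xi)\, d\xi$, so the fundamental theorem of calculus gives $\big(z (Cf)(z)\big)' = f(z)/(1-z)$, that is, $f(z) = (1-z)\big((Cf)(z) + z (Cf)'(z)\big)$. This suggests the candidate inverse $\Phi \colon \mathcal{O}(\D) \to \mathcal{O}(\D)$, $(\Phi g)(z) := (1-z)\big(g(z) + z\, g'(z)\big)$, which is a continuous linear self-map of the Fr\'{e}chet space $\mathcal{O}(\D)$ because differentiation and multiplication by a fixed element of $\mathcal{O}(\D)$ are. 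The identity just derived says $\Phi(Cf) = f$ for all $f$; conversely, if $h = \Phi g$ then $h(\xi)/(1-\xi) = (\xi g(\xi))'$, so
$$(C(\Phi g))(z) = \frac{1}{z}\int_{0}^{z} (\xi g(\xi))'\, d\xi = \frac{1}{z}\big[\xi g(\xi)\big]_{0}^{z} = g(z).$$
Therefore $\Phi = C^{-1}$, and since $C$ and $C^{-1}$ are both continuous and linear, $C$ is a linear isomorphism of $\mathcal{O}(\D)$.

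I do not expect a real obstacle. The only delicate point is the behaviour at $z = 0$, handled uniformly by the change of variables $\xi = tz$, together with the standard facts that differentiation and multiplication operators are continuous on $\mathcal{O}(\D)$. As a consistency check one may pass to Taylor coefficients: $C$ carries $\sum_{n\geq 0} a_n z^n$ to $\sum_{n\geq 0} \frac{1}{n+1}\big(a_0 + a_1 + \cdots + a_n\big) z^n$, in agreement with the matrix $C$ acting on $(a_n)_{n\geq 0}$, while $\Phi$ realizes the bidiagonal rule $b_n \mapsto (n+1) b_n - n\, b_{n-1}$ displayed as $C^{-1}$ earlier. The functional identities above are, however, shorter and deliver continuity for free.
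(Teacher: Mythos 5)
Your proof is correct and follows essentially the same route as the paper: both arguments hinge on the explicit inverse $g \mapsto (1-z)\bigl(zg(z)\bigr)' = (1-z)\bigl(g(z) + zg'(z)\bigr)$, which the paper uses to prove surjectivity (with injectivity handled by a one-line differentiation argument) and which you verify as a two-sided inverse. Your additional observations — the substitution $\xi = tz$ to handle the point $z=0$ and the continuity of both $C$ and $C^{-1}$ on the Fr\'{e}chet space $\mathcal{O}(\D)$ — go slightly beyond what the paper records but are correct and harmless.
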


\begin{proof}
The linearity of the map $f \mapsto C f$ comes from the linearity of the integral. This map is injective since 
$$\frac{1}{z} \int_{0}^{z} \frac{f(\xi)}{1 - \xi} d \xi \equiv 0 \implies \int_{0}^{z} \frac{f(\xi)}{1 - \xi} d \xi \equiv 0 \implies \frac{f(z)}{1 - z} \equiv 0 \implies f \equiv 0.$$ 
This map is surjective since if $g \in \mathcal{O}(\D)$, then $f(z) = (1 - z) (z g'(z) + g(z))$ belongs to $\mathcal{O}(\D)$ and one can quickly check that $C f = g$. 
\end{proof}

\begin{Corollary}
The inverse of $C: \mathcal{O}(\D) \to \mathcal{O}(\D)$ is given by 
$$(C^{-1} f)(z) = (1 - z) \frac{d}{dz} (z f(z)).$$
\end{Corollary}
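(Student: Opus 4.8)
The plan is to extract the formula directly from the surjectivity argument already used in the proof of the preceding Proposition. There it was shown that for $g \in \mathcal{O}(\D)$ the function $f(z) = (1-z)\big(z g'(z) + g(z)\big)$ satisfies $C f = g$; since $C$ is a bijection of $\mathcal{O}(\D)$ onto itself, this $f$ is necessarily $C^{-1} g$. Recognizing that $z g'(z) + g(z) = \frac{d}{dz}\big(z g(z)\big)$ by the product rule immediately rewrites $f(z) = (1-z)\frac{d}{dz}\big(z g(z)\big)$, which is exactly the claimed expression for $C^{-1}$.

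Since the statement is short, I would prefer to make it self-contained by verifying both compositions explicitly rather than merely citing the previous proof. Denote by $R$ the map $(R f)(z) := (1-z)\frac{d}{dz}\big(z f(z)\big)$, which clearly sends $\mathcal{O}(\D)$ into $\mathcal{O}(\D)$. First I would check $C R = \mathrm{id}$: for $f \in \mathcal{O}(\D)$,
\[
\big(C (R f)\big)(z) = \frac{1}{z}\int_0^z \frac{(Rf)(\xi)}{1-\xi}\, d\xi = \frac{1}{z}\int_0^z \frac{d}{d\xi}\big(\xi f(\xi)\big)\, d\xi = \frac{1}{z}\big(z f(z)\big) = f(z),
\]
using the fundamental theorem of calculus for analytic functions along a path from $0$ to $z$ in $\D$. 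Next I would check $R C = \mathrm{id}$: if $h = C f$, then by definition $z\, h(z) = \int_0^z \frac{f(\xi)}{1-\xi}\, d\xi$, so differentiating gives $\frac{d}{dz}\big(z h(z)\big) = \frac{f(z)}{1-z}$, hence $(R h)(z) = (1-z)\cdot \frac{f(z)}{1-z} = f(z)$.

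There is essentially no obstacle here; the only point requiring a word of care is the behavior at $z = 0$, where $(Cf)(0)$ must be interpreted as the limit $\lim_{z\to 0}\frac{1}{z}\int_0^z \frac{f(\xi)}{1-\xi}\, d\xi = f(0)$, so that $Cf$ is genuinely analytic on all of $\D$ (this is already implicit in the earlier Proposition). With that convention the two displayed identities hold throughout $\D$, and since $C$ was established to be a linear isomorphism of $\mathcal{O}(\D)$, the relation $C R = R C = \mathrm{id}$ forces $R = C^{-1}$.
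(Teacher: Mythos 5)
Your proposal is correct and follows essentially the same route as the paper: the formula is read off from the surjectivity argument in the preceding Proposition, where $f(z)=(1-z)\big(zg'(z)+g(z)\big)=(1-z)\frac{d}{dz}\big(zg(z)\big)$ was shown to satisfy $Cf=g$. The additional explicit verification of both compositions $CR=\mathrm{id}$ and $RC=\mathrm{id}$ is a correct and harmless elaboration of the same idea.
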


Let us connect this seemingly ``new'' linear transformation on $\mathcal{O}(\D)$, defined by an integral, with the Ces\`{a}ro matrix as a linear transformation on $\mathcal{V}$. We do this by looking at the action of $C$ on the functions $(z^n)_{n \geq 0}$. One can check via integration that 
$$C 1 = -\frac{1}{z} \log(1 - z) = 1+\frac{z}{2}+\frac{z^2}{3}+\frac{z^3}{4} + \cdots,$$
$$C z = - \frac{z + \log(1 - z)}{z} = \frac{z}{2}+\frac{z^2}{3}+\frac{z^3}{4}+\frac{z^4}{5} + \cdots,$$
$$C z^2 = - \Big(1 + \frac{z}{2} + \frac{\log(1 - z)}{z}\Big) =  \frac{z^2}{3}+\frac{z^3}{4}+\frac{z^4}{5}+\frac{z^5}{6} + \cdots,$$
and so on. This says that the matrix representation of $C: \mathcal{O}(\D) \to \mathcal{O}(\D)$ with respect to the functions $(z^n)_{n \geq 0}$ is the Ces\`{a}ro matrix 
$$ \begin{bmatrix}
1 & \0 & \0 & \0 & \0 &  \cdots\\[3pt]
\frac{1}{2} & \frac{1}{2} & \0 & \0 & \0 & \cdots\\[3pt]
\frac{1}{3} & \frac{1}{3} & \frac{1}{3} & \0 & \0 & \cdots\\[3pt]
\frac{1}{4} & \frac{1}{4} & \frac{1}{4} & \frac{1}{4} & \0 & \cdots\\[3pt]
\frac{1}{5} & \frac{1}{5} & \frac{1}{5} & \frac{1}{5} & \frac{1}{5} & \cdots\\[-3pt]
\vdots & \vdots & \vdots & \vdots & \vdots & \ddots
\end{bmatrix}.$$

We have already seen the eigenvalues and eigenvectors of the Ces\`{a}ro matrix from Proposition \ref{EVEV}. Let us see them as functions. To do this, we fix a $\lambda \in \C$ and solve the functional equation $C f = \lambda f$. This yields 
$$\frac{1}{z} \int_{0}^{z} \frac{f(\xi)}{1 - \xi} d \xi = \lambda f(z).$$ Multiplying through by $z$ and then differentiating gives us 
$$\frac{f(z)}{1 - z} = \lambda z  f'(z) + \lambda f(z).$$
This differential equation has the solution  
$$f(z) = z^{\frac{1 - \lambda}{\lambda}} (1 - z)^{-\frac{1}{\lambda}}.$$
Since the function above needs to be analytic on $\D$, it must be the case that the exponent on $z$ must be a nonnegative integer and so
$$\frac{1 - \lambda}{\lambda} \in \N_0.$$
Solving for $\lambda$ produces the eigenvalues 
\begin{equation}\label{6554455TT}
\frac{1}{n + 1}, \quad n \in \N_0
\end{equation}
 and the corresponding eigenfunctions 
 \begin{equation}\label{6554455TTT}
f_{n}(z) = \frac{z^n}{(1 - z)^{n + 1}}.
\end{equation}
Notice how this corresponds, via the identification of  the function $z^m \in \mathcal{O}(\D)$ with the basis vector $\vec{e}_m \in \mathcal{V}$ to the eigenvalues and eigenvectors seen  in \eqref{bmmmmm}. Indeed, 
$$f_{n}(z) = \sum_{k = n}^{\infty} {k \choose n} z^n$$
and the Taylor coefficients of $f_n$ are the entries of the vector $\mathbf{b}_n$ from \eqref{bmmmmm}.

We end this section with a re-examination of \eqref{HCe} which relates the Hilbert and Ces\`{a}ro matrices. A change of variable in \eqref{integralformCC} will show that the integral form of $C$ on $\mathcal{O}(\D)$ can be written as 
\begin{equation}\label{ooJJonnBO}
(C f)(z) = \int_{0}^{1} f(t z) \frac{1}{1 - t z} dt.
\end{equation}
Since the matrix entries of the Hilbert matrix $H$ are
$$H_{jk} = \frac{1}{j + k + 1}, \quad j, k \geq 0,$$
we can use this to define a linear transformation $H$ on $\mathcal{O}(\D)$ by 
$$(H f)(z) = \sum_{n = 0}^{\infty} \Big(\sum_{k = 0}^{\infty} \frac{a_k}{n + k + 1}\Big) z^n, \quad f(z) = \sum_{k = 0}^{\infty} a_k z^k.$$
This can be written as integral form as 
$$(H f)(z) = \int_{0}^{1} f(t) \frac{1}{1 - t z} dt.$$
Notice in the integral representation of $C$ from \eqref{ooJJonnBO} that $f(t z)$ appears in the integrand while $f(t)$ (without the $z$) appears in the integrand for $H$. For the functions 
$$e_{n}(z) = z^n, \quad n \geq 0,$$
observe that 
\begin{equation}\label{88hterraced}
(C e_{n})(z) = z^n (H e_{n})(z).
\end{equation}
This says that one can transform the Hilbert matrix to the Ces\`{a}ro matrix as follows: move the $n$th column of $H$ down $n$ places and fill in the $n$ empty slots with zeros. In an analogous way, one can transform $C$ to $H$.

\section{The Ces\`{a}ro operator is bounded on $\ell^2$}\label{cvgsdhfjgksa111}

The Ces\`{a}ro matrix defines a linear transformation $\vec{a} \mapsto C \vec{a}$ on the vector space $\mathcal{V}$ of all complex sequences $\vec{a} = (a_n)_{n \geq 0}$. In fact, it defines an isomorphism of $\mathcal{V}$ onto itself. Now consider the well known Hilbert space
$$\ell^2 :=  \Big\{\vec{a} = (a_n)_{n \geq 0}: \|\vec{a}\| := \Big(\sum_{n = 0}^{\infty} |a_n|^2\Big)^{\frac{1}{2}} < \infty\Big\}$$ with inner product
$$\langle \vec{a}, \vec{b}\rangle = \sum_{n = 0}^{\infty} a_n \overline{b_n}$$ and corresponding norm $\|\vec{a}\| = \sqrt{\langle \vec{a}, \vec{a}\rangle}$.
Is the linear transformation $\vec{a} \mapsto C \vec{a}$ defined/bounded on $\ell^2$? What is 
$$\|C\| = \sup_{\|\vec{a}\| = 1} \|C \vec{a}\|,$$
the norm of this linear transformation on $\ell^2$?

\begin{Theorem}[Brown--Halmos--Shields \cite{MR187085}]\label{iudsfhghdsw}
The linear transformation $\vec{a} \mapsto C \vec{a}$, denoted simply by $C$, defines a bounded operator on $\ell^2$ with $\|C\| = 2$. 
\end{Theorem}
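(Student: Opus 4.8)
The plan is to establish the two inequalities $\|C\| \le 2$ and $\|C\| \ge 2$ separately. The upper bound is the substantive part and is essentially the classical discrete Hardy inequality; the lower bound is a matter of producing near-extremal sequences.

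For $\|C\| \le 2$, I would first reduce to a nonnegative, finitely supported sequence $\vec{a} = (a_n)_{n \ge 0}$: since $|(C\vec{a})_n| \le (C|\vec{a}|)_n$ for every $n$ and the finitely supported sequences are dense in $\ell^2$, it is enough to prove $\|C\vec{a}\| \le 2\|\vec{a}\|$ for such $\vec{a}$ --- and this simultaneously shows $C\vec{a} \in \ell^2$, so that $C$ is genuinely defined (not merely formally) on $\ell^2$. Setting $b_n := (C\vec{a})_n = \frac{1}{n+1}\sum_{k=0}^{n} a_k$ and using the convention $b_{-1} = 0$, one has the telescoping-friendly identity $(n+1)b_n - n b_{n-1} = a_n$ for all $n \ge 0$. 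Substituting this into $b_n^2 - 2 a_n b_n$ and then applying the arithmetic--geometric mean inequality $2 n b_n b_{n-1} \le n(b_n^2 + b_{n-1}^2)$ gives
\[
b_n^2 - 2 a_n b_n \;\le\; n b_{n-1}^2 - (n+1) b_n^2 .
\]
Summing over $0 \le n \le N$ makes the right-hand side telescope to $-(N+1)b_N^2 \le 0$, so $\sum_{n=0}^{N} b_n^2 \le 2 \sum_{n=0}^{N} a_n b_n$, and Cauchy--Schwarz on this finite sum yields $\big(\sum_{n=0}^{N} b_n^2\big)^{1/2} \le 2\big(\sum_{n=0}^{N} a_n^2\big)^{1/2}$; letting $N \to \infty$ gives $\|C\vec{a}\| \le 2\|\vec{a}\|$. (An alternative is to verify the operator inequality $C + C^{*} - C^{*}C \ge 0$, i.e.\ that $I - C$ is a contraction, and then conclude $\|C\| \le \|I\| + \|I - C\| \le 2$; but proving that inequality costs about as much as the estimate above.)

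For $\|C\| \ge 2$, I would test $C$ against the sequences $\vec{a}^{(N)}$ defined by $a^{(N)}_k = (k+1)^{-1/2}$ for $0 \le k \le N$ and $a^{(N)}_k = 0$ for $k > N$, so $\|\vec{a}^{(N)}\|^2 = \sum_{m=1}^{N+1} m^{-1} \sim \log N$. Using $\sum_{m=1}^{M} m^{-1/2} = 2\sqrt{M} + O(1)$, one finds $(C\vec{a}^{(N)})_n \sim 2(n+1)^{-1/2}$ for $1 \le n \le N$, hence $\|C\vec{a}^{(N)}\|^2 \ge \sum_{n=0}^{N} (C\vec{a}^{(N)})_n^2 \sim 4 \sum_{n=1}^{N} n^{-1} \sim 4 \log N$, and therefore $\|C\vec{a}^{(N)}\| / \|\vec{a}^{(N)}\| \to 2$. (A slightly more delicate but tidier variant uses the one-parameter family $a_k = (k+1)^{-1/2-\varepsilon}$ and lets $\varepsilon \downarrow 0$, comparing $\|C\vec{a}\|^2 \approx 4\,\zeta(1+2\varepsilon) \approx 4\|\vec{a}\|^2$.)

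I expect the main obstacle to be the bookkeeping in the upper bound: one has to be careful with the reductions (nonnegativity, finite support, and the passage $N \to \infty$) so that the division in the Cauchy--Schwarz step is legitimate, and one has to hit on the telescoping identity combined with the right use of AM--GM. The value $2$ is sharp precisely because the AM--GM step is asymptotically tight when $b_n \approx b_{n-1}$, which is also why the slowly varying test sequences above are nearly extremal; the lower-bound asymptotics themselves are routine once the estimate $\sum_{m=1}^{M} m^{-1/2} = 2\sqrt{M} + O(1)$ is in hand.
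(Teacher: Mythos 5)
Your proof is correct, but it takes a genuinely different route from the paper on the substantive half, the upper bound. The paper does not prove Hardy's inequality at all: it cites it (with constant $4$) only to get boundedness, and then obtains the sharp constant by the exact matrix identity $(I-C)(I-C)^{*} = \operatorname{diag}\bigl(\tfrac{n}{n+1}\bigr)_{n\ge 0}$, whence $\|I-C\|^2 = \|(I-C)(I-C)^{*}\| = 1$ and $\|C\| \le \|I\| + \|I-C\| = 2$. You instead prove the sharp discrete Hardy inequality from scratch via the telescoping identity $(n+1)b_n - nb_{n-1} = a_n$ together with AM--GM and Cauchy--Schwarz; your algebra checks out (the summed right-hand side does collapse to $-(N+1)b_N^2$, and the reduction to nonnegative finitely supported sequences is legitimate since each coordinate of $C\vec{a}$ is a finite sum and hence continuous in $\vec{a}$). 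What each buys: your argument is self-contained and elementary, and delivers boundedness and the bound $\|C\|\le 2$ in one stroke; the paper's argument is shorter and structurally more informative --- the identity for $(I-C)(I-C)^{*}$ is reused later for the spectrum (it shows $\sigma(C)\subset\{z:|z-1|\le 1\}$), which your estimate does not give. You correctly flag this alternative yourself when you note that $\|I-C\|\le 1$ is equivalent to $C+C^{*}-C^{*}C\ge 0$; in fact the paper's diagonal computation makes that inequality essentially free rather than ``costing about as much.'' For the lower bound the two arguments are morally the same: the paper applies $C^{*}$ to $\bigl((n+1)^{-a}\bigr)_{n\ge0}$ and lets $a\downarrow\tfrac12$, while you apply $C$ to truncations of $\bigl((n+1)^{-1/2}\bigr)_{n\ge0}$ and let the truncation length grow; your asymptotics ($\|\vec{a}^{(N)}\|^2\sim\log N$, $\|C\vec{a}^{(N)}\|^2\sim 4\log N$) are right.
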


\begin{proof}
Let us outline the original proof from \cite{MR187085}.
The boundedness of $C$ comes from Hardy's inequality \cite{MR1544414} (see also \cite{MR944909}): 
\begin{equation}\label{098787uiujh}
\sum_{n = 1}^{\infty} \Big|\frac{b_0 + b_1 + b_2 + \cdots + b_{n}}{N + 1}\Big|^2 \leq 4 \sum_{n = 1}^{\infty} |b_n|^2.
\end{equation} 
One can also get the boundedness of $C$ from Schur's theorem.
This shows that $C$ has a meaningful (bounded) Hilbert space adjoint 
\begin{equation}\label{098787uiujh}
\begin{bmatrix}
1 & \0 & \0 & \0 & \0 &  \cdots\\[3pt]
\frac{1}{2} & \frac{1}{2} & \0 & \0 & \0 & \cdots\\[3pt]
\frac{1}{3} & \frac{1}{3} & \frac{1}{3} & \0 & \0 & \cdots\\[3pt]
\frac{1}{4} & \frac{1}{4} & \frac{1}{4} & \frac{1}{4} & \0 & \cdots\\[3pt]
\frac{1}{5} & \frac{1}{5} & \frac{1}{5} & \frac{1}{5} & \frac{1}{5} & \cdots\\[-3pt]
\vdots & \vdots & \vdots & \vdots & \vdots & \ddots
\end{bmatrix}^{*} = \begin{bmatrix}
1 & \frac{1}{2} & \frac{1}{3} & \frac{1}{4} & \frac{1}{5} &  \cdots\\[3pt]
\0 & \frac{1}{2} & \frac{1}{3} & \frac{1}{4} & \frac{1}{5} & \cdots\\[3pt]
\0 & \0 & \frac{1}{3} & \frac{1}{4} & \frac{1}{5} & \cdots\\[3pt]
\0 & \0 & \0 & \frac{1}{4} & \frac{1}{5} & \cdots\\[3pt]
\0 & \0 & \0 & \0 & \frac{1}{5} & \cdots\\[-3pt]
\vdots & \vdots & \vdots & \vdots & \vdots & \ddots
\end{bmatrix}.
\end{equation}

The computation of $\|C\|$ goes as follows. A matrix calculation shows that 
$$(I - C)(I - C)^{*} = 
\begin{bmatrix}
0 & \0 & \0 &  \0 & \cdots\\[3pt]
\0 & \frac{1}{2} & \0 & \0 & \cdots\\[3pt]
\0 & \0 & \frac{2}{3} & \0 & \cdots\\[3pt]
\0 & \0 & \0 & \frac{3}{4} & \cdots\\[3pt]
\vdots & \vdots & \vdots & \vdots & \ddots\\
\end{bmatrix}.$$
Now use the $C^{*}$-algebra identity for bounded operators (i.e., $\|A^{*} A\| = \|A\|^2$) on Hilbert space to obtain
\begin{equation}\label{ooKOKooKOJJoo}
 \|I - C\|^2 = \|(I - C) (I - C)^{*}\|  = \sup_{n \geq 0} \frac{n}{n + 1} = 1.
 \end{equation}
Note the use of the fact that the norm of a diagonal operator is the supremum of the absolute values of its entries. 
The triangle inequality now shows that 
$$\|C\| = \|I - (I - C)\| \leq \|I\| + \|I - C\| = 1 + 1 = 2.$$

To prove that $\|C\| = 2$, it is enough to prove that $\|C^{*}\| = 2$. Certainly $\|C^{*}\| = \|C\| \leq 2$. One can argue, as was done in \cite{MR187085}, that for each $a > \tfrac{1}{2}$ we have
$$\|C^{*} (\tfrac{1}{(n + 1)^a})_{n \geq 0}\| \geq \frac{1}{a} \|(\tfrac{1}{(n + 1)^a})_{n \geq 0}\|.$$
Letting $a \to \tfrac{1}{2}$ proves the result. 
\end{proof}

Though the focus of this paper will be on the Ces\`{a}ro operator in the Hilbert space setting of $\ell^2$ (and an associated space of analytic functions -- see below), we would be remiss if we did not make a few remarks about an extension of the previous discussion to the more general setting of 
$$\ell^{p} :=  \Big\{\vec{a} = (a_n)_{n \geq 0}: \|\vec{a}\|_{p} := \Big(\sum_{n = 0}^{\infty} |a_n|^p\Big)^{\frac{1}{p}} < \infty\Big\}, \quad 1 < p < \infty.$$
The boundedness of $C$ on $\ell^p$ for $1 < p < \infty$ comes from another version of Hardy's inequality: 
$$\sum_{n = 0}^{\infty} \Big|\frac{b_0 + b_1 + b_2 + \cdots + b_{n}}{n + 1}\Big|^p \leq q^p \sum_{n = 0}^{\infty} |b_n|^p.$$
In the above, $q = p/(p - 1)$ is the conjugate index to $p$. 
Furthermore, the constant $q^p$ above  is optimal \cite{MR944909} and thus, 
$$\|C\|_{\ell^p \to \ell^p} = q.$$
There are results concerning the pesky cases $p = 1$ and $p = \infty$ \cite{MR806429}.

As a bit of a diversion, let us unpack the fact that 
$$\|I - C\|_{\ell^2 \to \ell^2} = 1$$ used in the proof of Theorem \ref{iudsfhghdsw}. What is $\|I - C\|_{\ell^p \to \ell^p}$? This question was posed in \cite{MR1317938} and answered recently in \cite{MR4366190} with the following:
$$\|I - C\|_{\ell^p \to \ell^p} = 
\begin{cases}
{\displaystyle \frac{1}{p - 1}} & \mbox{if $1 < p \leq 2$},\\[10pt]
{\displaystyle m_{p}^{-\frac{1}{p}}} & \mbox{if $2 < p < \infty$},
\end{cases}
$$
where $m_{p} = \min\{p t^{p - 1} + (1 - t)^p - t^p: 0 \leq t \leq \frac{1}{2}\}$.

\section{The spectral properties of the Ces\`{a}ro operator}\label{spectrusms8}

The spectrum of $C$ on $\ell^2$, denoted by $\sigma(C)$, is the set of $\lambda \in \C$ such that $C - \lambda I$ is not invertible in the bounded operators on $\ell^2$. Recall that $\sigma(C)$ is a nonempty compact subset of $\C$ and that $\sigma(C^{*}) = \{\overline{\lambda}: \lambda \in \sigma(C)\}$. The point spectrum, denoted by $\sigma_{p}(C)$, is the set of eigenvalues of $C$.

\begin{Theorem}[Brown--Halmos--Shields \cite{MR187085}]\label{77uuuh77UUU}
The following hold for the Ces\`{a}ro operator on $\ell^2$.
\begin{enumerate}
\item[(i)]$\sigma_{p}(C) = \varnothing$.
\item[(ii)] $\sigma_{p}(C^{*}) = \{z: |z - 1| < 1\}$
\item[(iii)] $\sigma(C) = \{z: |z - 1| \leq 1\}$.
\end{enumerate}
\end{Theorem}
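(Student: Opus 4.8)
The plan is to establish the three parts in the order (i), (ii), (iii), since the spectrum will fall out of the point spectrum of the adjoint together with the norm estimate $\|I-C\|=1$ already proved in Theorem~\ref{iudsfhghdsw}.

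\textbf{Part (i): $\sigma_p(C)=\varnothing$.} Suppose $C\vec a=\lambda\vec a$ for some $\vec a=(a_n)_{n\ge0}\in\ell^2$, $\vec a\ne0$. Reading off the rows of the Ces\`aro matrix gives $\frac{1}{n+1}(a_0+\cdots+a_n)=\lambda a_n$ for all $n$. The $n=0$ equation forces $a_0=\lambda a_0$, so either $\lambda=1$ or $a_0=0$. Set $S_n=a_0+\cdots+a_n$; then $S_n=(n+1)\lambda a_n=(n+1)\lambda(S_n-S_{n-1})$, which rearranges to a first-order recursion $S_n=\frac{(n+1)\lambda}{(n+1)\lambda-1}S_{n-1}$. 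Iterating, $S_n$ is a nonzero multiple of $\prod_{k=1}^{n}\frac{(k+1)\lambda}{(k+1)\lambda-1}$, and $a_n=\frac{S_n}{(n+1)\lambda}$. A Stirling/ratio estimate shows $|a_n|\sim c\,n^{\frac{1}{\lambda}-2}$ (this is exactly the function-theoretic eigenfunction $z^{(1-\lambda)/\lambda}(1-z)^{-1/\lambda}$ found in \S4, whose Taylor coefficients grow like $n^{1/\lambda-1}$, divided by the extra $n$ from the averaging), so $(a_n)\in\ell^2$ would require $\mathrm{Re}(1/\lambda)<3/2$; but more decisively, for the putative eigenvector to be a nonzero analytic function on $\D$ one needs $(1-\lambda)/\lambda\in\N_0$, giving only $\lambda=1/(m+1)$, and for each such $\lambda$ the coefficients ${k\choose m}$ (times the shift) do not lie in $\ell^2$. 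Hence no $\ell^2$ eigenvector exists. I would present this via the closed form for the eigenfunction from \S4 and a coefficient growth estimate.

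\textbf{Part (ii): $\sigma_p(C^*)=\{z:|z-1|<1\}$.} Here I solve $C^*\vec x=\mu\vec x$ directly. Using the matrix form of $C^*$ from \eqref{098787uiujh}, the $n$th row reads $\sum_{k\ge n}\frac{x_k}{k+1}=\mu x_n$. Subtracting consecutive equations, $\mu x_n-\mu x_{n+1}=\frac{x_n}{n+1}$, i.e. $x_{n+1}=\frac{(n+1)\mu-1}{(n+1)\mu}x_n=\bigl(1-\tfrac{1}{(n+1)\mu}\bigr)x_n$. Thus $x_n=x_0\prod_{k=1}^{n}\bigl(1-\tfrac{1}{k\mu}\bigr)$ (after reindexing), and a standard estimate of this product gives $|x_n|\asymp n^{-\mathrm{Re}(1/\mu)}$ up to a slowly varying factor. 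This sequence is in $\ell^2$ exactly when $2\,\mathrm{Re}(1/\mu)>1$, i.e. $\mathrm{Re}(1/\mu)>1/2$, which is precisely the disk $|\mu-1|<1$ (since $\mathrm{Re}(1/\mu)>1/2\iff |\mu-1|<|\mu| \cdot$\ldots — the cleaner route: $\mathrm{Re}(w)>\tfrac12\iff |w-1|<1$ with $w=1/\mu$, and the map $w\mapsto1/w$ carries the half-plane $\mathrm{Re}(w)>1/2$ onto the disk $|\mu-1|<1$). One must also check the summability condition $\sum\frac{x_k}{k+1}$ converges so that the original eigenvalue equation, not just its differenced form, holds — this is automatic from the decay rate when $\mathrm{Re}(1/\mu)>1/2$. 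The boundary and exterior cases give non-$\ell^2$ sequences, so equality of sets holds.

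\textbf{Part (iii): $\sigma(C)=\{z:|z-1|\le1\}$.} For the inclusion $\supseteq$: the closed disk $\overline{\{|z-1|\le1\}}$ is the closure of $\sigma_p(C^*)$, and every point of $\sigma_p(C^*)$ lies in $\sigma(C^*)$, hence $\overline{\{|z-1|\le1\}}\subseteq\sigma(C^*)$ since the spectrum is closed; then $\sigma(C)=\{\overline\lambda:\lambda\in\sigma(C^*)\}$ and the disk is symmetric under conjugation, giving $\{|z-1|\le1\}\subseteq\sigma(C)$. For $\subseteq$: from $\|I-C\|=1$ (equation \eqref{ooKOKooKOJJoo}) the spectral radius bound gives $\sigma(I-C)\subseteq\{|w|\le1\}$, hence $\sigma(C)\subseteq\{z:|z-1|\le1\}$. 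Combining the two inclusions finishes the proof.

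\textbf{Main obstacle.} The routine-looking but genuinely delicate step is the asymptotic analysis of the products $\prod_{k=1}^n(1-\tfrac{1}{k\mu})$ (and its analogue in Part (i)): one needs $|x_n|\asymp n^{-\mathrm{Re}(1/\mu)}$ with enough uniformity to decide $\ell^2$-membership exactly on the boundary circle $|\mu-1|=1$, and to verify that the tail sum defining the eigenvalue equation converges. This is where the dichotomy between strict interior and boundary of the disk is decided, so the estimate must be done carefully (e.g. by writing $\log x_n=\sum_{k=1}^n\log(1-\tfrac{1}{k\mu})=-\tfrac{1}{\mu}\sum_{k=1}^n\tfrac1k+O(1)$ and invoking $\sum_{k\le n}\tfrac1k=\log n+O(1)$), rather than waved through.
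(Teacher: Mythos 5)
Your argument is correct and reaches all three conclusions, but part (ii) takes a genuinely different (and more elementary) route than the paper. The paper identifies $C^{*}$ with the integral operator $f \mapsto \frac{1}{1-z}\int_{z}^{1} f(\xi)\,d\xi$ on $H^2$, solves the resulting differential equation to obtain the eigenfunctions $(1-z)^{(1-\lambda)/\lambda}$, and reads off $H^2$-membership from the coefficient asymptotics of $(1-z)^{s}$; you instead difference consecutive rows of the matrix of $C^{*}$ to get the first-order recursion $x_{n+1} = \bigl(1 - \tfrac{1}{(n+1)\mu}\bigr)x_n$ and estimate the product via $\log x_n = -\tfrac{1}{\mu}\log n + O(1)$. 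These are two faces of the same computation (your $x_n$ are exactly the Taylor coefficients of the paper's eigenfunction), and your version is essentially the original sequence-space argument of Brown--Halmos--Shields, which the paper mentions but does not reproduce; what the paper's function-theoretic route buys is that the eigenfunctions $\phi_w$ it produces are reused verbatim in the Kriete--Trutt subnormality construction of \S\ref{KT}. Your parts (i) and (iii) coincide with the paper's: (i) via the eigenfunction $z^{(1-\lambda)/\lambda}(1-z)^{-1/\lambda}$ from \S4 and the requirement $(1-\lambda)/\lambda \in \N_0$ for analyticity at the origin, and (iii) via $\|I-C\|=1$ together with $\sigma_p(C^{*}) \subset \sigma(C^{*})$ and closedness of the spectrum. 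One small slip in part (i): the coefficients of the eigenfunction grow like $n^{\Re(1/\lambda)-1}$, not $n^{\Re(1/\lambda)-2}$ --- there is no extra division by $n$, since the putative eigenvector \emph{is} the coefficient sequence of the eigenfunction --- so your provisional threshold $\Re(1/\lambda) < 3/2$ should be $\Re(1/\lambda) < 1/2$. This is harmless because, as you note, the decisive constraint is $(1-\lambda)/\lambda \in \N_0$, under which the coefficients $\binom{k}{m}$ grow polynomially and are never square-summable. You also correctly flag the one step that must not be waved through: recovering the undifferenced eigenvalue equation from the recursion requires $x_N \to 0$, which holds exactly on the open disk.
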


There is a direct proof of this theorem, using only functional analysis and sequence spaces, in the Brown--Halmos--Shields paper. An alternative proof involves function theory on the disk. To do this, we introduce the {\em Hardy space} 
$$H^2 := \Big\{f(z) = \sum_{n = 0}^{\infty} a_n z^n: (a_n)_{n \geq 0} \in \ell^2\Big\}.$$
Routine estimates show that every such power series $f \in H^2$ has a radius of convergence at least one and thus defines an analytic function on $\D$. The Hardy space is a Hilbert space of analytic functions on $\D$ with the norm and inner product it naturally inherits by identifying a power series $f(z) = \sum_{n = 0}^{\infty} a_n z^n$ in $H^2$ with its Taylor coefficients $(a_n)_{n \geq 0}$ in $\ell^2$. In other words,
$$\|f\| = \Big(\sum_{n = 0}^{\infty} |a_n|^2\Big)^{\tfrac{1}{2}} \quad \mbox{and} \quad \langle f, g\rangle = \sum_{n = 0}^{\infty} a_n \overline{b_n},$$
where $(b_n)_{n \geq 0}$ is the sequence of Taylor coefficients of $g \in H^2$. From \eqref{integralformCC} there is the integral form of the Ces\`{a}ro operator via 
$$(C f)(z) = \frac{1}{z} \int_{0}^{z} \frac{f(\xi)}{1 - \xi} d\xi.$$
This shows that if $f(z) = \sum_{n = 0}^{\infty} a_n z^n$ belongs to $H^2$, then 
$$(C f)(z) = \sum_{n = 0}^{\infty} \Big(\frac{1}{n + 1} \sum_{j = 0}^{n} a_j\Big) z^n.$$ By Hardy's inequality from \eqref{098787uiujh}, the Taylor coefficients of $C f$ belong to $\ell^2$ and thus $C f \in H^2$. We also see that the  linear transformation  $f \mapsto C f$, denoted simply by $C$, defines a bounded operator on $H^2$ with $\|C\| = 2$. Essentially, we are recasting what we know about the Ces\`{a}ro operator on $\ell^2$, written in terms of sequences, as in operator on a Hilbert space of analytic functions on $\D$, written in terms of Taylor coefficients. 

What we gain from changing viewpoints is an enlightening  way of proving Theorem \ref{77uuuh77UUU}. For example,  we can show that $\sigma_{p}(C) = \varnothing$. Indeed, if $f \in H^2$ and $C f = \lambda f$ for some $\lambda \in \C$, then, as discovered in \eqref{6554455TT} and \eqref{6554455TTT}, 
$$\lambda = \frac{1}{n + 1} \quad \mbox{and} \quad f(z) = k \frac{z^n}{(1 - z)^{n + 1}}.$$
For any $n \in \N_0$, one can check that 
$$\frac{1}{(1 - z)^{n + 1}} = 1 + (n + 1) z + \tfrac{1}{2} (n + 2) (n + 1) z^2 + \tfrac{1}{6} (n + 3) (n + 2) (n + 1) z^3 +\cdots $$ and so the sequence of Taylor coefficients for $f$ does not belong to $\ell^2$. Thus, the only way for $f$ to belong to $H^2$ is for $k = 0$. In other words, the Ces\`{a}ro operator on $\ell^2$ has no eigenvalues. 

Continuing with our identification of the Ces\`{a}ro operator as a matrix operator on $\ell^2$ with an integral operator on $H^2$, one can discover from a matrix calculation using the adjoint formula from \eqref{098787uiujh} that 
\begin{equation}\label{bcvvCBCBCcCC}
(C^{*} f)(z) = \sum_{n = 0}^{\infty} \Big(\sum_{j = n}^{\infty} \frac{a_j}{j + 1}\Big) z^n = \frac{1}{1 - z} \int_{z}^{1} f(\xi)d\xi.
\end{equation}
Thus, if we want to compute the eigenvalues of $C^{*}$, we need to find the $\lambda \in \C$ such that $C^{*}f = \lambda f$ for some $f \in H^{2}\setminus \{0\}$. 
This leads us to the integral equation 
$$\frac{1}{1 - z} \int_{z}^{1} f(\xi) d \xi= \lambda f(z)$$ which, in a similar way as before, has solutions 
\begin{equation}\label{KnnNHHcV66}
f(z) = k (1 - z)^{\frac{1 - \lambda}{\lambda}}.
\end{equation}
These functions will belong to $H^2$, i.e., have Taylor coefficients which belong to $\ell^2$, precisely when 
$$\Re\Big( \frac{1 - \lambda}{\lambda}\Big) > -\tfrac{1}{2}.$$
A little algebra will show that 
$$\Re\Big( \frac{1 - \lambda}{\lambda}\Big) > -\tfrac{1}{2} \iff |1 - \lambda| < 1.$$
Thus $\sigma_{p}(C^{*}) = \{z: |z - 1| < 1\}$ as advertised in Theorem \ref{77uuuh77UUU}.

From \eqref{ooKOKooKOJJoo} it follows that $\sigma(I -C) \subset \{z:|z| \leq 1\}$ and thus $\sigma(C) \subset \{z: |z - 1| \leq 1\}$. Hence, since $\{z: |z - 1| < 1\} = \sigma_{p}(C^{*}) \subset \sigma(C^{*}) = \{\overline{w}: w \in \sigma(C)\}$, we obtain the containment 
$$\{z: |z - 1| < 1\} \subset \sigma(C) \subset \{z: |z - 1| \leq 1\}.$$  Equality follows by taking closures. This proves Theorem \ref{77uuuh77UUU}.

The analog of Theorem \ref{77uuuh77UUU} for the Ces\`{a}ro operator on $\ell^p$ is the following. 

\begin{Theorem}
The spectrum of the Ces\`{a}ro operator on $\ell^p$ for $1 < p < \infty$ is the closed disk
$\big\{z: |z - \tfrac{q}{2}| \leq \tfrac{q}{2}\big\}.$
\end{Theorem}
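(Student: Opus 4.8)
The plan is to follow the same route as the proof of Theorem~\ref{77uuuh77UUU} for $\ell^2$, working with $\ell^p$ realized, via $f(z) = \sum_{n \geq 0} a_n z^n$, as a space of analytic functions on $\D$ (so that the integral formula \eqref{integralformCC} for $C$ still makes sense, with the norm being that of the coefficient sequence in $\ell^p$), and tracking how the exponent $\tfrac12$ gets systematically replaced by $\tfrac1p$; write $q = p/(p-1)$. First, $\sigma_p(C) = \varnothing$ for every $p < \infty$: the eigenfunction computation in \S\ref{spectrusms8} does not see the ambient space, so any solution of $Cf = \lambda f$ is a scalar multiple of $z^n/(1-z)^{n+1}$ with $\lambda = 1/(n+1)$ for some $n \in \N_0$, and the Taylor coefficients $\binom{k}{n}$ of this function grow polynomially in $k$, hence lie in no $\ell^p$ with $p < \infty$.

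Next I would show that the open disk $\{\lambda : |\lambda - \tfrac q2| < \tfrac q2\}$ lies in $\sigma(C)$ by computing the point spectrum of the Banach-space adjoint $C'$, an operator on $(\ell^p)^{*} = \ell^q$. Since $\sigma(C) = \sigma(C')$ and $C'$ is the transposed matrix $C^{T}$, whose action on power series is the integral operator $f \mapsto \tfrac{1}{1-z}\int_{z}^{1} f$ recorded in \eqref{bcvvCBCBCcCC}, solving $C^{T} f = \lambda f$ as in \S\ref{spectrusms8} gives $f(z) = k(1-z)^{(1-\lambda)/\lambda}$. The $k$th Taylor coefficient of $(1-z)^{s}$ decays like $k^{-\Re s - 1}$, so the coefficient sequence of $f$ lies in $\ell^q$ exactly when $q(\Re s + 1) > 1$, i.e.\ $\Re\!\big(\tfrac{1-\lambda}{\lambda}\big) > -\tfrac1p$; writing $\lambda = x + iy$, this reads $\Re(1/\lambda) = x/(x^2+y^2) > 1/q$, which rearranges to $(x - \tfrac q2)^2 + y^2 < (\tfrac q2)^2$. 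Hence $\{\lambda : |\lambda - \tfrac q2| < \tfrac q2\} \subseteq \sigma_p(C') \subseteq \sigma(C') = \sigma(C)$. For $p = 2$ this recovers $\sigma_p(C^{*}) = \{|z-1|<1\}$ from Theorem~\ref{77uuuh77UUU}.

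For the reverse inclusion $\sigma(C) \subseteq \{\lambda : |\lambda - \tfrac q2| \leq \tfrac q2\}$, the Hilbert-space shortcut (the norm identity $\|I - C\|_{\ell^2 \to \ell^2} = 1$ from \eqref{ooKOKooKOJJoo}) is unavailable, so I would compute the resolvent directly. For $\lambda \neq 0, 1/(n+1)$, multiplying the equation $(C - \lambda I)f = g$ through by $z$ (via \eqref{integralformCC}) and differentiating turns it into the first-order linear ODE
\begin{equation*}
-\lambda z f'(z) + \Big(\frac{1}{1-z} - \lambda\Big) f(z) = \big(z g(z)\big)'.
\end{equation*}
The homogeneous solution $z^{1/\lambda - 1}(1-z)^{-1/\lambda}$ is analytic at $0$ only when $\lambda$ is an eigenvalue and so is discarded; solving with the integrating factor $z^{1 - 1/\lambda}(1-z)^{1/\lambda}$, integrating from $0$, and integrating once by parts yields
\begin{equation*}
(C - \lambda I)^{-1} g \;=\; -\frac1\lambda\, g \;-\; \frac{1}{\lambda^{2}}\, z^{1/\lambda - 1}(1-z)^{-1/\lambda}\!\int_{0}^{z}\! \xi^{-1/\lambda}(1-\xi)^{1/\lambda - 1} g(\xi)\, d\xi .
\end{equation*}
One then checks that this operator is bounded on the power-series realization of $\ell^p$ precisely when $\lambda$ lies outside the closed disk, i.e.\ when $\Re(1/\lambda) < 1/q$: in that range $1 - 1/\lambda$ has positive real part, so the boundary terms at $0$ vanish and the integral is well defined, and the resulting weighted (``generalized Ces\`{a}ro'') integral operator has the structure of those studied in \S\ref{g}, its boundedness in exactly this parameter range following from a weighted Schur/H\"{o}lder estimate on the kernel. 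Combining with the previous step and taking closures gives $\sigma(C) = \{\lambda : |\lambda - \tfrac q2| \leq \tfrac q2\}$.

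The main obstacle is this last inclusion. Unlike the $\ell^2$ case there is no one-line norm identity, and one must estimate the explicit resolvent integral operator --- the precise geometry of the disk (its boundary passing through $0$ and $q$, rather than through $0$ and some other point) emerges only from the exponent bookkeeping $\Re(1/\lambda)$ versus $1/q$. An alternative for the companion statement on the classical Hardy space $H^p$ is Siskakis's realization of the Ces\`{a}ro operator in terms of the $C_0$-semigroup of composition operators of \S\ref{CCO}, after which the spectrum follows from the spectral mapping theorem for semigroups. When $p = 2$ the whole argument collapses to the proof of Theorem~\ref{77uuuh77UUU}.
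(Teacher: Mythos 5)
First, a point of reference: the paper states this theorem without any proof or argument (it appears only as ``the analog of Theorem \ref{77uuuh77UUU} for $\ell^p$''), so there is no in-paper proof to measure your proposal against; I can only assess it on its own terms.

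Your architecture is the right one and the bookkeeping is correct: the absence of eigenvalues, the identification of $\sigma_{p}$ of the Banach adjoint $C^{T}$ on $\ell^q$ with the open disk via the coefficient asymptotics $\binom{s}{k} \sim k^{-\Re s - 1}/\Gamma(-s)$ for $(1-z)^{s}$, the translation $\Re(1/\lambda) > 1/q \iff |\lambda - \tfrac{q}{2}| < \tfrac{q}{2}$, and the explicit resolvent obtained from the integrating factor $z^{1-1/\lambda}(1-z)^{1/\lambda}$ all check out. The genuine gap is the step you yourself flag as ``the main obstacle'': the boundedness on $\ell^p$ of the resolvent for $\Re(1/\lambda) < 1/q$ is asserted, not proved, and the way you propose to prove it does not quite work as stated. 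For $p \neq 2$ the ``power-series realization of $\ell^p$'' is \emph{not} the Hardy space $H^p$, so the integral-operator machinery of \S\ref{g} (which lives on Hardy and Bergman spaces with integral-mean norms) gives no information about the coefficient-$\ell^p$ norm; invoking ``the structure of the operators studied in \S\ref{g}'' is a conflation of two different scales of spaces. The estimate has to be done at the matrix level: $(\lambda I - C)^{-1}$ is lower triangular with entries behaving like $\lambda^{-2}\, n^{\Re(1/\lambda)-1} k^{-\Re(1/\lambda)}$ for $n \geq k$ (coming from the products $\prod_j (1 - \tfrac{1}{\lambda(j+1)})^{-1}$), and one then applies a Schur test with power weights $k^{t}$, which succeeds precisely when $\Re(1/\lambda) < 1/q$. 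That computation is where essentially all of the work in this theorem lives, and it is the one piece your outline omits. (Your alternative remark about Siskakis's semigroup method applies to $H^p$, not to $\ell^p$, so it does not rescue the missing step either.)
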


\section{The Ces\`{a}ro operator is hyponormal}\label{HN}

A bounded operator $A$ on a Hilbert space $\mathcal{H}$ is {\em hyponormal} if $A^{*}A - A A^{*} \geq 0$, meaning that 
$\langle (A^{*} A - A A^{*}) \vec{x}, \vec{x}\rangle \geq 0$ for all $\vec{x} \in \mathcal{H}$. The Brown, Halmos, Shields paper \cite{MR187085} contains a proof that the Cesaro operator $C$ on $\ell^2$ is hyponormal. Their proof relies on a classical  result of Sylvester concerning positivity of square matrices and is somewhat technical. Here is a simpler proof by Rhaly which connects to the interesting topic of  {\em posinormal operators}  \cite{MR1291108}. 

\begin{Proposition}
The Ces\`{a}ro operator on $\ell^2$ is hyponormal.
\end{Proposition}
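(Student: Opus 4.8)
The plan is to follow Rhaly's route through the notion of a \emph{posinormal} operator. Recall that a bounded operator $T$ on a Hilbert space is posinormal if there is a positive bounded operator $P$ — the \emph{interrupter} — with $T T^{*} = T^{*} P T$. The elementary point to exploit is that if such a $P$ can be chosen with $P \leq I$, then $T$ is automatically hyponormal, because then $T T^{*} = T^{*} P T \leq T^{*} I T = T^{*} T$. So everything reduces to exhibiting, for the Ces\`{a}ro operator $C$, an interrupter with norm at most $1$.

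First I would write down the candidate interrupter, namely the \emph{diagonal} operator
$$P := \operatorname{diag}\Big(\tfrac{1}{2}, \tfrac{2}{3}, \tfrac{3}{4}, \ldots\Big), \qquad P \vec{e}_{n} = \frac{n+1}{n+2}\, \vec{e}_{n}.$$
One is led to this choice by formally computing $(C^{*})^{-1}(C C^{*}) C^{-1}$ using the explicit bidiagonal formula for $C^{-1}$ on the space of all sequences from \S\ref{section444}; the off-diagonal terms cancel, but I would not belabor that derivation. Instead I would verify the identity $C^{*} P C = C C^{*}$ directly from the matrices. Since the $i$th row of $C^{*}$ has entries $\frac{1}{k+1}$ in columns $k \geq i$ (and $0$ otherwise), and $P$ is diagonal, the $(i,j)$ entry of $C^{*} P C$ is $\sum_{k \geq \max\{i,j\}} \frac{1}{k+1}\cdot \frac{k+1}{k+2}\cdot \frac{1}{k+1}$, which telescopes:
$$(C^{*} P C)_{ij} = \sum_{k \geq \max\{i,j\}} \Big(\frac{1}{k+1} - \frac{1}{k+2}\Big) = \frac{1}{\max\{i,j\}+1} = (C C^{*})_{ij},$$
the last equality being the identity $C C^{*} = L$ recorded in \S\ref{section444}. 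Hence $C$ is posinormal with interrupter $P$.

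To finish, note that $0 < \frac{n+1}{n+2} < 1$ for every $n \geq 0$, so $0 \leq P \leq I$, and therefore
$$C C^{*} = C^{*} P C \leq C^{*} C, \qquad \text{i.e.,}\qquad C^{*} C - C C^{*} \geq 0,$$
which is the asserted hyponormality. I do not anticipate a real obstacle once the interrupter is identified: each step is a one-line matrix computation and the telescoping sum does all the work. The only genuinely creative step — and the reason this is cleaner than the original Brown--Halmos--Shields argument via Sylvester's positivity criterion — is recognizing that the interrupter may be taken to be a simple, manifestly contractive diagonal operator; I would arrive at it by the formal computation above, or equivalently by observing that the requirement $C^{*} P C = C C^{*}$ forces the diagonal entries of $P$ to be exactly those that make the sum telescope.
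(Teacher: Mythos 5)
Your proposal is correct and follows essentially the same route as the paper: Rhaly's posinormality argument with the same diagonal interrupter $\operatorname{diag}(\tfrac{1}{2},\tfrac{2}{3},\tfrac{3}{4},\ldots)$, the identity $C^{*}DC = CC^{*}$, and the observation that $I - D \geq 0$. The only difference is that you spell out the telescoping-sum verification of $C^{*}DC = CC^{*}$, which the paper leaves as ``a matrix computation.''
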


\begin{proof}
Consider the diagonal operator 
$$D = \begin{bmatrix}
\frac{1}{2} & \0 & \0 & \0 & \cdots\\[3pt]
\0 & \frac{2}{3} & \0 & \0 & \cdots\\[3pt]
\0 & \0 & \frac{3}{4} & \0 & \cdots\\[3pt]
\0 & \0 & \0 & \frac{4}{5} & \cdots\\
\vdots & \vdots & \vdots & \vdots & \ddots
\end{bmatrix}.$$
Notice that 
$I - D \geq 0$ (diagonal with positive entries).

A matrix computation will show that 
$C^{*} D C = CC^{*}$ and so for all $\vec{a} \in \ell^2$, 
\begin{align*}
\langle (C^{*} C - C C^{*}) \vec{a}, \vec{a}\rangle & = \langle (C^{*} C - C^{*} D C)\vec{a}, \vec{a}\rangle\\
& = \langle (C - D C) \vec{a}, C \vec{a}\rangle\\
& = \langle (I - D) C \vec{a}, C \vec{a}\rangle\\
& \geq 0.
\end{align*}
This says $C^{*} C - C C^{*} \geq 0$, i.e., $C$ is hyponormal. 
\end{proof}

A bounded operator $A$ on a Hilbert space $\mathcal{H}$ is {\em posinormal} if $A A^{*} = A^{*} P A$ for some positive operator $P$ (called the interrupter). 
We have already seen that $C$ is posinormal. It turns out that $C^{*}$ is posinormal as well. Indeed, 
if $S$ denotes the {\em unilateral shift} on $\ell^2$, i.e., 
$$S := 
 \begin{bmatrix} 
\0 & \0 & \0 & \0 & \cdots\\
1 & \0 & \0  & \0& \cdots\\
\0 & 1 & \0 &  \0& \cdots\\
\0 & \0 & 1 & \0 & \cdots\\
\vdots & \vdots & \vdots & \vdots & \ddots
\end{bmatrix},
$$ one can check that the positive operator 
$$P = (C^{*} - S) (C - S^{*})$$ satisfies 
$C^{*} C =  C P C^{*}$ which makes $C^{*}$ posinormal.

General properties of posinormal operators explored in \cite{MR1291108} applied to the Ces\`{a}ro operator and its adjoint, prove the following. 

\begin{Proposition}
For the Ces\`{a}ro operator on $\ell^2$, we have 
$\operatorname{Ran} C = \operatorname{Ran} C^{*}$ and hence $\operatorname{Ran} C$ is dense in $\ell^2$.
\end{Proposition}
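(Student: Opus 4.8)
The plan is to invoke two standard facts about posinormal operators, both established in the cited work \cite{MR1291108}, and then combine them with what has just been shown about $C$ and $C^{*}$. The first fact is the characterization that motivated the name: an operator $A$ is posinormal if and only if $\operatorname{Ran} A \subseteq \operatorname{Ran} A^{*}$ (this is the range-inclusion theorem of Rhaly, which in turn rests on Douglas's majorization lemma — the existence of a positive interrupter $P$ with $AA^{*} = A^{*}PA$ is equivalent, after writing $AA^{*} = (P^{1/2}A)^{*}(P^{1/2}A)$ and applying Douglas, to $\operatorname{Ran} A \subseteq \operatorname{Ran} A^{*}$). Applying this to $C$, whose posinormality was noted just above (the interrupter being $(C-S)(C^{*}-S^{*})$, or equivalently via the factorization $C^{*}DC = CC^{*}$ after a small rearrangement), gives $\operatorname{Ran} C \subseteq \operatorname{Ran} C^{*}$. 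Applying it to $C^{*}$, whose posinormality was verified via the interrupter $P = (C^{*}-S)(C-S^{*})$ satisfying $C^{*}C = CPC^{*}$, gives $\operatorname{Ran} C^{*} \subseteq \operatorname{Ran} C$. The two inclusions together yield $\operatorname{Ran} C = \operatorname{Ran} C^{*}$.

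For the density assertion, the cleanest route is to pass to orthogonal complements. Since $\operatorname{Ran} C = \operatorname{Ran} C^{*}$ as sets, their closures agree, so $\overline{\operatorname{Ran} C} = \overline{\operatorname{Ran} C^{*}}$. Now use the elementary Hilbert-space identities $\overline{\operatorname{Ran} C} = (\ker C^{*})^{\perp}$ and $\overline{\operatorname{Ran} C^{*}} = (\ker C)^{\perp}$. Equating these gives $(\ker C^{*})^{\perp} = (\ker C)^{\perp}$, hence $\ker C^{*} = \ker C$. But Theorem \ref{77uuuh77UUU}(i) tells us $\sigma_{p}(C) = \varnothing$, so in particular $0$ is not an eigenvalue of $C$, i.e., $\ker C = \{0\}$. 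Therefore $\ker C^{*} = \{0\}$ as well, and consequently $\overline{\operatorname{Ran} C} = (\ker C^{*})^{\perp} = \{0\}^{\perp} = \ell^2$. That is, $\operatorname{Ran} C$ is dense in $\ell^2$.

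I do not anticipate a genuine obstacle here; the proof is essentially an assembly of quoted results. If anything, the one point requiring a little care is making sure the posinormality of \emph{both} $C$ and $C^{*}$ is genuinely in hand before quoting the range-inclusion theorem twice — the excerpt supplies interrupters for each, so this is fine, but one should double-check the algebra of the identities $C^{*}DC = CC^{*}$ and $C^{*}C = CPC^{*}$ so that the direction of each inclusion comes out correctly (it is easy to transpose $C$ and $C^{*}$ by accident and conclude the inclusions in a way that does not combine to an equality). A secondary stylistic choice is whether to present the density conclusion via kernels and complements, as above, or more directly: once $\operatorname{Ran} C^{*} \subseteq \operatorname{Ran} C$, density of $\operatorname{Ran} C$ already follows from density of $\operatorname{Ran} C^{*}$, and the latter is immediate from $\ker C = \{0\}$ (Theorem \ref{77uuuh77UUU}(i)) since $\overline{\operatorname{Ran} C^{*}} = (\ker C)^{\perp} = \ell^2$. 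Either framing is short; I would likely present the second, as it uses only one of the two inclusions for the density part and thereby isolates exactly where the non-triviality lies.
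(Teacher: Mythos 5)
Your proposal is correct and follows essentially the same route as the paper, which likewise obtains $\operatorname{Ran} C = \operatorname{Ran} C^{*}$ by applying the posinormality range-inclusion theorem of \cite{MR1291108} to both $C$ and $C^{*}$ (using the interrupters exhibited just before the Proposition) and then deduces density from $\ker C = \{0\}$. The only thing the paper adds is an optional, more elementary verification of density via the identities $\vec{e}_n = (n+1)(C\vec{e}_n - C\vec{e}_{n+1})$, which your argument does not need.
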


One can also see the density of the range of $C$ on $\ell^2$ more directly from the simple facts, using the matrix representation of $C$, that 
$$\vec{e}_0 = C \vec{e}_0 - C \vec{e}_1,$$
$$\tfrac{1}{2} \vec{e}_1 = C \vec{e}_1 - C \vec{e}_2,$$
$$\tfrac{1}{3} \vec{e}_2 = C \vec{e}_2 - C \vec{e}_3,$$
and so on. Note that though the range of $C$ is dense, it is not closed since otherwise $\operatorname{Ran} C = \ell^2$ and, using the fact that $\ker C = \{0\}$, would mean that $C$ is invertible, which it is not (Theorem \ref{77uuuh77UUU}).

We end our discussion of hyponormality with the following observation. Consider the infinite matrix 
$$A = \begin{bmatrix}
\phantom{-}\frac{1}{3} & \phantom{-}\frac{1}{4} & \phantom{-}\frac{1}{5} & \phantom{-}\frac{1}{6} & \phantom{-}\frac{1}{7} & \phantom{-}\frac{1}{8} \cdots \\[3pt]
 -\frac{2}{3} & \phantom{-}\frac{1}{4} & \phantom{-}\frac{1}{5} &\phantom{-} \frac{1}{6} & \phantom{-}\frac{1}{7} & \phantom{-}\frac{1}{8} \cdots \\[3pt]
 \phantom{-}\0 & -\frac{3}{4} & \phantom{-}\frac{1}{5} & \phantom{-}\frac{1}{6} & \phantom{-}\frac{1}{7} & \phantom{-}\frac{1}{8} \cdots \\[3pt]
 \phantom{-}\0 & \phantom{-}\0 & -\frac{4}{5} & \phantom{-}\frac{1}{6} & \phantom{-}\frac{1}{7} & \phantom{-}\frac{1}{8} \cdots \\[3pt]
 \phantom{-}\0 & \phantom{-}\0 & \phantom{-}\0 & -\frac{5}{6} & \phantom{-}\frac{1}{7} & \phantom{-}\frac{1}{8} \cdots \\[3pt]
\phantom{-} \0 &\phantom{-} \0 & \phantom{-}\0 & \phantom{-}\0 & -\frac{6}{7} & \phantom{-}\frac{1}{8} \cdots \\
 \phantom{-}\vdots & \phantom{-}\vdots & \phantom{-}\vdots & \phantom{-}\vdots & \phantom{-}\vdots & \phantom{-}\ddots 
\end{bmatrix}.$$
An application of Schur's theorem will show that $A$ is a contraction on $\ell^2$ and a matrix computation will reveal that 
$$C^{*} = A C.$$This is a specific case of a more general result which says that for every  hyponormal operator $T$, there is a contraction $W$ such that $T^{*} = W T$. 

\section{The Ces\`{a}ro operator is subnormal}\label{KT}

We know from the previous section that the Ces\`{a}ro operator on $\ell^2$ (or $H^2$) is hyponormal ($C^{*}C - C C^{*} \geq 0$). One of the highlights in the subject is a result of Kriete and Trutt \cite{MR281025} which says that $C$ is subnormal. A bounded operator $A$ on a Hilbert space $\mathcal{H}$ is {\em subnormal} if there is a Hilbert space $\mathcal{K}$ which contains $\mathcal{H}$ (as a closed subspace) and a bounded normal operator $N$ on $\mathcal{K}$ such that $N \mathcal{H} \subset \mathcal{H}$ and $N|_{\mathcal{H}} = A$. The normal operator $N$ is called a {\em normal extension} of $A$ and is never unique. In short, a subnormal operator is the restriction of a normal operator to an invariant subspace. One can show that a subnormal  operator is hyponormal but the converse is not true. 

Some immediate examples of subnormal operators come from the following construction. Start with a positive, finite, compactly supported, Borel measure $\mu$ on $\C$. The multiplication operator $N_{\mu} f = z f$ on $L^2(\mu)$ is bounded and normal since $N_{\mu}^{*} f = \overline{z} f$ and thus $N_{\mu} N_{\mu}^{*} = N_{\mu}^{*} N_{\mu}$. Furthermore, if $\mathcal{H}^{2}(\mu)$ denotes the closure of the analytic polynomials in $L^2(\mu)$, i.e., the closed linear span of $\{1, z, z^2, z^3, \ldots\}$, then the multiplication operator $S_{\mu} f = z f$ is bounded on $\mathcal{H}^{2}(\mu)$ and is subnormal since $\mathcal{H}^{2}(\mu)$ is an invariant subspace of $N_{\mu}$ and $N_{\mu}|_{\mathcal{H}^{2}(\mu)} = S_{\mu}$. See \cite{MR1112128} for a detailed treatment of subnormal operators. 

What makes the subnormality of the Ces\`{a}ro operator $C$ so surprising is that there is no clear normal extension. The Kriete--Trutt result shows that $C$ is unitarily equivalent to $S_{\mu}$ on some $\mathcal{H}^{2}(\mu)$ space for a measure $\mu$ supported in $\overline{\D}$ (the closure of $\D$). This is enough to verify the subnormality of $C$.

 Their construction starts with the functions 
 \begin{equation}\label{oouJUUhhhggGGGGG}
 \phi_{w}(z) = (1 - z)^{\frac{w}{1 - w}}, \quad w \in \D.
 \end{equation}
 Observe that each function $\phi_w$ is analytic on $\D$ and, since the linear fractional transformation
 $$w \mapsto \frac{w}{1 - w}$$ maps $\D$ to $\{z: \Re z > -\tfrac{1}{2}\}$, it follows that  $\phi_w \in H^2$ (the sequence of Taylor coefficients belongs to $\ell^2$). Since 
 $$\phi_{\frac{n}{n + 1}}(z) = (1 - z)^{n}, \quad n \in \N_0,$$ we see that the span of $\{\phi_w: w \in \D\}$ contains the polynomials which constitute a dense set in $H^2$.

  Using the integral form of the adjoint of $C$ from \eqref{bcvvCBCBCcCC} we see that 
  \begin{equation}\label{vvxmnBNXM}
 C^{*} \phi_w = (1 - w) \phi_w.
 \end{equation}
 The reader will recognize these eigenfunctions from \eqref{KnnNHHcV66}.
  For each $f \in H^2$, the function 
 $$(K f)(z) := \langle f, \phi_{\overline{z}}\rangle$$
 is analytic on $\D$ and the set 
 $$\mathcal{K} := \{K f: f \in H^2\}$$ is a vector space. Since $\{\phi_w: w \in \D\}$ has dense linear span in $H^2$, we see that $K f \equiv 0$ if and only if $f \equiv 0$. This allows us to define a Hilbert space norm on $\mathcal{K}$ as 
 $$\|K f\|_{\mathcal{K}} := \|f\|_{H^2}.$$ With this norm, and corresponding inner product, the operator $K$ is unitary from $H^2$ onto $\mathcal{K}$. 
 
 \begin{Proposition}\label{66ysdfgHHGG998110097878}
 For all $f \in H^2$, we have 
 $$(K C f)(z) = (1 - z) (K f)(z), \quad z \in \D.$$ 
 Thus, $C$ is unitarily equivalent to the operator of multiplication by the function $1 - z$ on $\mathcal{K}$. 
 \end{Proposition}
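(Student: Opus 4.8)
The plan is to read off the result directly from the eigenfunction identity \eqref{vvxmnBNXM} for the adjoint $C^{*}$ together with the defining formula $(Kf)(z) = \langle f, \phi_{\overline{z}}\rangle$. First I would fix $z \in \D$; then $\overline{z} \in \D$, so $\phi_{\overline{z}} \in H^2$ by the discussion following \eqref{oouJUUhhhggGGGGG}, which makes $(Kf)(z) = \langle f, \phi_{\overline{z}}\rangle$ meaningful, and likewise $(KCf)(z)$ is meaningful because $C$ maps $H^2$ boundedly into $H^2$.

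The core computation is then a one-line manipulation. Using that $C$ is a bounded operator on $H^2$ with adjoint $C^{*}$, I would write
$$(KCf)(z) = \langle Cf, \phi_{\overline{z}}\rangle = \langle f, C^{*}\phi_{\overline{z}}\rangle,$$
apply \eqref{vvxmnBNXM} with $w = \overline{z}$ to get $C^{*}\phi_{\overline{z}} = (1 - \overline{z})\,\phi_{\overline{z}}$, and then pull the scalar out of the conjugate-linear second slot of the inner product:
$$\langle f, (1 - \overline{z})\phi_{\overline{z}}\rangle = \overline{(1 - \overline{z})}\,\langle f, \phi_{\overline{z}}\rangle = (1 - z)\,(Kf)(z).$$
This proves the displayed identity $(KCf)(z) = (1 - z)(Kf)(z)$ for all $f \in H^2$ and all $z \in \D$. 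There is no real obstacle here; the only two points that require a moment of care are (i) checking that $\phi_{\overline{z}} \in H^2$ so the pairing is legitimate, and (ii) tracking the complex conjugation correctly so that the multiplier comes out as $1 - z$ rather than $1 - \overline{z}$.

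For the concluding assertion, let $M$ denote the operator of multiplication by the function $1 - z$ on $\mathcal{K}$; this is a bounded operator on $\mathcal{K}$ since $K$ is unitary and $|1 - z| < 2$ on $\D$. The identity just established says precisely that $KCf = M(Kf)$ for every $f \in H^2$, i.e. $KC = MK$ as operators, hence $C = K^{-1}MK$. Because $K \colon H^2 \to \mathcal{K}$ is unitary (this was arranged by the definition $\|Kf\|_{\mathcal{K}} = \|f\|_{H^2}$), this exhibits $C$ as unitarily equivalent to multiplication by $1 - z$ on $\mathcal{K}$, as claimed.
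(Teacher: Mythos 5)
Your proposal is correct and follows exactly the paper's own argument: pass $C$ to its adjoint across the inner product defining $K$, apply the eigenfunction identity \eqref{vvxmnBNXM} to $\phi_{\overline{z}}$, and pull the scalar $1-\overline{z}$ out of the conjugate-linear slot to obtain the multiplier $1-z$. The extra remarks on why $\phi_{\overline{z}}\in H^2$ and on the unitary intertwining $KC=MK$ are sound and consistent with the paper.
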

 
 \begin{proof}
 For $f \in H^2$ and $z \in \D$, use \eqref{vvxmnBNXM} to see that 
 \begin{align*}
(K C f)(z) & = 
\langle C f, \phi_{\overline{z}}\rangle_{H^2}\\ & = \langle f, C^{*} \phi_{\overline{z}}\rangle_{H^2} \\
& = \langle f, (1 - \overline{z}) \phi_{\overline{z}}\rangle_{H^2}\\
& = (1 - z) \langle f, \phi_{\overline{z}}\rangle_{H^2}\\
& = (1 - z) (K f)(z). \qedhere
\end{align*}
\end{proof}

Therefore, the Ces\`{a}ro operator is unitarily equivalent to the operator of multiplication by $1 - z$ on a Hilbert space $\mathcal{K}$ of analytic functions on $\D$. A discussion of Kriete and Trutt shows that $\mathcal{K}$ contains the polynomials as a dense set and, more importantly (and quite difficult to prove), there is a positive finite Borel measure $\mu$ on $\overline{\D}$ such that 
\begin{equation}\label{qppoUHggfffTT}
\int_{\D} |p|^2 d \mu = \|p\|^{2}_{\mathcal{K}} \quad \mbox{for all $p \in \C[z]$}.
\end{equation}
 This says that $\mathcal{K} = \mathcal{H}^{2}(\mu)$ and thus, $C$ is unitarily equivalent to $I - S_{\mu}$. This last operator is subnormal and hence $C$ is subnormal. The measure $\mu$ is quite complicated which makes the space $\mathcal{H}^{2}(\mu)$ also very complicated. 

\section{The commutant of the Ces\`{a}ro operator}\label{commuette}

We discussed the infinite matrices $T$ which commute with the Ces\`{a}ro matrix $C$, i.e., $C T = T C$ in Theorem \ref{sdlfuuHHJHJHH}. These are the Hausdorff matrices. However, not all of these Hausdorff matrices define bounded operators on $\ell^2$. In this section we describe those which define {\em bounded} operators on $\ell^2$. 

The commutant of the Ces\`{a}ro operator on $\ell^2$ is defined to be the bounded operators $A$ on $\ell^2$ such that 
$A C = C A$. This set, traditionally denoted by $\{C\}'$, is an algebra that is also closed in the weak operator topology. One can describe $\{C\}'$ via the space $\mathcal{H}^2(\mu)$ used in the previous section to prove that $C$ is subnormal. In \cite{MR281025} Kriete and Trutt showed that $C$ is unitarily equivalent to $I - S_{\mu}$ (multiplication by $1 - z$) on $\mathcal{H}^2(\mu)$ for some measure $\mu$ on $\overline{\D}$. In \cite{MR350489} they went on to show the following. 

\begin{Proposition}\label{77yYYHNFGVfgghghyY}
Let $\mu$ be the measure on $\overline{\D}$ from \eqref{qppoUHggfffTT}.
\begin{enumerate}
\item[(i)] For each $\phi \in H^{\infty}$, the algebra of bounded analytic functions on $\D$, the operator $\phi(S_{\mu})$ is a well defined bounded operator on $\mathcal{H}^2(\mu)$ that is equal to $\phi(S_{\mu}) f = \phi f$, $f \in \mathcal{H}^2(\mu)$. 
\item[(ii)] $\{S_{\mu}\}' = \{\phi(S_{\mu}): \phi \in H^{\infty}\}$.
\end{enumerate}
\end{Proposition}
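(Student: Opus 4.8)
The plan is to establish Proposition \ref{77yYYHNFGVfgghghyY} in two stages: first showing that every $\phi \in H^\infty$ yields a bounded multiplication operator on $\mathcal{H}^2(\mu)$ commuting with $S_\mu$, and then showing conversely that every operator in $\{S_\mu\}'$ arises this way. For part (i), the natural approach is to recall that $\mathcal{H}^2(\mu)$ is a reproducing kernel Hilbert space of analytic functions on $\D$ (this follows from the fact, established in \S\ref{KT}, that $\mathcal{H}^2(\mu) = \mathcal{K}$ consists of functions $(Kf)(z) = \langle f, \phi_{\bar z}\rangle$, so point evaluation is bounded). Given $\phi \in H^\infty$ and $f \in \C[z]$, the product $\phi f$ need not be a polynomial, so one must argue $\phi f \in \mathcal{H}^2(\mu)$ with $\|\phi f\|_{\mathcal{H}^2(\mu)} \leq \|\phi\|_\infty \|f\|_{\mathcal{H}^2(\mu)}$; the standard device is to approximate $\phi$ by its dilates $\phi_r(z) = \phi(rz)$, each of which \emph{is} a uniform limit of polynomials on $\overline{\D}$, hence multiplication by $\phi_r$ is clearly bounded with norm at most $\|\phi_r\|_\infty \leq \|\phi\|_\infty$, and then pass to the limit $r \to 1^-$ using that $\phi_r f \to \phi f$ pointwise on $\D$ together with a uniform boundedness / weak-compactness argument in $\mathcal{H}^2(\mu)$. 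That this limiting operator is $\phi(S_\mu)$ in the functional-calculus sense follows by checking it agrees with $p(S_\mu)$ for polynomials and is a weak-operator limit. Commutation with $S_\mu$ is then immediate since multiplication operators commute.

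For part (ii), the inclusion $\{\phi(S_\mu) : \phi \in H^\infty\} \subseteq \{S_\mu\}'$ is covered by part (i). For the reverse inclusion, suppose $A \in \{S_\mu\}'$, i.e.\ $A S_\mu = S_\mu A$ on $\mathcal{H}^2(\mu)$. The key observation is that the constant function $1$ is a cyclic vector for $S_\mu$ (its orbit spans the polynomials, which are dense). Set $\phi := A1 \in \mathcal{H}^2(\mu)$, an analytic function on $\D$. Then for any polynomial $p$, using commutation repeatedly, $A p = A (p(S_\mu) 1) = p(S_\mu)(A1) = p \cdot \phi$, so $A$ acts as multiplication by $\phi$ on the dense set $\C[z]$. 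It remains to show $\phi \in H^\infty$ with $\|\phi\|_\infty \leq \|A\|$, and that $A$ is genuinely the multiplication operator $\phi(S_\mu)$ from part (i). Boundedness of $\phi$ follows from the reproducing-kernel structure: if $k_w$ denotes the reproducing kernel at $w \in \D$, then $k_w$ is an eigenvector of $S_\mu^*$ with eigenvalue $\bar w$ (because $\langle S_\mu f, k_w\rangle = (S_\mu f)(w) = w f(w) = w\langle f, k_w\rangle$), hence an eigenvector of $A^*$ with eigenvalue $\overline{\phi(w)}$, which forces $|\phi(w)| \leq \|A^*\| = \|A\|$ for all $w \in \D$.

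The main obstacle I expect is the functional-analytic care needed in part (i) — specifically, verifying that for a general $\phi \in H^\infty$ the operator of multiplication by $\phi$ really does map $\mathcal{H}^2(\mu)$ boundedly into itself, since unlike the model space $H^2$ one does not have an a priori supremum-norm estimate available directly from the definition of $\|\cdot\|_{\mathcal{H}^2(\mu)}$ as an $L^2(\mu)$ norm over $\overline{\D}$. Here the cleanest route is to exploit \eqref{qppoUHggfffTT}: since $\|p\|_{\mathcal{H}^2(\mu)}^2 = \int_{\overline{\D}} |p|^2\,d\mu$ for polynomials, one gets $\int |\phi_r p|^2\,d\mu \leq \|\phi\|_\infty^2 \int |p|^2\,d\mu = \|\phi\|_\infty^2\|p\|_{\mathcal{H}^2(\mu)}^2$ provided $\mu$ is supported where $|\phi_r| \leq \|\phi\|_\infty$ — true since $\mathrm{supp}\,\mu \subseteq \overline{\D}$ and $\phi_r$ extends continuously to $\overline{\D}$ with sup-norm $\leq \|\phi\|_\infty$. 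Taking $r \to 1$ and invoking Fatou gives the estimate for $\phi f$, and density of polynomials finishes it. The identification of the resulting operator with $\phi(S_\mu)$ and the consistency between the two halves of the proof are then bookkeeping. I would present this, noting that it is an instance of the general principle (Shields--Wallen \cite{MR287352}, as the paper mentions) describing commutants of subnormal multiplication operators on reproducing kernel spaces, and refer to \cite{MR350489} for the original argument.
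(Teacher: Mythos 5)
Your proposal is correct and follows the same overall skeleton as the paper's proof (the commutant argument applied to the cyclic vector $1$, followed by density of the polynomials), but you fill in, by different means, precisely the two points the paper delegates elsewhere. For part (i), the paper simply cites Kriete--Trutt \cite{MR350489} for the boundedness of multiplication by an $H^{\infty}$ function and obtains $\phi(S_{\mu})$ as a weak-operator limit of $\phi_n(S_{\mu})$ for polynomials $\phi_n$ converging weak-$*$ to $\phi$; your dilate argument ($\phi_r(z)=\phi(rz)$ is continuous on $\overline{\D}$, hence a uniform limit of polynomials there, so \eqref{qppoUHggfffTT} gives $\|\phi_r p\|\leq\|\phi\|_{\infty}\|p\|$, and a weak-compactness passage as $r\to 1^-$ using bounded point evaluations identifies the limit) is a legitimate, more self-contained substitute. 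For part (ii), where the paper approximates $f$ by polynomials converging $\mu$-a.e.\ and then invokes ``standard results'' for $\psi=T1\in H^{\infty}$, you supply that standard result explicitly via the reproducing-kernel estimate $|\phi(w)|\leq\|A\|$. One small point to tighten: you justify $A^{*}k_w=\overline{\phi(w)}k_w$ by saying $k_w$ is an eigenvector of $S_{\mu}^{*}$, which only shows $A^{*}k_w$ lies in $\ker(S_{\mu}^{*}-\overline{w})$ and requires knowing that eigenspace is one-dimensional; it is cleaner to compute directly $\langle Ap,k_w\rangle=(\phi p)(w)=\phi(w)\langle p,k_w\rangle$ for polynomials $p$ and conclude by density. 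With that adjustment the argument is complete, and arguably more informative than the paper's sketch.
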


\begin{proof}
The fact that the operator $f \mapsto \phi f$ is bounded on $\mathcal{H}^2(\mu)$ for all $\phi \in H^{\infty}$ is a detail contained in \cite{MR350489}. Moreover, if $\phi$ is an analytic function on $\D$ which satisfies $\phi \mathcal{H}^2(\mu) \subset \mathcal{H}^{2}(\mu)$, then standard results show that $\phi \in H^{\infty}$. 

Clearly $\phi(S_{\mu})$ is well defined (and is equal to multiplication by $\phi$) for any polynomial $\phi$. If $(\phi_n)_{n \geq 1}$ is a sequence of polynomials that approximate $\phi$ in the weak-$*$ topology on $H^{\infty}$, one can show, as argued in \cite{MR350489}, that $\phi_{n}(S_{\mu}) \to \phi(S_{\mu})$ in the weak operator topology. 

The discussion in the previous paragraph says that 
$\{\phi(S_{\mu}): \phi \in H^{\infty}\} \subset \{S_{\mu}\}'.$
Now let $T \in \{S_{\mu}\}'$. Then for any polynomial $\phi$ we have $\phi(S_{\mu}) T= T \phi(S_{\mu})$. Apply the previous identity to the constant function $1$ to get 
$T \phi = \phi T1.$
Now approximate any $f \in \mathcal{H}^{2}(\mu)$ by a sequence of polynomials in $L^2(\mu)$, and pass to a subsequence if necessary to assume convergence $\mu$-almost everywhere, to see that $T f = \psi f$, where $\psi = T 1$. As discussed earlier in the proof, $\psi \in H^{\infty}$ and $T = \psi(S_{\mu})$. 
\end{proof}

As a corollary to this result we see the following result of Shields and Wallen \cite{MR287352}.

\begin{Theorem}[Shields--Wallen]\label{ShhhhWalll}
For a bounded operator $A$ on $\ell^2$, the following are equivalent. 
\begin{enumerate}
\item[(i)] $A \in \{C\}'$.
\item[(ii)] $A$ belongs to the weak operator closure of $\{p(C): p\in \C[z]\}$.
\item[(ii)] There is a a bounded analytic function $\psi$ on $\{z: |z - 1| < 1\}$ such that $A = \psi(C)$.
\end{enumerate}
\end{Theorem}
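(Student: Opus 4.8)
The plan is to derive Theorem \ref{ShhhhWalll} as a corollary of Proposition \ref{77yYYHNFGVfgghghyY} together with the Kriete--Trutt unitary equivalence from \S\ref{KT}. The backbone is the fact, established in Proposition \ref{66ysdfgHHGG998110097878} and the paragraph following it, that there is a unitary $U \colon H^2 \to \mathcal{H}^2(\mu)$ intertwining $C$ with $I - S_{\mu}$, i.e. $U C U^{-1} = I - S_{\mu}$, where $S_{\mu}$ is multiplication by $z$ on $\mathcal{H}^2(\mu)$ and $\mu$ is the measure from \eqref{qppoUHggfffTT}. Since a unitary conjugation is an algebra isomorphism that is a homeomorphism in the weak operator topology, it carries $\{C\}'$ onto $\{I - S_{\mu}\}' = \{S_{\mu}\}'$ and carries the WOT-closure of $\{p(C) : p \in \C[z]\}$ onto the WOT-closure of $\{p(I - S_{\mu}) : p \in \C[z]\}$, which is the same as the WOT-closure of $\{q(S_{\mu}) : q \in \C[z]\}$ (reparametrize $p(1-z) = q(z)$).

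The first step is the equivalence of (i) and the operator-theoretic description of (ii). By Proposition \ref{77yYYHNFGVfgghghyY}(ii), $\{S_{\mu}\}' = \{\phi(S_{\mu}) : \phi \in H^{\infty}\}$, and from the proof of that proposition we already know every such $\phi(S_{\mu})$ is the WOT-limit of $\phi_n(S_{\mu})$ for polynomials $\phi_n$ approximating $\phi$ weak-$*$; conversely any WOT-limit of polynomials in $S_{\mu}$ lies in $\{S_{\mu}\}'$ because the commutant is WOT-closed. Transporting back through $U$ gives (i) $\iff$ (ii). The second step identifies the function-theoretic form (iii): an element of $\{C\}'$ corresponds to some $\phi(S_{\mu})$ with $\phi \in H^{\infty}(\D)$, and under the intertwining the natural variable on the $\mathcal{H}^2(\mu)$ side is $z$ while the spectral variable for $C$ is $1 - z$; so writing $\psi(\zeta) := \phi(1 - \zeta)$ produces a bounded analytic function on the disk $\{z : |z-1| < 1\}$ — which is exactly $\sigma_p(C^*)$ and the interior of $\sigma(C)$ from Theorem \ref{77uuuh77UUU} — with $A = \psi(C)$, the latter being interpreted via the $H^{\infty}$-functional calculus inherited from $\phi(S_{\mu})$. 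One should note that $A = \psi(C)$ is well-defined here precisely because $C$ is unitarily equivalent to a multiplication operator and $\psi$ is bounded on the relevant disk; this is where the Kriete--Trutt machinery does the heavy lifting.

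The main obstacle is not in the logical assembly — once Proposition \ref{77yYYHNFGVfgghghyY} is granted, the rest is bookkeeping about unitary conjugation and reparametrization of the variable — but in being careful that the WOT-closure of the polynomials in $C$ really does coincide with the full commutant rather than merely the norm-closure, and that the functional calculus $\psi \mapsto \psi(C)$ is the same object whether one builds it from polynomial approximation or from the $H^{\infty}(\D)$-calculus for $S_{\mu}$. Both points are handled inside \cite{MR350489} via the weak-$*$ density of polynomials in $H^{\infty}$ and the corresponding WOT-convergence $\phi_n(S_{\mu}) \to \phi(S_{\mu})$; the only thing requiring genuine attention is confirming that the bounded multipliers of $\mathcal{H}^2(\mu)$ are exactly $H^{\infty}$ and not some larger class, which is the nontrivial input from the structure of the Kriete--Trutt measure $\mu$ and which I would simply cite. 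With those facts in hand the three conditions are seen to be mutually equivalent, completing the proof.
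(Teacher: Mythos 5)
Your proposal is correct and follows the same route the paper takes: the paper presents the Shields--Wallen theorem precisely as a corollary of Proposition \ref{77yYYHNFGVfgghghyY}, transported through the Kriete--Trutt unitary equivalence $C \cong I - S_{\mu}$, with the reparametrization $\psi(\zeta) = \phi(1-\zeta)$ accounting for the disk $\{z : |z-1|<1\}$. Your additional care about the WOT-closure versus norm-closure and the identification of the multiplier algebra with $H^{\infty}$ is exactly the content the paper delegates to \cite{MR350489}.
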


Let us reconnect to the Euler matrices $E_{\lambda}$, $\lambda \in \C$, defined in \eqref{Euler}. We know they commute with the Ces\`{a}ro matrix. Which one of these are bounded and thus belong to the commutant of the Ces\`{a}ro operator on $\ell^2$? The answer comes from \cite{MR3342485}. 

\begin{Theorem}
\hfill 
\begin{enumerate}
\item[(i)] If $\lambda \in (\tfrac{1}{2}, 1]$, then $E_{\lambda}$ defines a bounded operator on $\ell^2$ with $\|E_{\lambda}\| = \lambda^{-\frac{1}{2}}$.
\item If $\lambda \in (0, \tfrac{1}{2}]$, then $E_{\lambda}$ defines a bounded operator on $\ell^2$ with $\|E_{\lambda}\| \leq (1 - \lambda^2)^{-\frac{1}{2}}$.
\item[(iii)] If $\lambda \in \C \setminus (0, 1]$, then $E_{\lambda}$ is unbounded on $\ell^2$.
\end{enumerate} 
\end{Theorem}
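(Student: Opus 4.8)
The plan is to exploit the diagonalization $E_\lambda = W \,\mathrm{diag}(\lambda^n)\, W$ together with the Kriete--Trutt picture, in which the Cesàro operator $C$ is unitarily equivalent to multiplication by $1-z$ on the reproducing kernel space $\mathcal{K}=\mathcal{H}^2(\mu)$. The key observation is that $C=WDW$ with $D=\mathrm{diag}(1/(n+1))$, so formally $E_\lambda = W\,\mathrm{diag}(\lambda^n)\,W = g(C)$ where $g$ is the holomorphic function sending the eigenvalue $\tfrac{1}{n+1}$ of $C$ to $\lambda^n$; explicitly, since $\tfrac{1}{n+1}=s$ corresponds to $n=\tfrac{1-s}{s}$, we have $g(s)=\lambda^{(1-s)/s}=\exp\!\big(\tfrac{1-s}{s}\log\lambda\big)$, which is holomorphic on the half-plane $\Re(1/s)>\tfrac12$, i.e. on the disk $\{|s-1|<1\}$. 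By Theorem \ref{ShhhhWalll}, $E_\lambda$ is a bounded operator on $\ell^2$ precisely when this $g$ is a \emph{bounded} analytic function on $\{|z-1|<1\}$, in which case $\|E_\lambda\|=\|g\|_{\infty}$ on that disk (the norm of a multiplication operator on $\mathcal{H}^2(\mu)$ being the sup-norm of the symbol, via Proposition \ref{77yYYHNFGVfgghghyY}).

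First I would set up the conformal change of variable $z\mapsto \tfrac{1-z}{z}$ carrying the disk $\{|z-1|<1\}$ onto the right half-plane $\Pi=\{\zeta:\Re\zeta>-\tfrac12\}$, so that on $\Pi$ the symbol becomes $h(\zeta)=\lambda^{\zeta}=e^{\zeta\log\lambda}$. The boundedness of $h$ on $\Pi$ is then an elementary computation: writing $\log\lambda = a+ib$ with $a=\log|\lambda|$, one has $|h(\zeta)|=e^{a\,\Re\zeta - b\,\Im\zeta}$, and since $\Im\zeta$ is unbounded this is bounded on $\Pi$ if and only if $b=0$, i.e. $\lambda>0$ real, and then $|h(\zeta)|=|\lambda|^{\Re\zeta}$ with $\Re\zeta>-\tfrac12$. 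For $\lambda>0$ this gives $\sup_\Pi|h| = \lambda^{-1/2}$ when $\lambda\le 1$ (supremum approached as $\Re\zeta\to-\tfrac12$) and $\sup_\Pi |h| = 1 <\infty$ attained in the limit $\Re\zeta\to+\infty$ when... wait, one must be careful: for $\lambda>1$ the quantity $\lambda^{\Re\zeta}$ is unbounded as $\Re\zeta\to+\infty$, so in fact only $0<\lambda\le 1$ gives boundedness, and for those the supremum is $\lambda^{-1/2}$. This already yields part (iii) (unbounded for $\lambda\in\C\setminus(0,1]$) and, combined with the multiplication-operator norm identity, the exact value $\|E_\lambda\|=\lambda^{-1/2}$ claimed in part (i) for $\lambda\in(\tfrac12,1]$ — and indeed for all $\lambda\in(0,1]$, which is consistent with (ii) since $\lambda^{-1/2}\le(1-\lambda^2)^{-1/2}$ there.

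The main subtlety, and the step I expect to be the real obstacle, is \emph{justifying} that $E_\lambda = g(C)$ in the operator-theoretic sense of Theorem \ref{ShhhhWalll}(iii), not merely the formal matrix sense. The identity $C=WDW$ holds as operators on $\mathcal{V}$, but $W$ is an \emph{unbounded} matrix on $\ell^2$ (its rows are alternating binomial coefficients), so one cannot simply conjugate bounded functional calculi through $W$. Instead I would argue directly on the Kriete--Trutt model: the eigenfunctions $\phi_w(z)=(1-z)^{w/(1-w)}$ satisfy $C^*\phi_w=(1-w)\phi_w$ by \eqref{vvxmnBNXM}, and the span of $\{\phi_w\}$ is dense in $H^2$; one checks that $E_\lambda^*$ acts on each $\phi_w$ as multiplication by the scalar $g(1-w)=\lambda^{w/(1-w)}$ — this is exactly the eigenvalue relation $E_\lambda \vec b_m = \lambda^m \vec b_m$ transported to $H^2$ via the identification of $\vec b_m$ with the Taylor coefficients of $f_m(z)=z^m/(1-z)^{m+1}$, using that $E_\lambda=W\mathrm{diag}(\lambda^n)W$ and $W\vec e_m$ relates to the binomial-coefficient eigenvectors. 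Since the $\phi_w$ span $H^2$, $E_\lambda$ is determined by this action, and under the unitary $K$ of Proposition \ref{66ysdfgHHGG998110097878} it becomes multiplication by the bounded symbol $\psi(z)=\lambda^{z/(1-z)}$ on $\mathcal{K}$ precisely when that symbol lies in $H^\infty(\{|z-1|<1\})$. The boundedness bookkeeping for the borderline range $\lambda\in(0,\tfrac12]$ in part (ii) then only requires the crude estimate $\|E_\lambda\|\le\|\psi\|_\infty=\lambda^{-1/2}\le(1-\lambda^2)^{-1/2}$, so no extra work beyond the half-plane computation is needed there; alternatively one can cite \cite{MR3342485} for the sharper picture in that regime.
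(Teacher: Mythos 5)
First, a point of orientation: the paper does not prove this theorem at all --- it is quoted from \cite{MR3342485} --- so there is no ``paper's proof'' to match. Your starting point (that a bounded $E_{\lambda}$ commutes with $C$, hence by Theorem \ref{ShhhhWalll} equals $\psi(C)$ for a bounded analytic $\psi$ on $\{z:|z-1|<1\}$ with $\psi(\tfrac{1}{n+1})=\lambda^{n}$, forced to be $\psi(z)=\lambda^{1/z-1}$ because $(\tfrac{1}{n+1})_{n\geq 0}$ is not a Blaschke sequence for that disk) is exactly the direction the survey itself runs in the paragraph \emph{after} the theorem, and it does legitimately yield part (iii) and the lower bound in part (i). But your write-up contains a genuine error at part (ii): the inequality $\lambda^{-1/2}\leq(1-\lambda^{2})^{-1/2}$ is equivalent to $1-\lambda^{2}\leq\lambda$, i.e.\ $\lambda\geq\tfrac{\sqrt{5}-1}{2}\approx 0.618$, and therefore \emph{fails} throughout $(0,\tfrac{1}{2}]$. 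In that range $(1-\lambda^{2})^{-1/2}$ is strictly \emph{smaller} than $\lambda^{-1/2}$, so the sup-norm of your symbol cannot deliver the bound asserted in (ii). Worse, your own argument (reproducing-kernel lower bound $\|M_{\psi}\|\geq\sup_{\D}|\psi|$ plus the multiplier upper bound) would force $\|E_{\lambda}\|=\lambda^{-1/2}>(1-\lambda^{2})^{-1/2}$ on $(0,\tfrac{1}{2}]$, i.e.\ it \emph{contradicts} the statement you are trying to prove. You should have flagged this tension rather than asserted the false inequality; resolving it requires either a different estimate for small $\lambda$ (presumably the hands-on matrix/Schur-type argument of \cite{MR3342485}, using $(E_{\lambda})_{ik}=\binom{i}{k}\lambda^{k}(1-\lambda)^{i-k}$) or a re-examination of the identification $E_{\lambda}=\psi(C)$ in that range.

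The second gap is the one you yourself call ``the main subtlety'' but do not actually close: justifying $E_{\lambda}=\psi(C)$ in the direction needed to prove \emph{boundedness}. Your proposed verification ``$E_{\lambda}^{*}$ acts on each $\phi_{w}$ as a scalar'' presupposes that $E_{\lambda}^{*}$ is a densely defined operator compatible with the matrix, and your appeal to the eigenvectors $\vec{b}_{m}$ of \eqref{bmmmmm} does not transport to $H^{2}$: those vectors are not in $\ell^{2}$ (the paper uses precisely this to show $\sigma_{p}(C)=\varnothing$), so ``$E_{\lambda}\vec{b}_{m}=\lambda^{m}\vec{b}_{m}$ in $H^{2}$'' has no meaning. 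What is actually needed is the converse implication: given $\lambda\in(0,1]$, the bounded operator $\psi(C)$ with $\psi(z)=\lambda^{1/z-1}$ (obtained as a weak-operator limit of $p_{n}(C)$, $p_{n}\in\C[z]$) has matrix equal to the Hausdorff matrix $W\,\mathrm{diag}(\psi(\tfrac{1}{n+1}))\,W=E_{\lambda}$ of \eqref{Euler}; this follows because each $p_{n}(C)$ is such a Hausdorff matrix and matrix entries converge under weak-operator convergence, but that step must be said. The unboundedness claim (iii) and the norm identity in (i) do go through once this bookkeeping is done (the only-if direction uses only that a bounded $E_{\lambda}$ commutes with $C$ on finitely supported sequences, hence everywhere), so parts (i) and (iii) are essentially recoverable from your outline; part (ii) is not.
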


For $0 < \lambda < 1$,  the matrix $E_{\lambda}$ defines a bounded operator on $\ell^2$ that commutes with the Ces\`{a}ro operator. By Theorem \ref{ShhhhWalll}, $E_{\lambda} = F_{\lambda}(C)$, where $F_{\lambda}$ is a bounded analytic function on $\{z: |z - 1| < 1\}$. We can compute this functions explicitly. Indeed, 
$$F_{\lambda}(C) = W \begin{bmatrix}
F(1) & \0 &\0 & \0 & \cdots\\
\ast & F(\tfrac{1}{2})& \0 & \0 & \cdots\\
\ast & \ast & F(\tfrac{1}{3}) & \0 & \cdots\\
\ast & \ast & \ast & F(\tfrac{1}{4}) & \cdots\\
\vdots & \vdots & \vdots & \vdots & \ddots
\end{bmatrix}W,$$
where $\ast$ denotes an entry that is not important. The function $F_{\lambda}(z) = \lambda^{1/z - 1}$ is a bounded analytic function on $\{z: |z - 1| < 1\}$ with $F_{\lambda}(\tfrac{1}{n}) = \lambda^n$.

In general, from Theorem \ref{sdlfuuHHJHJHH}, the commutant of the Ces\`{a}ro operator on $\ell^2$ is the set of Hausdorff matrices that define bounded operators on $\ell^2$.

\section{The square root of the Ces\`{a}ro operator}\label{sqrt}

The Ces\'{a}ro operator on $\ell^2$ has a (bounded) square root (in fact two of them). The first result along these lines is the following. Below we will use the Taylor series expansion
$$\sqrt{1 - z} = 1-\frac{z}{2}-\frac{z^2}{8}-\frac{z^3}{16}-\frac{5 z^4}{128}-\frac{7
   z^5}{256} \cdots$$
 Notice that the coefficient sequence of $\sqrt{1 - z}$ is absolutely summable.

\begin{Theorem}[Mashreghi--Ptak--Ross \cite{MPR}]
 The following are equivalent for a bounded operator $A$ on $\ell^2$. 
\begin{enumerate}
\item[(i)] $A^2 = C$.
\item[(ii)]
\begin{equation}\label{1110299YU}
A = \pm \Big(I - \tfrac{1}{2} (I - C) -  \tfrac{1}{8} (I - C)^2 - \tfrac{1}{16}  (I - C)^3 + \cdots\Big),
\end{equation}
where the series above converges in operator norm.
\end{enumerate}
\end{Theorem}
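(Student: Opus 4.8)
The plan is to establish the equivalence by analyzing square roots of $C$ through the spectral/functional-calculus structure already developed. Write $N := I - C$, so that $\sigma(N) \subseteq \overline{\D}$ by \eqref{ooKOKooKOJJoo}, and the series $\sqrt{1-z} = \sum_{k \geq 0} c_k z^k$ has absolutely summable coefficients. First I would observe that $\sum_k c_k N^k$ converges in operator norm since $\|N\| = 1$ and $\sum |c_k| < \infty$; call this operator $R$. A direct manipulation of the Cauchy product, using that $(\sqrt{1-z})^2 = 1 - z$ as a formal (indeed absolutely convergent on $\overline{\D}$) power series identity, gives $R^2 = I - N = C$, so the two operators in \eqref{1110299YU} are genuine bounded square roots. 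This proves (ii) $\Rightarrow$ (i).

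The substance is the converse: any bounded $A$ with $A^2 = C$ must be one of these two. First I would note that $A$ commutes with $C = A^2$, so by the Shields--Wallen description (Theorem \ref{ShhhhWalll}) we have $A = \psi(C)$ for some bounded analytic function $\psi$ on the disk $\Omega := \{z : |z-1| < 1\}$. The condition $A^2 = C$ forces $\psi(z)^2 = z$ for all $z \in \Omega$ (this uses that the functional calculus is multiplicative and that the map $\psi \mapsto \psi(C)$ is injective on $H^\infty(\Omega)$ — which follows from Proposition \ref{77yYYHNFGVfgghghyY} via the Kriete--Trutt unitary equivalence, since $\mathcal{H}^2(\mu)$ contains the nonzero constant function, so $\psi(S_\mu)1 = \psi \neq 0$ when $\psi \neq 0$). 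Since $\Omega$ is simply connected and $0 \notin \Omega$, the equation $\psi^2 = z$ has exactly two analytic solutions on $\Omega$, namely the two branches $\pm\sqrt{z}$; translating by $z \mapsto 1-z$ and matching against the power series $\sqrt{1-z}$ evaluated at $N = I - C$ identifies these with $\pm R$. Thus $A = \pm R$, which is exactly the right-hand side of \eqref{1110299YU}.

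I expect the main obstacle to be the injectivity of the $H^\infty(\Omega)$ functional calculus and the precise sense in which ``$\psi(C)^2 = C$ implies $\psi^2(z) = z$ on $\Omega$'' — one must be careful that Theorem \ref{ShhhhWalll} genuinely identifies $\{C\}'$ with $H^\infty$ of the spectral disk in a way that respects products, and that the relevant point evaluations separate points (here the eigenvalues $\tfrac{1}{n+1}$ of $C^*$, equivalently of the multiplication model, accumulate and lie in $\Omega$, so an analytic function on $\Omega$ annihilated by the calculus vanishes on a set with a limit point and is hence $0$). A secondary technical point is checking that the operator-norm convergence in \eqref{1110299YU} is exactly the functional-calculus evaluation $\sqrt{1-z}\big|_{z = N}$ and not merely some ad hoc series; this is immediate once one knows $\|N\| \leq 1$ together with absolute summability of the $c_k$, but it should be stated. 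Everything else — the Cauchy-product verification that $R^2 = C$, and the two-branches count on a simply connected domain avoiding the origin — is routine.
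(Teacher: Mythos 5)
Your proposal is correct and follows essentially the same route as the paper: both reduce the problem to the commutant via $AC=CA$, invoke the Kriete--Trutt $H^\infty$ functional calculus (you through Theorem \ref{ShhhhWalll}, the paper directly through Propositions \ref{66ysdfgHHGG998110097878} and \ref{77yYYHNFGVfgghghyY}) to write $A$ as a bounded analytic function of $C$, conclude that this function must be one of the two branches of the square root on the spectral disk, and then use the absolute summability of the Taylor coefficients of $\sqrt{1-z}$ together with $\|I-C\|=1$ to obtain the norm-convergent series. Your added care about the injectivity and multiplicativity of the calculus fills in details the paper leaves implicit, but does not change the argument.
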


\begin{proof}
$(ii) \implies (i)$ follows from multiplication of series. To see that $(i) \implies (ii)$, observe that if $A$ is bounded on $\ell^2$ and $A^2 = C$, then $A C = C A$. By Proposition \ref{66ysdfgHHGG998110097878}, this implies that the operator $K A K^{*}$ commutes with $I - S_{\mu} $ and thus, by Proposition \ref{77yYYHNFGVfgghghyY}, $K A K^{*} = \phi(S_{\mu})$ for some $\phi \in H^{\infty}$. But since $K A K^{*}$ must also be a bounded square root of $\phi(S_{\mu})$, it must be the case that $\phi(z) = \sqrt{1 - z}$ (or $\phi(z) = -\sqrt{1 - z}$). Now use the fact that 
the coefficient sequence of $\sqrt{1 - z}$ is absolutely summable to see that 
   $$\phi(S_{\mu}) = I - \tfrac{1}{2} S_{\mu} - \tfrac{1}{8} S_{\mu}^2 - \tfrac{1}{16} S_{\mu}^{3} - \cdots$$
   converges in operator norm. Since $K^{*} S_{\mu} K = I - C$ we see that 
   $$A = \pm \Big(I - \tfrac{1}{2} (I - C) -  \tfrac{1}{8} (I - C)^2 - \tfrac{1}{16}  (I - C)^3 + \cdots\Big),$$
   which completes the proof. 
\end{proof}

The theorem above may seem somewhat unsatisfactory since it does not really yield a formula (in some way) for the two square roots of $C$. Towards finding a specific formula for the square root of $C$, recall from \eqref{Haus} that  $C = W D W$, where 
$$W = \begin{bmatrix}
1 & \0 & \0 & \0 & \0 &  \cdots\\[3pt]
1 & -1 & \0 & \0 & \0 & \cdots\\[3pt]
1 & -2 & 1 & \0 & \0 & \cdots\\[3pt]
1 & -3 & 3 & -1 & \0 & \cdots\\[3pt]
1 &-4 & 6 & -4 & 1 & \cdots\\[-3pt]
\vdots & \vdots & \vdots & \vdots & \vdots & \ddots
\end{bmatrix},
\quad D = \begin{bmatrix}
1 & \0 &\0 & \0 & \0 & \cdots\\
\0 & \frac{1}{2}& \0 & \0 & \0 &  \cdots\\
\0 & \0 & \frac{1}{3} & \0 & \0 & \cdots\\
\0 & \0 & \0 & \frac{1}{4}& \0 &  \cdots\\
\0 & \0 & \0 & \0 & \frac{1}{5} & \cdots\\
\vdots & \vdots & \vdots & \vdots & \vdots & \ddots
\end{bmatrix},$$
and $W^2 = I$. 
It is important to notice here that $W$ does not define a bounded operator on $\ell^2$ and the above is merely matrix multiplication, which is justified since the matrices involved are lower triangular. 

From here one can immediately see square roots of the Ces\`{a}ro matrix, namely 
\begin{equation}\label{FFggfffGFGGGGGFFG}
A^{\sigma} = W \begin{bmatrix}
\pm 1 & \0 &\0 & \0 & \cdots\\
\0 & \pm\sqrt{\frac{1}{2}}& \0 & \0 & \cdots\\
\0 & \0 & \pm \sqrt{\frac{1}{3}} & \0 & \cdots\\
\0 & \0 & \0 & \pm \sqrt{\frac{1}{4}} & \cdots\\
\vdots & \vdots & \vdots & \vdots & \ddots
\end{bmatrix}W,
\end{equation}
 where the signs along the diagonal of the middle matrix is determined by  $\sigma: \N \to \{\pm 1\}$. In fact, $A^{\sigma}$ can be computed in closed form as 
 $$A^{\sigma}_{ij} =
\begin{cases}
{\displaystyle  {i \choose j} \sum_{\ell = 0}^{i - j} (-1)^{\ell} \sigma(\ell + j + 1) \frac{1}{\sqrt{\ell + j  + 1}} {i - j \choose  \ell}} & i \geq j,\\
 0 & i < j.
 \end{cases}$$

\begin{Theorem}[Hupert--Leggett \cite{MR979593}]
For an infinite lower-triangular matrix $A$, the following are equivalent. 
\begin{enumerate}
\item[(i)] $A^2 = C$.
\item[(ii)] $A = A^{\sigma}$ for some $\sigma: \N \to \{\pm 1\}$.
\end{enumerate}
\end{Theorem}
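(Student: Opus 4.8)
The plan is to conjugate everything by the involution $W$, which reduces the problem to classifying the lower-triangular square roots of the diagonal matrix $D$. I would first dispatch the easy direction $(ii)\Rightarrow(i)$: writing $\Lambda_\sigma$ for the diagonal matrix with entries $\pm(n+1)^{-1/2}$ that appears in \eqref{FFggfffGFGGGGGFFG}, so that $A^\sigma = W\Lambda_\sigma W$, one computes
\[
(A^\sigma)^2 = W\Lambda_\sigma W \, W \Lambda_\sigma W = W\Lambda_\sigma^2 W = W D W = C,
\]
using $W^2 = I$, $\Lambda_\sigma^2 = D$, and the identity $C = WDW$ from \eqref{Haus}. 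All of these products are legitimate because every matrix involved is lower triangular, so the entrywise sums defining the products are finite.

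For $(i)\Rightarrow(ii)$, suppose $A$ is lower triangular with $A^2 = C$. Set $B := WAW$. Since $W$ and $A$ are lower triangular this product is well defined and $B$ is again lower triangular; moreover $W^2 = I$ gives $A = WBW$ and $B^2 = WA^2W = WCW = D$. Hence it suffices to prove that \emph{every lower-triangular square root $B$ of $D$ is diagonal, with $B_{nn} = \pm d_n^{1/2}$}, where $d_n = 1/(n+1)$; once this is known, $B = \Lambda_\sigma$ for the sign sequence $\sigma$ recording the signs along the diagonal, and therefore $A = W\Lambda_\sigma W = A^\sigma$.

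To prove that claim, expand $(B^2)_{ij} = \sum_k B_{ik}B_{kj}$ and use lower-triangularity to restrict the nonzero terms to $j \le k \le i$. For $i=j$ only $k=i$ survives, so $B_{ii}^2 = d_i$ and $B_{ii} = \pm d_i^{1/2}$. For $i>j$, peeling off the two extreme terms gives
\[
0 = (B^2)_{ij} = B_{ij}\,(B_{ii}+B_{jj}) + \sum_{j<k<i} B_{ik}B_{kj},
\]
and I would induct on $i-j$: when $i-j=1$ the sum is empty, and when $i-j>1$ every factor $B_{ik}$ with $j<k<i$ vanishes by the inductive hypothesis (since $0 < i-k < i-j$). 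In either case $B_{ij}(B_{ii}+B_{jj}) = 0$. Because the diagonal entries $d_n$ of $D$ are pairwise distinct, $B_{ii}+B_{jj} = \pm d_i^{1/2} \pm d_j^{1/2} \neq 0$ whenever $i\neq j$, which forces $B_{ij}=0$. This completes the induction, hence the proof.

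The step I expect to be the crux is exactly this induction on the strictly-lower-triangular part of $B$, and within it the observation that the distinctness of the eigenvalues $\tfrac{1}{n+1}$ of $D$ makes $B_{ii}+B_{jj}$ nonzero off the diagonal — the same rigidity that drove the Hausdorff characterization in Theorem \ref{sdlfuuHHJHJHH}, here applied to square roots rather than to commutants. Everything else is routine bookkeeping with locally finite matrix products. If one wishes, the closed-form expression for $A^\sigma_{ij}$ can then be recovered by unwinding $A^\sigma = W\Lambda_\sigma W$ entrywise, but this is not needed for the equivalence itself.
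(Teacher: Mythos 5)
Your proof is correct, and it follows exactly the framework the paper sets up around the theorem (the factorization $C=WDW$ from \eqref{Haus}, the involutivity $W^2=I$, and the definition of $A^{\sigma}$ in \eqref{FFggfffGFGGGGGFFG}); the paper itself only cites Hupert--Leggett and effectively records the easy direction, so the substance you add is the converse. That converse is handled cleanly: conjugating by $W$ (legitimate since all products of lower-triangular matrices are locally finite and associative) reduces the problem to showing a lower-triangular square root of $D$ must be diagonal, and your induction on $i-j$, resting on the fact that $\pm(i+1)^{-1/2}\pm(j+1)^{-1/2}\neq 0$ for $i\neq j$ because the diagonal entries of $D$ are distinct and positive, is exactly the right rigidity argument. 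No gaps.
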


 The matrices from \eqref{1110299YU} are all lower triangular and one can see that they are  
 $${\tiny  \begin{bmatrix}
1 & \0 & \0 & \0 & \0 &  \cdots\\[3pt]
1 & -1 & \0 & \0 & \0 & \cdots\\[3pt]
1 & -2 & 1 & \0 & \0 & \cdots\\[3pt]
1 & -3 & 3 & -1 & \0 & \cdots\\[3pt]
1 &-4 & 6 & -4 & 1 & \cdots\\[-3pt]
\vdots & \vdots & \vdots & \vdots & \vdots & \ddots
\end{bmatrix}  \begin{bmatrix}
1 & \0 &\0 & \0 & \cdots\\
\0 & \sqrt{\frac{1}{2}}& \0 & \0 & \cdots\\
\0 & \0 & \sqrt{\frac{1}{3}} & \0 & \cdots\\
\0 & \0 & \0 & \sqrt{\frac{1}{4}} & \cdots\\
\vdots & \vdots & \vdots & \vdots & \ddots
\end{bmatrix}\begin{bmatrix}
1 & \0 & \0 & \0 & \0 &  \cdots\\[3pt]
1 & -1 & \0 & \0 & \0 & \cdots\\[3pt]
1 & -2 & 1 & \0 & \0 & \cdots\\[3pt]
1 & -3 & 3 & -1 & \0 & \cdots\\[3pt]
1 &-4 & 6 & -4 & 1 & \cdots\\[-3pt]
\vdots & \vdots & \vdots & \vdots & \vdots & \ddots
\end{bmatrix}} $$ and 
$${\tiny  \begin{bmatrix}
1 & \0 & \0 & \0 & \0 &  \cdots\\[3pt]
1 & -1 & \0 & \0 & \0 & \cdots\\[3pt]
1 & -2 & 1 & \0 & \0 & \cdots\\[3pt]
1 & -3 & 3 & -1 & \0 & \cdots\\[3pt]
1 &-4 & 6 & -4 & 1 & \cdots\\[-3pt]
\vdots & \vdots & \vdots & \vdots & \vdots & \ddots
\end{bmatrix}  \begin{bmatrix}
-1 &\0 &\0 & \0 & \cdots\\
\0 & -\sqrt{\frac{1}{2}}& \0 & \0 & \cdots\\
\0 & \0 & -\sqrt{\frac{1}{3}} & \0 & \cdots\\
\0 & \0 & \0 & -\sqrt{\frac{1}{4}} & \cdots\\
\vdots & \vdots & \vdots & \vdots & \ddots
\end{bmatrix}\begin{bmatrix}
1 & \0 & \0 & \0 & \0 &  \cdots\\[3pt]
1 & -1 & \0 & \0 & \0 & \cdots\\[3pt]
1 & -2 & 1 & \0 & \0 & \cdots\\[3pt]
1 & -3 & 3 & -1 & \0 & \cdots\\[3pt]
1 &-4 & 6 & -4 & 1 & \cdots\\[-3pt]
\vdots & \vdots & \vdots & \vdots & \vdots & \ddots
\end{bmatrix}}.$$
Thus, the two matrices above are the {\em bounded} square roots of the Ces\`{a}ro matrix. The other choices of signs in 
\eqref{FFggfffGFGGGGGFFG} yield square roots of the Ces\`{a}ro matrix but these do not define bounded operators on $\ell^2$. The result above was rediscovered and generalized in \cite{PartGall}.

\section{The Ces\`{a}ro operator and composition operators}\label{CCO}

There is a relationship between the Ces\`{a}ro operator on the Hardy space $H^2$ and composition operators. For any analytic $\phi: \D \to \D$ (often called an {\em analytic self map} of the disk), an application of the Littlewood subordination theorem \cite{Duren} says that the composition operator $C_{\phi} f = f \circ \phi$ defines a bounded operator on $H^2$. 

The first connection between $C_{\phi}$ and $C$ appears in a result of Deddens \cite{MR310691}. If $0 < \alpha < 1$, consider the composition operator $C_{\alpha + (1 - \alpha) z}$. With respect to the standard orthonormal basis $(z^n)_{n \geq 0}$ for $H^2$, the composition operator $C_{\alpha + (1 - \alpha) z}$ has the matrix representation 
$$\begin{bmatrix}
1 & \alpha & \alpha^2 & \alpha^3 & \cdots\\[3pt]
\0 & (1 - \alpha) & 2\alpha (1 - \alpha)& 3 \alpha^2 (1 - \alpha) & \cdots\\[3pt]
\0 & \0 & (1 - \alpha)^2 & 3 \alpha (1 - \alpha)^2 & \cdots \\[3pt]
\0 & \0 & \0 & (1 - \alpha)^3 & \cdots \\
\vdots &\vdots\ & \vdots & \vdots &\ddots
\end{bmatrix}.$$
Furthermore, by matrix multiplication, $C_{\alpha + (1 - \alpha) z}$ commutes with $C^{*}$ and thus
by Theorem \ref{ShhhhWalll}, 
$$C_{\alpha + (1 - \alpha) z} = F(C^{*})$$ for some bounded analytic function on the disk $\{z: |z - 1| < 1\}$. In fact, since 
$$F(C^{*}) = \begin{bmatrix}
F(1) & * & * & * & * &  \cdots\\[3pt]
\0 & F(\frac{1}{2}) &* & * & * & \cdots\\[3pt]
\0 & \0 & F(\frac{1}{3}) & * & * & \cdots\\[3pt]
\0 & \0 & \0 & F(\frac{1}{4}) & * & \cdots\\[3pt]
\0 & \0 & \0 & \0 & F(\frac{1}{5}) & \cdots\\[-3pt]
\vdots & \vdots & \vdots & \vdots & \vdots & \ddots
\end{bmatrix}$$
(the entries with a $\ast$ are unimportant), one can see that 
$F(\tfrac{1}{n}) = (1 - \alpha)^{n - 1}$ and thus, since the sequence $(\tfrac{1}{n})_{n \geq 1}$ does not form a Blaschke sequence in the disk $\{z: |z - 1| < 1\}$, we see that $F(z) = (1 - \alpha)^{1/z - 1}$. Since $C$ and thus $F(C^{*})^{*}$ is subnormal, it follows that $C_{\alpha z + (1 - \alpha)}^{*}$ is subnormal (often called cosubnormal).

Though we will not get into the details here, the Ces\`{a}ro operator also connects to a semigroup of composition operators. Though this dates back to \cite{MR733903, MR310691}, this connection was used most recently in \cite{PartGall} to give some insight into the (complicated) invariant subspace structure for the Ces\`{a}ro operator. Consider the collection of analytic self maps 
$$\phi_{t}(z) = e^{-t} z + (1 - e^{-t}), \quad t \geq 0.$$
One can show that $\phi_{s + t}(z) = \phi_{s}(\phi_{t}(z))$ and that this family of mappings form a {\em holomorphic flow.} Moreover, each of these functions $\phi_t$  maps $\D$ onto another disk which is internally tangent to $\D$ at $z = 1$. The family of corresponding composition operators 
$\{C_{\phi_t}: t \geq 0\}$ forms a strongly continuous semigroup of operators on $H^2$. Using the fact that one can quickly identify the infinitesimal generator of the semigroup, Gallardo and Partington \cite{PartGall} show that 
$$(C^{*} f)(z) = \int_{0}^{\infty} e^{-t} (C_{\phi_t} f)(z) dt.$$
They use this to discuss the invariant subspaces of $C$ in the following Beurling--Lax type of result. 

\begin{Theorem}[Gallardo--Partington \cite{PartGall}]
A closed subspace $\mathcal{M}$ of $H^2$ is invariant for the Ces\`{a}ro operator if and only if $\mathcal{M}^{\perp}$ is invariant for each $C_{\phi_t}$, $t \geq 0$.
\end{Theorem}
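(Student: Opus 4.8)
The plan is to exploit the integral representation
$$(C^{*} f)(z) = \int_{0}^{\infty} e^{-t} (C_{\phi_t} f)(z)\, dt$$
established by Gallardo and Partington, together with the fact that $\{C_{\phi_t} : t \geq 0\}$ is a strongly continuous semigroup on $H^2$ whose resolvent is closely tied to $C^{*}$. The natural statement to prove is the equivalence ``$\mathcal{M}$ is $C$-invariant $\iff$ $\mathcal{M}^{\perp}$ is invariant for every $C_{\phi_t}$, $t \ge 0$.'' The first move is a standard duality observation: a closed subspace $\mathcal{M}$ is invariant for a bounded operator $A$ if and only if $\mathcal{M}^{\perp}$ is invariant for $A^{*}$. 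Applying this with $A = C$ reduces the problem to showing that $\mathcal{M}^{\perp}$ is $C^{*}$-invariant if and only if $\mathcal{M}^{\perp}$ is $C_{\phi_t}$-invariant for all $t \ge 0$. So writing $\mathcal{N} = \mathcal{M}^{\perp}$, I want: $C^{*}\mathcal{N} \subset \mathcal{N}$ iff $C_{\phi_t}\mathcal{N} \subset \mathcal{N}$ for all $t$.

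For the direction ($\Leftarrow$): if $C_{\phi_t}\mathcal{N} \subset \mathcal{N}$ for every $t$, then for $f \in \mathcal{N}$ the vector-valued integrand $t \mapsto e^{-t} C_{\phi_t} f$ takes values in the closed subspace $\mathcal{N}$, and is continuous in $t$ (strong continuity of the semigroup) with $\|e^{-t}C_{\phi_t}f\| \le e^{-t}\|C_{\phi_t}\|\|f\|$ integrable over $[0,\infty)$; hence the Bochner integral $\int_0^\infty e^{-t} C_{\phi_t} f\, dt = C^{*} f$ again lies in $\mathcal{N}$ (a closed subspace is closed under Bochner integration of its own-valued functions, e.g.\ by testing against functionals in $\mathcal{N}^{\perp}$). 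For the direction ($\Rightarrow$): here I would use that $C^{*}$ is essentially the resolvent of the (negative) infinitesimal generator $\Gamma$ of the semigroup — from $(C^{*} f)(z) = \int_0^\infty e^{-t}(C_{\phi_t} f)(z)\,dt$ one reads off $C^{*} = (I - \Gamma)^{-1}$ (equivalently $\Gamma = I - (C^{*})^{-1}$), where $C_{\phi_t} = e^{t\Gamma}$. Standard semigroup theory then gives $C_{\phi_t} = \lim_{n\to\infty} e^{t\Gamma_n}$ in the strong operator topology, where $\Gamma_n = n\Gamma(nI - \Gamma)^{-1}$ is the Yosida approximation, and each $\Gamma_n$ is a bounded operator lying in the closed algebra generated by $(nI-\Gamma)^{-1}$, hence a (norm) limit of polynomials in the resolvent, hence expressible through $C^{*}$. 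Consequently each $e^{t\Gamma_n}$ is a norm limit of polynomials in $C^{*}$; if $\mathcal{N}$ is $C^{*}$-invariant and closed it is invariant under every polynomial in $C^{*}$, under norm limits of such, and finally under the strong limit $C_{\phi_t}$.

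I expect the main obstacle to be the careful bookkeeping in the ($\Rightarrow$) direction: one must know that $\Gamma$ generates the semigroup $\{C_{\phi_t}\}$ and that $C^{*} = (I-\Gamma)^{-1}$, so that $1$ lies in the resolvent set of $\Gamma$ and the Hille--Yosida/Yosida-approximation machinery applies; this is where the specific structure of the flow $\phi_t(z) = e^{-t}z + (1-e^{-t})$ and the computation of its generator (as done in \cite{PartGall, MR310691, MR733903}) is needed, and one should double-check that $C^{*}$, whose spectrum is the disk $\{|z-1|\le 1\}$, is genuinely the resolvent at the point $1$ of a generator whose spectrum lies in $\{\Re z \le 0\}$. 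An alternative, possibly cleaner, route for ($\Rightarrow$) avoids Yosida approximations entirely: since $\mathcal{N}$ is closed and $C^{*}$-invariant, it is invariant under $(nI - \Gamma)^{-1} = n(nC^{*})(nI - (I - (C^{*})^{-1}))^{-1}$ — i.e.\ under the resolvent of $\Gamma$ at every large real $\lambda = n$ — directly from $C^{*}$-invariance by the resolvent identity and a Neumann-series argument, and then $C_{\phi_t} f = \lim_{n} (I - \tfrac{t}{n}\Gamma)^{-n} f$ (the Hille--Yosida exponential formula) exhibits $C_{\phi_t} f$ as a limit of vectors in $\mathcal{N}$. Either way, the content is: invariance under $C$ $\Leftrightarrow$ invariance under $C^{*}$ on the orthocomplement $\Leftrightarrow$ invariance under all resolvents of $\Gamma$ $\Leftrightarrow$ invariance under all $C_{\phi_t} = e^{t\Gamma}$, with the integral formula of Gallardo--Partington supplying the bridge between $C^{*}$ and the semigroup.
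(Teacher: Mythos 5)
The paper itself offers no proof of this theorem: it is stated as a citation to Gallardo--Partington, with only the integral representation $(C^{*}f)(z)=\int_{0}^{\infty}e^{-t}(C_{\phi_t}f)(z)\,dt$ and the semigroup structure supplied as context. Your argument is essentially the proof that this setup is pointing at, and I believe it is sound. The reduction to ``$\mathcal{N}=\mathcal{M}^{\perp}$ is $C^{*}$-invariant iff $C_{\phi_t}$-invariant for all $t$'' is standard; the $(\Leftarrow)$ direction via the Bochner integral is clean (the integrability you assert does hold: $\|C_{\phi_t}\|\leq\sqrt{2e^{t}-1}$, so $e^{-t}\|C_{\phi_t}\|\lesssim e^{-t/2}$); and the $(\Rightarrow)$ direction via $C^{*}=R(1,\Gamma)$ plus the exponential formula $C_{\phi_t}f=\lim_{n}\bigl(\tfrac{n}{t}R(\tfrac{n}{t},\Gamma)\bigr)^{n}f$ is the right machinery. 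Two points deserve attention. First, the growth bound of the semigroup is $1/2$, not $0$ (the spectrum of $\Gamma$ sits in $\{\operatorname{Re}z\leq\tfrac12\}$, as one sees by applying $w\mapsto 1-1/w$ to $\sigma(C^{*})=\{|w-1|\leq1\}$); this is harmless since $1>1/2$, so the Laplace transform at $\lambda=1$ is genuinely $R(1,\Gamma)$, but your parenthetical ``spectrum in $\{\operatorname{Re}z\leq0\}$'' is not what you should be checking. Second, your displayed identity $(nI-\Gamma)^{-1}=n(nC^{*})(nI-(I-(C^{*})^{-1}))^{-1}$ is garbled (it would force $I=n^{2}C^{*}$); the correct statement is $(nI-\Gamma)^{-1}=C^{*}\bigl((n-1)C^{*}+I\bigr)^{-1}$, and the honest route to ``invariance under $R(1,\Gamma)$ implies invariance under $R(\lambda,\Gamma)$ for all large real $\lambda$'' is the one you sketch in words: the Neumann series $R(\lambda)=\sum_{k\geq0}(\mu-\lambda)^{k}R(\mu)^{k+1}$ makes the set of $\lambda$ at which $\mathcal{N}$ is $R(\lambda,\Gamma)$-invariant open and relatively closed in the connected set $\{\operatorname{Re}\lambda>\tfrac12\}$, hence all of it. With that formula repaired and the growth-bound check made explicit, the proof is complete.
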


\section{Invariant subspaces}\label{ISCOp}

The invariant subspaces of the Ces\`{a}ro operator on $\ell^2$ seem complicated and do not have a complete description. There are various ways to measure the complexities involved. 

As a first order of business, we establish the fact that $C$ is irreducible meaning there are no closed subspaces $\mathcal{M}$ of $H^2$ for which $C \mathcal{M} \subset \mathcal{M}$ and $C^{*} \mathcal{M} \subset \mathcal{M}$. Indeed, if $C$ were reducible, then, via unitary equivalence,  the multiplication operator $M_z$ on the Kriete--Trutt space $H^2(\mu)$ would also be reducible. Some basic facts from operator theory would mean there is an orthogonal projection $P$ on $H^2(\mu)$ for which $P M_{z} = M_{z} P$.  Well known results concerning the commutant of $M_z$ will show that $P = M_{\phi}$ for some $\phi \in H^{\infty}$. However, from the fact that $P^2 = P$, it follows that $M_{\phi^2} = M_{\phi}$ and thus $\phi \equiv 0$ or $\phi \equiv 1$. Thus, $P = 0$ or $P = I$ and so $M_z$, and hence $C$, is irreducible. 

A paper of Kriete and Trutt \cite{MR350489} began to explore some of the complexities of the invariant subspaces of Ces\`{a}ro operator by means of $M_z$ on $\mathcal{H}^2(\mu)$. For example, if  $Z = (z_n)_{n \geq 1} \subset \D$ and 
$$\mathcal{M}(Z) = \{f \in \mathcal{H}^{2}(\mu): f(z_n) = 0 \; \; \mbox{for all $n \geq 1$}\},$$
then $\mathcal{M}(Z)$ is closed in $\mathcal{H}^2(\mu)$ and is an invariant for $M_z$. In the definition of $\mathcal{M}(Z)$ above, we impose the condition that the derivatives of order $k - 1$ vanish at $z_n$ if the zero $z_n$ is repeated $k$ times in the sequence. Via the functions $\phi_w$ from \eqref{oouJUUhhhggGGGGG} and the unitary equivalence between $C$ and $I - M_z$ given in Proposition \ref{66ysdfgHHGG998110097878}, the space $\mathcal{M}(Z)$ gives rise to the $C$-invariant subspace 
$$\Big(\bigvee\{ \phi_{z_n}: n \geq 1\}\Big)^{\perp}.$$
Of course there is the issue as to whether $\mathcal{M}(Z) \not = \{0\}$, in other words, $Z$ is a zero set for $\mathcal{H}^2(\mu)$, but this issue seems very far from being resolved. Indeed, as pointed out in \cite{MR350489}, one can create two sequences $Z_1$ and $Z_2$ such that $\mathcal{M}(Z_1) \not = \{0\}$ and $\mathcal{M}(Z_2) \not = \{0\}$ but $\mathcal{M}(Z_1) \cap \mathcal{M}(Z_2) = \{0\}$. This indicates that the zeros of $H^{2}(\mu)$ can be quite wild and behave more like those of the Bergman space than those of the Hardy and Dirichlet spaces. 

Further indications of the complexities of the invariant subspaces of the Ces\`{a}ro operator come from the paper \cite{PartGall} where they show that  the invariant subspaces of $C$ are in one-to-one correspondence with the subspaces of $L^2(\R, e^{-2(e^x - 1)}dx)$ which are invariant under the right shift semigroup $\{S_t, t \geq 0\}$, where $S_{t} f(x) = f(x - t)$. Moreover, these right shift invariant subspaces are known to be quite complicated and a complete description of them is unknown.

\section{The continuous Ces\`{a}ro operator}

There is a continuous version of the Ces\`{a}ro operator on $L^2[0, 1]$ (inspired by the integral formula in \eqref{integralformCC}) that was explored in \cite{MR187085}. Define the operator 
$$C_{1}: L^2[0, 1] \to L^{2}[0, 1], \quad (C_1 f)(x) = \frac{1}{x} \int_{0}^{x} f(t) dt.$$
Classical real analysis says that $(C_{1} f)(x)$ defines a continuous function on $[0, 1]$. Furthermore,  an application of Schur's integral test, as was done in \cite{MR187085}, shows that $f \mapsto C_1 f$, denoted by $C_1$, is a bounded operator on $L^{2}[0, 1]$. One can also use an integral version of Hardy's inequality, namely
$$\int_{0}^{1} |(C_1 f)(x)|^2 dx \leq 4 \int_{0}^{1} |f(t)|^2 dt \quad \mbox{for all $f \in L^{2}[0, 1]$,}$$
to show that $\|C_1\| \leq 2$. In fact, $\|C_1\| = 2$.

Unlike its discrete counterpart on $\ell^2$, the continuous Ces\`{a}ro operator $C_1$ has eigenvalues. Indeed, 
$$(C_{1} f)(x) = \lambda f(x) \iff \lambda \frac{d}{dx}(x f(x)) = f(x).$$
The solutions to this differential (Euler) equation are scalar multiplies of the functions 
\begin{equation}\label{oijhbsdfgi7eEV}
f_{\lambda}(x) = x^{\frac{1}{\lambda} - 1} \quad \mbox{where $\Re(\frac{1}{\lambda}) > \frac{1}{2}$}.
\end{equation} From here one can show that the eigenvalues of $C_1$ are the open disk $\{z: |z - 1| < 1\}$ and, in a similar way that was done for the discrete Ces\`{a}ro operator, $\sigma(C_{1}) = \{z: |z - 1| \leq 1\}$.

There is also an $L^p[0, 1]$ version of all of this where one can show that $C_{1}$ is a bounded operator on $L^{p}[0, 1]$ when $1 < p < \infty$ with norm equal to $p/(p - 1)$. Moreover, the  operator $C_1$ on $L^p[0, 1]$ has eigenvalues $\{z: |z - q/2| < q/2\}$ and spectrum the closure of this disk. There is also the resolvent formula \cite{MR322569}
$$(\lambda I - C_{1})^{-1} = \frac{1}{\lambda} I + \frac{1}{\lambda^2} P_{\lambda},  \quad |\lambda - 1| > 1,$$
where 
$$(P_{\lambda} f)(x) = \int_{0}^{1} f(x t) t^{-1/\lambda} dt.$$

One can also show that 
$$(C_{1}^{*} f)(x) = \int_{x}^{1} \frac{f(t)}{t} dt$$
and there is an important connection between $C_{1}^{*}$ and the unilateral shift that was discovered in \cite{MR187085}. By means of Laplace transforms and Hardy spaces of the right-half plane, they prove the following.

\begin{Theorem}[Brown--Halmos--Shields]\label{5558887979}
The operator $I - C_{1}^{*}$ is unitarily equivalent to the shift operator $(S f)(z) = z f(z)$ on the Hardy space $H^2$. 
\end{Theorem}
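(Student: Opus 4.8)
The route is the one indicated in the statement's pedigree: conjugate $I - C_1^{*}$ through a chain of unitaries
\[
L^2[0,1] \xrightarrow{\ U_1\ } L^2(0,\infty) \xrightarrow{\ U_2\ } H^2(\C_{+}) \xrightarrow{\ U_3^{-1}\ } H^2,
\]
keeping track of what the operator becomes at each stage; here $\C_{+} = \{z : \Re z > 0\}$ and $H^2(\C_{+})$ is the Hardy space of the right half-plane. First I would make the substitution $x = e^{-s}$: the map $(U_1 f)(s) = e^{-s/2} f(e^{-s})$ is unitary from $L^2[0,1]$ onto $L^2(0,\infty)$ because $\int_0^1 |f(x)|^2\,dx = \int_0^{\infty} |f(e^{-s})|^2 e^{-s}\,ds$. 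Using the formula $(C_1^{*} f)(x) = \int_x^1 f(t)/t\,dt$ recorded in the excerpt, together with the same substitution $t = e^{-u}$ (so $dt/t = -du$), a one-line computation gives, for $g = U_1 f$,
\[
(U_1 C_1^{*} U_1^{-1} g)(s) = \int_0^{s} e^{-(s-u)/2}\, g(u)\, du,
\]
so that $C_1^{*}$ is carried to the causal convolution operator on $(0,\infty)$ with kernel $k(s) = e^{-s/2}\mathbf{1}_{[0,\infty)}(s)$; since $k \in L^1 \cap L^2$ this is bounded, as it must be.

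Next I would invoke the Paley--Wiener theorem: the Laplace transform $(U_2 g)(z) = (2\pi)^{-1/2}\int_0^{\infty} g(s) e^{-zs}\,ds$ is a unitary of $L^2(0,\infty)$ onto $H^2(\C_{+})$, and it turns causal convolution by $k$ into multiplication by $\widehat{k}(z) = \int_0^{\infty} e^{-s/2} e^{-zs}\,ds = (z + \tfrac12)^{-1}$. Hence $I - C_1^{*}$ is unitarily equivalent to multiplication on $H^2(\C_{+})$ by
\[
\varphi(z) := 1 - \frac{1}{z + \tfrac12} = \frac{z - \tfrac12}{z + \tfrac12}.
\]
The crucial point is that $\varphi$ is precisely the conformal bijection of $\C_{+}$ onto $\D$ (indeed $|z - \tfrac12| < |z + \tfrac12|$ exactly when $\Re z > 0$), with inverse $\psi(w) = \tfrac12(1+w)/(1-w)$ and $\varphi'(z) = (z + \tfrac12)^{-2}$.

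Finally I would apply the standard change-of-variables unitary $U_3 : H^2 \to H^2(\C_{+})$, $(U_3 F)(z) = (2\pi)^{-1/2}(z + \tfrac12)^{-1} F(\varphi(z))$, in which the factor $(z + \tfrac12)^{-1} = \varphi'(z)^{1/2}$ is exactly what makes $U_3$ isometric onto $H^2(\C_{+})$. A direct check gives the intertwining $\varphi \cdot (U_3 F) = U_3(w F)$, so multiplication by $\varphi$ on $H^2(\C_{+})$ is unitarily equivalent, via $U_3$, to multiplication by $w$ on $H^2$ — which is precisely the shift $S$. Composing, $V := U_3^{-1} U_2 U_1$ is a unitary of $L^2[0,1]$ onto $H^2$ with $V(I - C_1^{*})V^{-1} = S$. (Alternatively one can bypass the explicit form of $U_3$: multiplication by the nonconstant inner function $\varphi$ on $H^2(\C_{+})$ is a pure isometry whose cokernel is one-dimensional, hence by the Wold decomposition it is unitarily equivalent to the simple unilateral shift.)

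The main obstacle is bookkeeping rather than substance: pinning down the normalizing constants in the Paley--Wiener isomorphism and in $U_3$, and keeping the half-line conventions straight so that the convolution identity and its Laplace transform come out with the kernel $e^{-s/2}$. The appearance of the exponent $\tfrac12$ there is forced by the unitarity of $U_1$, and it is exactly this exponent that makes $1 - \widehat{k}(z)$ coincide with a conformal map of $\C_{+}$ onto $\D$; once the constants are fixed, the rest is automatic. One should also check that $(C_1^{*}f)(x) = \int_x^1 f(t)/t\,dt$ genuinely represents $C_1^{*}$ on all of $L^2[0,1]$ (a Fubini argument against $(C_1 g)(x) = \tfrac1x\int_0^x g$), but this is already taken for granted in the excerpt.
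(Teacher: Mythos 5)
Your proposal is correct and follows exactly the route the paper attributes to Brown--Halmos--Shields (``by means of Laplace transforms and Hardy spaces of the right-half plane''): the exponential substitution turns $C_1^{*}$ into convolution with $e^{-s/2}$ on $L^2(0,\infty)$, Paley--Wiener turns $I-C_1^{*}$ into multiplication by the conformal map $(z-\tfrac12)/(z+\tfrac12)$ of $\C_{+}$ onto $\D$, and the standard weighted composition unitary carries this to the shift on $H^2$. The computations check out, including the normalization $e^{-s/2}$ forced by unitarity of $U_1$, so nothing further is needed.
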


As an interesting corollary to this, one can prove the following, which is quite cumbersome to do directly. 

\begin{Corollary}
The operator $I - C_{1}^{*}$ is an isometry on $L^2[0, 1]$.
\end{Corollary}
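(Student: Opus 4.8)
The plan is to deduce the corollary directly from Theorem \ref{5558887979}. Since the shift operator $S$ on $H^2$ is an isometry (it sends $z^n \mapsto z^{n+1}$, so it preserves the $\ell^2$ norm of Taylor coefficients), and unitary equivalence preserves the property of being an isometry, the operator $I - C_1^{*}$ must be an isometry on $L^2[0,1]$.

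More explicitly, I would argue as follows. Let $U \colon L^2[0,1] \to H^2$ be the unitary operator implementing the equivalence in Theorem \ref{5558887979}, so that $U (I - C_1^{*}) = S U$. For any $f \in L^2[0,1]$ we then have
\begin{align*}
\|(I - C_1^{*}) f\|_{L^2[0,1]} &= \|U (I - C_1^{*}) f\|_{H^2} = \|S U f\|_{H^2} = \|U f\|_{H^2} = \|f\|_{L^2[0,1]},
\end{align*}
using unitarity of $U$ in the first and last steps and the fact that $S$ is an isometry on $H^2$ in the middle. Hence $I - C_1^{*}$ is an isometry.

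If one wants to be self-contained about the claim that $S$ is an isometry, I would note that for $g(z) = \sum_{n \geq 0} a_n z^n \in H^2$ one has $(Sg)(z) = \sum_{n \geq 0} a_n z^{n+1}$, whose Taylor coefficient sequence is just $(a_n)_{n \geq 0}$ shifted by one index, so $\|Sg\|^2 = \sum_{n \geq 0} |a_n|^2 = \|g\|^2$. There is essentially no obstacle here: the entire content of the corollary is packaged in Theorem \ref{5558887979}, and the corollary is merely the observation that ``isometry'' is a unitary invariant. The only thing worth remarking on is \emph{why} this is ``cumbersome to do directly'': a direct verification would require computing $\|(I - C_1^{*})f\|^2 = \|f\|^2$ by expanding the integral operator $C_1^{*}$, i.e.\ showing $\int_0^1 |f(x) - \int_x^1 t^{-1} f(t)\,dt|^2\,dx = \int_0^1 |f(x)|^2\,dx$, which involves a delicate Fubini argument and cancellation that is far less transparent than the structural proof via the Laplace-transform model.
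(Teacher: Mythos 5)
Your proof is correct and is exactly the argument the paper intends: the corollary is presented as an immediate consequence of Theorem \ref{5558887979}, since the unilateral shift on $H^2$ is an isometry and isometry is preserved under unitary equivalence. No gaps.
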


From Theorem \ref{5558887979} one could obtain the invariant subspaces of $C_{1}$, or $C_{1}^{*}$, via Beurling's characterization of the invariant subspaces of the shift on $H^2$ \cite{Duren}. However, the correspondence in \cite{MR187085} between the invariant subspaces of the shift on $H^2$ the the invariant subspaces of $I - C_{1}^{*}$ is hidden by unitary operators making the invariant subspaces somewhat difficult to understand. Nevertheless, Nikolskii \cite[p.~37]{N1} was able to use Hardy space theory to describe the invariant subspaces as Theorem \ref{iinIkskAgler} below. A direct approach was done recently by Agler and McCarthy in \cite{AglMc}. Their description of the invariant subspaces of $C_{1}$ goes as follows. 

Let $W = \{z: \Re z > -\tfrac{1}{2}\}$ and for a {\em finite} subset $B$ of $W$, let 
$$\mathcal{X}(B) := \operatorname{span} \{x^{b}: b \in B\}.$$
Since 
$$C_{1} x^b = \frac{1}{b + 1} x^b$$ we see that $\mathcal{X}(B)$ is an invariant subspace of $C_{1}$. The condition that $\Re b > -\tfrac{1}{2}$ assures that $x^b \in L^{2}[0, 1]$. Agler and McCarthy call such $\mathcal{X}(B)$ {\em finite monomial spaces}. For a sequence $\mathcal{B} := (B_n)_{n \geq 1}$ of finite subsets of $W$ define 
$$\mathcal{X}_{\mathcal{B}} := \Big\{f \in L^{2}[0, 1]: \lim_{n \to \infty} \operatorname{dist}(f, \mathcal{X}(B_n))= 0\Big\}.$$ One can verify that $\mathcal{X}_{\mathcal{B}}$ is a closed subspace of $L^{2}[0, 1]$ that is invariant for $C_{1}$. These are called {\em monomial spaces}. 

\begin{Theorem}\label{iinIkskAgler}
Let $\mathcal{X}$ be a closed nonzero subspace of $L^{2}[0, 1]$. Then $\mathcal{X}$ is invariant for $C_{1}$ if and only if $\mathcal{X}$ is a monomial space. 
\end{Theorem}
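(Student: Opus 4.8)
The plan is to combine the Brown--Halmos--Shields unitary equivalence of Theorem~\ref{5558887979} with Beurling's theorem, and then decode the resulting inner functions in terms of monomials. Throughout, $W=\{z:\Re z>-\tfrac12\}$, and for $b\in W$ the function $x^{b}$ is the eigenfunction with $C_{1}x^{b}=\tfrac{1}{b+1}x^{b}$ from \eqref{oijhbsdfgi7eEV}. The easy inclusion is that every monomial space is $C_{1}$-invariant: each finite monomial space $\mathcal{X}(B)$ is spanned by eigenvectors of $C_{1}$, and if $f\in\mathcal{X}_{\mathcal{B}}$ one picks $g_{n}\in\mathcal{X}(B_{n})$ with $\|f-g_{n}\|\to0$, so that $C_{1}g_{n}\in\mathcal{X}(B_{n})$ and $\|C_{1}f-C_{1}g_{n}\|\le\|C_{1}\|\,\|f-g_{n}\|\to0$, whence $\operatorname{dist}(C_{1}f,\mathcal{X}(B_{n}))\to0$ and $C_{1}f\in\mathcal{X}_{\mathcal{B}}$.

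For the converse I would transport the problem to $H^{2}$. Let $U\colon L^{2}[0,1]\to H^{2}$ be the unitary of Theorem~\ref{5558887979}, so $U(I-C_{1}^{*})U^{*}=S$, the forward shift; taking adjoints gives $U(I-C_{1})U^{*}=S^{*}$ and hence $UC_{1}U^{*}=I-S^{*}$. Since invariance under an operator and under $I$ minus that operator coincide, a closed subspace $\mathcal{X}\subset L^{2}[0,1]$ is $C_{1}$-invariant if and only if $U\mathcal{X}$ is $S^{*}$-invariant. By Beurling's theorem applied to orthogonal complements, the $S^{*}$-invariant subspaces of $H^{2}$ are $H^{2}$ together with the model spaces $K_{\theta}=H^{2}\ominus\theta H^{2}$, $\theta$ inner (the unimodular constant $\theta$ giving $\{0\}$). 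The degenerate cases are themselves monomial spaces, $\{0\}$ with $B_{n}=\varnothing$ and $L^{2}[0,1]$ with $B_{n}=\{0,1,\dots,n\}$ (the polynomials being dense in $L^{2}[0,1]$), so the real task is to identify each $U^{*}K_{\theta}$ as a monomial space, and conversely.

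The bridge is the eigenfunction correspondence. Since $S^{*}k_{\lambda}=\overline{\lambda}k_{\lambda}$ for the Szeg\H{o} kernel $k_{\lambda}(z)=(1-\overline{\lambda}z)^{-1}$, the vector $U^{*}k_{\lambda}$ is an eigenvector of $C_{1}=U^{*}(I-S^{*})U$; as the eigenspaces of $C_{1}$ are one dimensional and spanned by the $x^{b}$, one gets $U^{*}k_{\lambda}=c(\lambda)\,x^{b(\lambda)}$ with $b(\lambda)=\overline{\lambda}/(1-\overline{\lambda})$, and $\lambda\mapsto b(\lambda)$ is a bijection of $\D$ onto $W$. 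If $\theta$ is a finite Blaschke product with distinct zeros $\lambda_{1},\dots,\lambda_{N}$ then $K_{\theta}=\operatorname{span}\{k_{\lambda_{1}},\dots,k_{\lambda_{N}}\}$, so $U^{*}K_{\theta}=\mathcal{X}(B)$ with $B=\{b(\lambda_{1}),\dots,b(\lambda_{N})\}$, and conversely $\mathcal{X}(B)=U^{*}K_{\theta}$ for the Blaschke product with zeros $b^{-1}(B)$. Repeated zeros bring in pullbacks of the derivatives $\partial_{\lambda}^{j}k_{\lambda}$, which are $x^{b}\log^{j}x$ up to lower order terms; these are norm limits of iterated difference quotients of monomials $x^{b+h}$, so the corresponding finite-dimensional invariant subspaces are still of the form $\mathcal{X}_{\mathcal{B}}$.

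The remaining step is to pass from finite $\theta$ to arbitrary inner $\theta$: write $K_{\theta}$ as a limit of finite-dimensional model spaces $E_{n}$, each spanned by Szeg\H{o} kernels and their derivatives, so that $K_{\theta}=\{g\in H^{2}:\operatorname{dist}(g,E_{n})\to0\}$; then $E_{n}=U(\mathcal{X}(B_{n}))$ yields $U^{*}K_{\theta}=\mathcal{X}_{\mathcal{B}}$, and running the argument backwards realizes every $\mathcal{X}_{\mathcal{B}}$ as some $U^{*}K_{\theta}$. This approximation is the hard part. When $\theta$ is a Blaschke product the $E_{n}$ may be taken to be partial products and the union is increasing, which is routine; but when $\theta$ has a nontrivial singular inner factor, $K_{\theta}$ contains no Szeg\H{o} kernel at all, so the $E_{n}$ cannot be increasing, and one needs a genuine sampling/completeness statement identifying the de~Branges-type space $K_{\theta}$ as the $\operatorname{dist}(\cdot,E_{n})\to0$ limit of finite reproducing-kernel spaces. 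Carrying this out is the technical core of the theorem; the direct treatment of Agler--McCarthy constructs the monomial spaces $\mathcal{X}_{\mathcal{B}}$ and proves their invariance and exhaustiveness without passing through Beurling's theorem, thereby avoiding the need to decode the unitary $U$ explicitly.
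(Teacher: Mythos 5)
First, a point of reference: the paper itself gives no proof of Theorem~\ref{iinIkskAgler} --- it is stated as a survey item attributed to Nikolskii and to Agler--McCarthy --- so your outline has to stand on its own. The route you choose (conjugating by the Brown--Halmos--Shields unitary $U$ so that $C_{1}$ becomes $I-S^{*}$, invoking Beurling's theorem for the $S^{*}$-invariant subspaces, and decoding Szeg\H{o} kernels as monomials via $U^{*}k_{\lambda}=c(\lambda)\,x^{b(\lambda)}$) is exactly the approach the paper describes as possible but ``hidden by unitary operators'' and credits to Nikolskii, as opposed to the direct Agler--McCarthy argument. The easy direction, the eigenvector dictionary, and the case of a finite Blaschke product with distinct zeros are all correct and cleanly executed.

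The difficulty is that the step you yourself call ``the technical core'' is the actual content of the theorem, and you do not supply it. To show that $U^{*}K_{\theta}$ is a monomial space for an arbitrary inner $\theta$, you must exhibit finite sets $\Lambda_{n}\subset\D$ with $K_{\theta}=\{g\in H^{2}:\operatorname{dist}(g,\operatorname{span}\{k_{\lambda}:\lambda\in\Lambda_{n}\})\to 0\}$, and you need \emph{equality}: both that every $g\in K_{\theta}$ is so approximated and that nothing outside $K_{\theta}$ is. When $\theta$ has a nontrivial singular factor, $K_{\theta}$ contains no Szeg\H{o} kernel, the spaces $E_{n}$ are not subspaces of $K_{\theta}$, and the limit set $\{g:\operatorname{dist}(g,E_{n})\to0\}$ is a priori just \emph{some} $S^{*}$-invariant subspace (hence some $K_{\psi}$ or all of $H^{2}$); identifying it as precisely $K_{\theta}$ requires a genuine completeness/approximation theorem, e.g.\ controlling $K_{\theta_{n}}$ for finite Blaschke products $\theta_{n}\to\theta$. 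The repeated-zero case has a smaller instance of the same issue: you show $x^{b}\log^{j}x$ is a limit of difference quotients of monomials, but not that the corresponding $\mathcal{X}_{\mathcal{B}}$ is no larger than the intended finite-dimensional space. As written, the proposal is a correct reduction together with an honest admission that the main step is missing, so it does not constitute a proof of the theorem.
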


There is a version of the continuous Ces\`{a}ro operator that is defined on $L^{2}(0, \infty)$ by 
$$(C_{\infty} f)(x) = \frac{1}{x} \int_{0}^{x} f(t) dt.$$
An analysis with a  version of Hardy's inequality for $L^{2}(0, \infty)$ shows that $\|C_{\infty}\| = 2$. 
Formally, the eigenvectors of $C_{\infty}$ can be computed as done with $C_{1}$ (see \eqref{oijhbsdfgi7eEV}) but none of them belong to $L^{2}(0, \infty)$ and thus $\sigma_{p}(C_{\infty}) = \varnothing$. There is also the resolvent formula 
$$(\lambda I - C_{\infty})^{-1} = \frac{1}{\lambda} I + \frac{1}{\lambda^2} P_{\lambda},  \quad |\lambda - 1| > 1.$$
One also shows that $\sigma(C_{\infty})$ is the circle $\{z: |z - 1| = 1\}$. Somewhat similar to the situation with $C_{1}$, there is the following.

\begin{Theorem}[Brown--Halmos--Shields \cite{MR187085}]
The operator $I - C_{\infty}^{*}$ is unitarily equivalent to the bilateral shift operator $(M f)(\xi) = \xi f(\xi)$ on $L^{2}(\T)$. 
\end{Theorem}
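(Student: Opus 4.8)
In the spirit of Theorem~\ref{5558887979}, but using the Mellin transform --- which is natural here because, unlike $[0,1]$, the half-line $(0,\infty)$ is dilation-invariant and $C_\infty$ commutes with all dilations --- the plan is to turn $C_\infty$ into a multiplication operator. First I would introduce the unitary $U\colon L^2(0,\infty)\to L^2(\R)$, $(Uf)(u)=e^{u/2}f(e^u)$. Writing $C_\infty$ in the form $(C_\infty f)(x)=\int_0^1 f(xs)\,ds$ and substituting $s=e^{v}$, a direct computation shows that $UC_\infty U^{-1}$ is convolution on $\R$ against the kernel $k(t)=e^{-t/2}\chi_{[0,\infty)}(t)$. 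Since $k\in L^1(\R)$ with $\|k\|_1=2$, Young's inequality gives $\|C_\infty\|\le 2$, and $C_\infty$ is unitarily equivalent to a convolution operator on $\R$.

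Next I would apply the Fourier transform $\mathcal F$ on $L^2(\R)$ (fixing one normalization throughout). Convolution against $k$ becomes multiplication by $\widehat k(\xi)=(\tfrac12+i\xi)^{-1}$, so $C_\infty$ is unitarily equivalent to $M_{\widehat k}$ on $L^2(\R,d\xi)$, and consequently $C_\infty^{*}$ is unitarily equivalent to $M_{\overline{\widehat k}}$ with $\overline{\widehat k}(\xi)=(\tfrac12-i\xi)^{-1}$. Therefore $I-C_\infty^{*}$ is unitarily equivalent to multiplication by
\[
\psi(\xi)\;=\;1-\frac{1}{\tfrac12-i\xi}\;=\;-\,\frac{\tfrac12+i\xi}{\tfrac12-i\xi},\qquad \xi\in\R,
\]
on $L^2(\R,d\xi)$. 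One sees at once that $|\psi|\equiv 1$, so $M_\psi$ is unitary, consistent with $\sigma(C_\infty)=\{z:|z-1|=1\}$.

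It remains to identify $M_\psi$ on $L^2(\R,d\xi)$ with the bilateral shift $M_\xi$ on $L^2(\T)$. Here I would observe that $\psi$ is, up to the affine substitution $\xi\mapsto -i\xi$, a M\"obius transformation, that it maps $\R$ bijectively onto $\T\setminus\{1\}$, and that it is a real-analytic diffeomorphism with nowhere-vanishing derivative. Hence the pushforward $\nu:=\psi_{*}(d\xi)$ is a Borel measure on $\T$ that gives no mass to $\{1\}$ and has a strictly positive continuous density with respect to arc length on $\T\setminus\{1\}$; in particular $\nu$ and normalized Lebesgue measure $m$ on $\T$ are mutually absolutely continuous. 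The substitution operator $g\mapsto g\circ\psi$ is unitary from $L^2(\T,\nu)$ onto $L^2(\R,d\xi)$ and intertwines multiplication by the coordinate function of $\T$ with $M_\psi$, while multiplication by $(d\nu/dm)^{1/2}$ is unitary from $L^2(\T,\nu)$ onto $L^2(\T,m)$ and intertwines the two coordinate-multiplication operators. Composing these two unitaries exhibits $I-C_\infty^{*}$ as unitarily equivalent to $M_\xi$ on $L^2(\T)$, the bilateral shift.

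I expect the last identification to be the main obstacle: one must verify that $M_\psi$ has purely absolutely continuous spectrum, of uniform multiplicity one, whose support is all of $\T$. These are precisely the features that the properties of $\psi$ supply --- injectivity forces multiplicity one, smoothness with nonvanishing derivative forces absolute continuity of $\nu$, and $\psi(\R)=\T\setminus\{1\}$ together with $m(\{1\})=0$ forces full support --- but packaging them into an actual unitary equivalence, rather than a bare spectral comparison, is what requires the change-of-variables bookkeeping above. A minor but necessary earlier point is to keep the Fourier normalization consistent so that the Hilbert-space adjoint of $M_{\widehat k}$ really is $M_{\overline{\widehat k}}$; with that in place the remaining computations are routine.
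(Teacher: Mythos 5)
Your argument is correct, and it is essentially the classical Brown--Halmos--Shields route that the paper alludes to but does not reproduce: the survey states this theorem with only a citation, remarking that the companion result for $C_1$ (Theorem \ref{5558887979}) is proved ``by means of Laplace transforms,'' and your Mellin/Fourier version is exactly the natural analogue of that method on $(0,\infty)$. The computations check out ($UC_\infty U^{-1}$ is convolution with $e^{-t/2}\chi_{[0,\infty)}$, the symbol $\psi$ is a unimodular bijection of $\R$ onto $\T\setminus\{1\}$ with nonvanishing derivative, and the pushforward/density bookkeeping in your last step is the right way to upgrade the spectral picture to an actual unitary equivalence), so nothing further is needed.
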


\section{The Ces\`{a}ro operator in other settings}\label{Bergman}

There are a host of results which document when the Ces\`{a}ro operator 
$$(C f)(z) = \frac{1}{z} \int_{0}^{z} \frac{f(\xi)}{1 - \xi} d \xi$$ and its adjoint
$$(A f)(z) = \frac{1}{1 - z}\int_{z}^{1} f(\xi) d \xi$$ define bounded linear transformations on various Banach and Fr\'{e}chet spaces of analytic functions on $\D$. In the Hilbert space $H^2$ we have $A = C^{*}$. 
For a Banach space on analytic functions on $\D$, the adjoint operator acts on the dual space and not the original space. In some cases, one can even compute the spectrum of the operators $C$ and $A$. We document a selection of these results and then provide some guidance where to discover more about this. 

The first set of results deal with the Hardy spaces $H^p$ with $0 < p < \infty$ of analytic function $f$ on $\D$ for which 
$$\|f\|_{H^p} := \Big(\sup_{0 < r < 1} \int_{\T} |f(r \xi)|^p dm(\xi)\Big)^{\frac{1}{p}} < \infty.$$
Note that when $p = 2$, we recover the Hardy space $H^2$ discussed in previous sections. 
It is well known \cite{Duren} that when $1 \leq p < \infty$, the Hardy space $H^p$ is a Banach space with norm $\| \cdot\|_{H^p}$. When $0 < p < 1$, $H^p$ is a Fr\'{e}chet space with the metric $\|\cdot\|_{H^p}^{p}$. As a consequence of \cite{MR11137} and \cite{MR14154}, one can prove that when $1 \leq p < \infty$, the Ces\`{a}ro operator $C$ defines a bounded operator on $H^p$. In \cite{MR897683}, Siskakis used semigroups of weighted composition operators (see below) to prove the following. 

\begin{Theorem}[Siskakis]\label{uygiIIUYTFGHBVGHYTRTYU}
Consider the Ces\`{a}ro operator $C$ on $H^p$. 
\begin{enumerate}
\item[(i)] If $2 \leq p < \infty$, then $\|C\|_{H^p \to H^p} = p$ and the spectrum of $C$ is the closed disk $\{z: |z - \tfrac{1}{p}| \leq \tfrac{1}{p}\}$.
\item[(ii)] If $1 \leq p < 2$, then $p \leq \|C\|_{H^p \to H^p} \leq 2$ and the spectrum of $C$ contains the closed disk  $\{z: |z - \tfrac{1}{p}| \leq \tfrac{1}{p}\}$.
\item[(iii)] If $1 < p < \infty$, then $\|A\|_{H^p \to H^p} = p/(p - 1)$.
\end{enumerate}
\end{Theorem}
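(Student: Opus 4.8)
The plan is to realise both $C$ and the integral operator $A$ as Laplace transforms of strongly continuous semigroups of (weighted) composition operators on $H^p$, and then to read off the norm and the spectrum from the growth bound of those semigroups together with a couple of explicit test functions. The Corollary above gives $C^{-1}f=(1-z)\tfrac{d}{dz}(zf)$, so, setting $\Gamma:=I-C^{-1}$ (explicitly $\Gamma f=zf-z(1-z)f'$), the holomorphic flow solving $\dot\phi_t=\phi_t(\phi_t-1)$, $\phi_0=\mathrm{id}$, is $\phi_t(z)=e^{-t}z/(1-(1-e^{-t})z)$, the family of hyperbolic self-maps of $\D$ with Denjoy--Wolff point $0$; integrating the weight cocycle, $\Gamma$ is the infinitesimal generator on $H^p$ of the weighted composition semigroup $T_tf(z)=\bigl(1-(1-e^{-t})z\bigr)^{-1}f(\phi_t(z))=e^{t/2}\phi_t'(z)^{1/2}f(\phi_t(z))$, and $C=(I-\Gamma)^{-1}=\int_0^\infty e^{-t}T_t\,dt$. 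The same computation for $A$ (whose inverse is $A^{-1}g=g-(1-z)g'$, so that $I-A^{-1}$ sends $g$ to $(1-z)g'$) produces the unweighted composition semigroup $C_{\eta_t}$ of the hyperbolic flow $\eta_t(z)=e^{-t}z+1-e^{-t}$ fixing the boundary point $1$, with $A=\int_0^\infty e^{-t}C_{\eta_t}\,dt$. That these are genuine $C_0$-semigroups on $H^p$ is the Berkson--Porta / Siskakis theory of composition semigroups on Hardy spaces.

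The crux is the computation of the operator norms $\|T_t\|_{H^p\to H^p}$ and $\|C_{\eta_t}\|_{H^p\to H^p}$. Using $f_w(z)=(1-|w|^2)^{1/p}(1-\bar w z)^{-2/p}$ as extremal functions --- equivalently, the known norm formulas for linear-fractional composition operators together with a Carleson-measure estimate of the weight --- one obtains $\|C_{\eta_t}\|_{H^p\to H^p}=e^{t/p}$ and $\|T_t\|_{H^p\to H^p}=e^{\,\max\{1-1/p,\,1/2\}\,t}$; for $T_t$ the two competing exponents reflect mass concentrating at the fixed point of the flow (giving $1-1/p$) versus the hyperbolic ``$\tfrac12$''-contraction along the whole circle (giving $\tfrac12$), and the change in the dominant exponent at $p=2$ is exactly what separates (i) from (ii). Integrating, $\|C\|_{H^p}\le\int_0^\infty e^{-t}\|T_t\|_{H^p}\,dt$, which equals $\int_0^\infty e^{-t/p}\,dt=p$ when $p\ge2$ and $\int_0^\infty e^{-t/2}\,dt=2$ when $1\le p<2$, while $\|A\|_{H^p}\le\int_0^\infty e^{-t}e^{t/p}\,dt=q$. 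For the matching lower bounds, the functions $h_\alpha(z)=(1-z)^{-\alpha}$ lie in $H^p$ exactly when $\operatorname{Re}\alpha<1/p$, and a one-line integration gives $Ah_\alpha=(1-\alpha)^{-1}h_\alpha$ and $Ch_\alpha(z)=(\alpha z)^{-1}\bigl(h_\alpha(z)-1\bigr)$: the first forces $\|A\|_{H^p}\ge(1-\alpha)^{-1}\to q$ as $\alpha\uparrow1/p$, proving (iii); and since the leading singularity of $Ch_\alpha$ at $z=1$ is $\alpha^{-1}h_\alpha$, the second forces $\|C\|_{H^p}\ge\alpha^{-1}\to p$, giving the equality in (i) and the lower estimate in (ii).

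For the spectrum, $\sigma(C)$ on $H^p$ equals $\sigma(C')$ for the Banach-space adjoint $C'$, and for $1<p<\infty$ one has $(H^p)^{*}\cong H^q$ under a pairing in which $C'$ is the operator $A$ acting on $H^q$ (its matrix being the transpose of that of $C$; the case $p=1$ is handled similarly with $(H^1)^*=\mathrm{BMOA}$). Since $Ah_\alpha=(1-\alpha)^{-1}h_\alpha$ with $h_\alpha\in H^q$ whenever $\operatorname{Re}\alpha<1/q=1-1/p$, the eigenvalues of $A$ on $H^q$ fill the open disk $\{z:|z-\tfrac p2|<\tfrac p2\}$ --- the image of $\{\operatorname{Re}w>1/p\}$ under $w\mapsto 1/w$, which is $\{|z-1|<1\}$ when $p=2$ --- and, being eigenvalues of the adjoint that are not eigenvalues of $C$ itself (the would-be eigenfunctions $z^n(1-z)^{-(n+1)}$ of $C$ are not in $H^p$), they lie in the residual spectrum of $C$; hence $\sigma(C)\supseteq\{z:|z-\tfrac p2|\le\tfrac p2\}$ for every $p$. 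For the reverse inclusion, $C=(I-\Gamma)^{-1}$ and the growth bound from the norm computation give $\sigma(\Gamma)\subseteq\{w:\operatorname{Re}w\le\max\{1-1/p,1/2\}\}$; combining this with the identity $C-\lambda I=\lambda(I-\Gamma)^{-1}\bigl(\Gamma-(1-\lambda^{-1})I\bigr)$ for $\lambda\ne0$, and with $0\in\sigma(C)$ (because $C^{-1}$ is unbounded), one gets $\sigma(C)\subseteq\{z:|z-\tfrac p2|\le\tfrac p2\}$ when $p\ge2$, hence equality, which is part (i); when $1\le p<2$ the argument only gives $\sigma(C)\subseteq\{z:|z-1|\le1\}$, so $\sigma(C)$ is merely trapped between $\{z:|z-\tfrac p2|\le\tfrac p2\}$ and $\{z:|z-1|\le1\}$, which is why (ii) asserts only a containment.

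The one genuinely hard step is the norm computation above: extracting the \emph{exact} extremal constants $e^{\max\{1-1/p,1/2\}t}$ and $e^{t/p}$, rather than crude exponential bounds, demands the right choice of extremal functions and a careful estimate of the associated Cauchy/Carleson integrals --- and it is precisely here that the range $1\le p<2$ resists, since there the semigroup estimate $e^{t/2}$ need not be sharp, so this method determines neither $\|C\|_{H^p}$ nor $\sigma(C)$ exactly.
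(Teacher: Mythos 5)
Your proposal follows essentially the route that the paper itself sketches (and that Siskakis's proof takes): subordinate $C$ and $A$ to the weighted and unweighted composition semigroups of the flows $\phi_t$ and $\eta_t$, integrate the semigroup norms for the upper bounds, test on $h_\alpha(z)=(1-z)^{-\alpha}$ for the lower bounds, and pass between $\sigma(C)$ and the spectrum of the generator via the resolvent identity. Two caveats. First, as you concede, the entire analytic content is the sharp semigroup estimate $\|T_t\|_{H^p\to H^p}=e^{\max\{1-1/p,\,1/2\}t}$ (equivalently $\|S_t\|_{H^p\to H^p}\leq e^{-t/p}$ for $p\geq 2$), which you assert rather than prove; this is precisely the step the survey also delegates to \cite{MR897683}, so your write-up is a faithful expansion of the paper's sketch rather than a self-contained proof. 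Second, your computation produces the disk $\{z:|z-\tfrac{p}{2}|\leq\tfrac{p}{2}\}$, not the disk $\{z:|z-\tfrac{1}{p}|\leq\tfrac{1}{p}\}$ appearing in the statement. Your version is the correct one: at $p=2$ it recovers $\{z:|z-1|\leq 1\}$ in agreement with Theorem \ref{77uuuh77UUU}, and it matches the spectral radius forced by $\|C\|_{H^p\to H^p}=p$, whereas the printed disk would give spectral radius $2/p\leq 1$. The statement as printed evidently carries a typo ($\tfrac{1}{p}$ for $\tfrac{p}{2}$), so do not try to reconcile your argument with it.
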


The main driver of Theorem \ref{uygiIIUYTFGHBVGHYTRTYU}, and for many other results which were to follow, is the following observation which is interesting in its own right. The set of maps
$$\phi_{t}(z) = \frac{e^{-t} z}{(e^{-t} - 1) z + 1}, \quad t \geq 0,$$
form a semi-group of analytic self maps of $\D$. If we choose the path of integration $t \mapsto \phi_{t}(z)$ in the formula 
$$(C f)(z) = \frac{1}{z} \int_{0}^{z} \frac{f(\xi)}{1 - \xi} d \xi,$$ we obtain
\begin{align*}
(C f)(z) & =  \frac{1}{z} \int_{0}^{z} \frac{f(\xi)}{1 - \xi} d \xi\\
& = -\frac{1}{z} \int_{0}^{\infty} \frac{(f \circ \phi_t)(z)}{1 - \phi_t(z)} \frac{\partial \phi_t(z)}{\partial t} dt\\
& = \int_{0}^{\infty} \frac{\phi_{t}(z)}{z} f \circ \phi_t(z) dt.
\end{align*}
Siskakis proved that when $2 \leq p < \infty$, the linear transformations
$$(S_{t} f)(z) = \frac{\phi_{t}(z)}{z} f \circ \phi_t(z)$$ define bounded operators on $H^p$ with 
$\|S_{t}\|_{H^p \to H^p} \leq e^{-t/p}$. This estimate, along with the integration formula above, play an important role in estimating $\|C\|_{H^p \to H^p}$. More delicate estimates are needed for the $1 \leq p < 2$ case. 

One can extend, at least the boundedness result from  Theorem \ref{uygiIIUYTFGHBVGHYTRTYU}, as follows \cite{MR1104399}. 

\begin{Theorem}[Miao]
For $0 < p < 1$, the Ces\`{a}ro operator on $H^p$ is bounded. 
\end{Theorem}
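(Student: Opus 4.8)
The plan is to reduce the boundedness of $C$ on $H^p$, $0<p<1$, to a known boundedness criterion for a generalized Ces\`{a}ro (Volterra‑type) integration operator of the kind considered in \S\ref{g}. Since
$$(Cf)(z)=\frac1z\int_0^z\frac{f(\xi)}{1-\xi}\,d\xi ,$$
and since $\|zf\|_{H^p}=\|f\|_{H^p}$ for every $0<p<\infty$ — hence $\|f/z\|_{H^p}=\|f\|_{H^p}$ whenever $f(0)=0$ — the operator $C$ is bounded on $H^p$ if and only if
$$T_g f(z):=\int_0^z f(\xi)\,g'(\xi)\,d\xi ,\qquad g(\xi)=-\log(1-\xi),\ \ g'(\xi)=\frac1{1-\xi},$$
is bounded on $H^p$; note $T_g f(0)=0$, which makes the division by $z$ legitimate, and $T_g f=z\cdot Cf$. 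So the first step is just to record this reduction.

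The key step is then to invoke the criterion that $T_g$ is bounded on $H^p$, for every $0<p<\infty$, if and only if $g$ belongs to $\mathrm{BMOA}$ (the analytic functions of bounded mean oscillation): this is due to Aleman and Siskakis for $1\le p<\infty$ and to Aleman and Cima for $0<p<1$, and the answer is the same $\mathrm{BMOA}$ in both ranges. It remains only to check that $g(z)=-\log(1-z)\in\mathrm{BMOA}$. This is classical: $g$ is analytic on $\D$ since $\Re(1-z)>0$ there, and its boundary function is
$$-\log\bigl|1-e^{i\theta}\bigr|-i\arg\bigl(1-e^{i\theta}\bigr)=\log\frac1{2|\sin(\theta/2)|}-i\arg\bigl(1-e^{i\theta}\bigr),$$
whose imaginary part is bounded by $\pi/2$ and whose real part behaves like the prototypical $\mathrm{BMO}$ function $\log(1/|\theta|)$ near $\theta=0$ and is bounded away from it. Hence $g\in\mathrm{BMOA}$, so $T_g$, and therefore $C$, is bounded on $H^p$.

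It is worth noting why the route that settles $p\ge1$ does not extend directly, and where the genuine difficulty lies. Following Siskakis (Theorem \ref{uygiIIUYTFGHBVGHYTRTYU}) one would write $Cf=\int_0^\infty S_t f\,dt$ with $(S_tf)(z)=\frac{\phi_t(z)}{z}f(\phi_t(z))$, each $S_t$ bounded on $H^p$, and estimate $\|Cf\|_{H^p}\le\int_0^\infty\|S_tf\|_{H^p}\,dt$. For $0<p<1$ this last inequality is simply false: $H^p$ is only a $p$‑Banach space ($\|h_1+h_2\|_{H^p}^p\le\|h_1\|_{H^p}^p+\|h_2\|_{H^p}^p$, with no triangle inequality and no Minkowski integral inequality), and Riemann‑sum estimates for $\int_0^\infty S_tf\,dt$ diverge because $(\Delta t)^p\gg\Delta t$ as $\Delta t\to0$. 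Any proof must therefore avoid controlling $Cf$ through the sizes of the individual $S_tf$ — which is exactly what the reduction to $T_g$ does, by treating $Cf$ as one concrete analytic function. A second, more self‑contained alternative works for $\tfrac12<p<1$: by the Duren--Romberg--Shields theorem $(H^p)^{\ast}$ is the analytic Lipschitz space $\Lambda_\alpha=\{f\in\mathcal O(\D):\sup_z(1-|z|)^{1-\alpha}|f'(z)|<\infty\}$ with $\alpha=\tfrac1p-1\in(0,1)$, and the Ces\`{a}ro adjoint is $A$, $(Af)(z)=\frac1{1-z}\int_z^1 f(\xi)\,d\xi$. A short computation gives $(Af)'(z)=\frac1{(1-z)^2}\int_z^1 f(\xi)\,d\xi-\frac{f(z)}{1-z}$; writing $f(\xi)=f(1)+(f(\xi)-f(1))$ and using $|f(\xi_1)-f(\xi_2)|\lesssim|\xi_1-\xi_2|^\alpha$, the two singular terms $f(1)/(1-z)$ cancel, leaving $|(Af)'(z)|\lesssim|1-z|^{\alpha-1}\le(1-|z|)^{\alpha-1}$, the defining estimate of $\Lambda_\alpha$; hence $A$ is bounded on $\Lambda_\alpha$ and $C$ on $H^p$.

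The main obstacle is thus conceptual rather than computational: the loss of local convexity for $0<p<1$ rules out the term‑by‑term integral estimate that settles $p\ge1$. The $T_g$/$\mathrm{BMOA}$ route circumvents this cleanly and uniformly in $p$, at the cost of invoking the (nontrivial) integration‑operator theorem; the duality route makes the work explicit but runs into a second obstacle at $p=\tfrac12$, where $\alpha=\tfrac1p-1\ge1$ forces one into the higher‑order Lipschitz (Zygmund‑type) spaces defined by $|f^{(n)}(z)|\lesssim(1-|z|)^{\alpha-n}$ for integers $n>\alpha$ — the cancellation at $z=1$ should survive after differentiating $Af$ enough times, but the bookkeeping grows with $n$.
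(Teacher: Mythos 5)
The survey states this theorem without proof, citing Miao's paper, so there is no argument in the text to measure yours against; I will assess the proposal on its own terms. Your main route is correct: $Cf=z^{-1}T_gf$ with $g(z)=\log\frac{1}{1-z}$, division by $z$ is an $H^p$-isometry on functions vanishing at the origin, $g\in\mathrm{BMOA}$, and the Aleman--Cima theorem gives boundedness of $T_g$ on $H^p$ for $0<p<1$. This fits precisely the framework of \S\ref{g}, but note the logic of the attribution: Aleman--Cima (2001) postdates and subsumes Miao (1992) --- the Ces\`{a}ro operator is the motivating example for that circle of results --- so what you have is a correct modern proof by appeal to a strictly stronger theorem, not a reconstruction of Miao's original argument, which is an elementary direct estimate of integral means. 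Your diagnosis of why the Siskakis semigroup/Minkowski argument breaks down for $p<1$ is exactly right and worth recording.

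The ``self-contained alternative'' for $\tfrac12<p<1$, however, has a genuine gap, and it is the very local-convexity issue you identified for the semigroup route. The computation showing $A$ bounded on $\Lambda_\alpha$ is fine (the cancellation of the $f(1)/(1-z)$ terms is correct, and $|1-z|^{\alpha-1}\le(1-|z|)^{\alpha-1}$ since $\alpha<1$). But for $0<p<1$ the implication ``the formal adjoint is bounded on $(H^p)^*$, hence $C$ is bounded on $H^p$'' fails: the Duren--Romberg--Shields pairing only yields $\sup\{|\langle Cf,g\rangle|:\|g\|_{\Lambda_\alpha}\le1\}\lesssim\|f\|_{H^p}$, and that supremum is the norm of $Cf$ in the Banach envelope of $H^p$ (a Bergman-type space containing $H^p$ strictly), not $\|Cf\|_{H^p}$. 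Recovering an operator from its adjoint is exactly what local convexity buys you in the Banach setting and is unavailable here; so the duality route proves only that $C$ maps $H^p$ boundedly into the Banach envelope. The first route should therefore be regarded as the proof.
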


Let us make it clear that in this case  we are not in a Banach space (but in a Fr\'{e}chet space), the phrase ``$C$ is bounded on $H^p$" means there is a constant $k_p > 0$ such that 
$$\int_{\T} |C f|^p dm \leq k_{p} \int_{\T} |f|^p dm \quad \mbox{for all $f \in H^p$}.$$

Using a similar semigroup analysis, Siskakis discussed the Ces\`{a}ro operator on the Bergman spaces $A^p$, $1 \leq p \leq \infty$, of analytic function $f$ on $\D$ which satisfy 
$$\|f\|_{A^p} = \Big(\int_{\D} |f|^p dA\Big)^{\frac{1}{p}} < \infty.$$
In the above, $dA$ denotes planar area measure. The main result from \cite{MR1407335} is the following. 

\begin{Theorem}[Siskakis]
Consider the Ces\`{a}ro operator $C$ on $A^p$. 
\begin{enumerate}
\item[(i)] If $4 \leq p < \infty$. then $\|C\|_{A^p \to A^p} = \tfrac{p}{2}$ and the spectrum of $C$ is the closed  disk $\{z: |z - \tfrac{p}{4}| \leq \tfrac{p}{4}\}$.
\item[(ii)] If $1 \leq p < 4$, then $\tfrac{p}{2} \leq \|C\|_{A^p \to A^p} \leq 2$ and the spectrum of $C$ contains the closed disk $\{z: |z - \tfrac{p}{4}| \leq \tfrac{p}{4}\}$.
\end{enumerate}
\end{Theorem}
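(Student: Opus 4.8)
The plan is to run the composition-semigroup machinery behind Theorem~\ref{uygiIIUYTFGHBVGHYTRTYU}, but with the two-dimensional area measure $dA$ in place of arclength on $\T$; the effect of the extra dimension is to replace the decay rate $e^{-t/p}$ of the weighted composition operators $S_t$ by $e^{-2t/p}$, which is exactly what moves the critical exponent from $2$ to $4$ and halves the limiting disk. With $\phi_t(z) = e^{-t}z/((e^{-t}-1)z+1)$ one has, for $f$ analytic on $\D$, the pointwise identity
\begin{equation}\label{bergsemi}
(Cf)(z) = \int_0^\infty (S_t f)(z)\,dt, \qquad (S_t f)(z) = \frac{\phi_t(z)}{z}\,f(\phi_t(z)),
\end{equation}
established in the discussion preceding Theorem~\ref{uygiIIUYTFGHBVGHYTRTYU}; here $\{\phi_t\}_{t \ge 0}$ is a semigroup of analytic self-maps of $\D$ with $\phi_t^{-1} = \phi_{-t}$, $\phi_t'(0) = e^{-t}$, $\phi_t(1) = 1$, $\phi_t'(1) = e^{t}$, and $\phi_t(\D)$ a disk internally tangent to $\T$ at $1$. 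The crucial step is the estimate
\begin{equation}\label{bergSt}
\|S_t\|_{A^p \to A^p} \le e^{-2t/p}, \qquad t \ge 0,\ p \ge 4.
\end{equation}

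To prove \eqref{bergSt} I would change variables $w = \phi_t(z)$, so $z = \phi_{-t}(w)$ and $dA(z) = |\phi_{-t}'(w)|^2\,dA(w)$, turning
\[
\|S_t f\|_{A^p}^p = \int_{\D} \left|\frac{\phi_t(z)}{z}\right|^p |f(\phi_t(z))|^p\,dA(z) = \int_{\phi_t(\D)} \left|\frac{w}{\phi_{-t}(w)}\right|^p |f(w)|^p\,|\phi_{-t}'(w)|^2\,dA(w);
\]
since $\phi_t(\D) \subseteq \D$, \eqref{bergSt} reduces to the pointwise bound $|w/\phi_{-t}(w)|^p |\phi_{-t}'(w)|^2 \le e^{-2t}$ on $\phi_t(\D)$. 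The left side equals $|H(w)|^2$ for the nonvanishing analytic function $H = (w/\phi_{-t})^{p/2}\,\phi_{-t}'$ on the simply connected domain $\phi_t(\D)$, so its logarithm is harmonic and its supremum over $\overline{\phi_t(\D)}$ is attained on the boundary; parametrizing $\partial\phi_t(\D)$ as $w = \phi_t(\zeta)$ with $\zeta \in \T$ and using $\phi_{-t}'(\phi_t(\zeta)) = 1/\phi_t'(\zeta)$, the bound becomes the one-variable inequality
\[
|\phi_t(\zeta)|^p \le e^{-2t}\,|\phi_t'(\zeta)|^2, \qquad \zeta \in \T.
\]
This last inequality is the step I expect to be the main obstacle. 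A direct computation with the explicit $\phi_t,\phi_t'$ shows it holds with equality at $\zeta = 1$ for every $p$ (so that \eqref{bergSt}, and hence the norm $\tfrac p2$, is sharp) and also at $\zeta = -1$ when $p = 4$, while for $p < 4$ it fails near $\zeta = -1$; the task is to show that the slack at the remaining boundary points is exactly enough when $p \ge 4$. A short additional argument with test functions concentrating near $\zeta = 1$ shows \eqref{bergSt} is in fact an equality.

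Granting \eqref{bergSt}, integrating \eqref{bergsemi} gives $\|C\|_{A^p} \le \int_0^\infty e^{-2t/p}\,dt = \tfrac p2$ for $p \ge 4$, and the matching lower bound will come from the spectral radius. The operator $-\Gamma$ with $\Gamma f = (1-z)(zf)' = C^{-1}f$ (the inverse of $C$ on the space of analytic functions found earlier) is the infinitesimal generator of $\{S_t\}$, so $C = \Gamma^{-1}$; by \eqref{bergSt} the semigroup has growth bound at most $-2/p$, hence $\sigma(\Gamma) \subseteq \{\mu : \Re\mu \ge 2/p\}$, and inverting this half-plane gives $\sigma(C) \subseteq \{z : |z - \tfrac p4| \le \tfrac p4\}$. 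For the reverse inclusion I would compute the point spectrum of the Banach-space adjoint $C'$ on $(A^p)^{*}$ (which is $A^{p'}$, $1/p + 1/p' = 1$, for $1 < p < \infty$, and the Bloch space for $p = 1$): solving $C'g = \lambda g$ in Taylor coefficients yields a one-dimensional eigenspace spanned by a power series whose coefficients are comparable to $j^{1 - 1/\lambda}$, i.e.\ which behaves like $(1-z)^{1/\lambda - 2}$ near $z = 1$, and such a function lies in $(A^p)^{*}$ exactly when $\Re(1/\lambda) > 2/p$, that is, exactly for $\lambda$ in the open disk $\{|z - \tfrac p4| < \tfrac p4\}$. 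Thus $\{|z - \tfrac p4| < \tfrac p4\} \subseteq \sigma_p(C') \subseteq \sigma(C') = \sigma(C)$; taking closures gives $\sigma(C) = \{|z - \tfrac p4| \le \tfrac p4\}$ for $p \ge 4$, whence the spectral radius of $C$ is $\tfrac p2$ and $\|C\|_{A^p} = \tfrac p2$. This proves (i).

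For $1 \le p < 4$ the estimate \eqref{bergSt} is no longer available — the one-variable inequality above already fails near $\zeta = -1$ — so the bound $\|C\|_{A^p} \le 2$ must instead be obtained by a separate, less precise argument (for instance a non-sharp semigroup estimate valid on part of the remaining range when $2 < p < 4$, and, for $1 \le p \le 2$, a Schur-type estimate applied to $C$ written as in \eqref{ooJJonnBO} or an interpolation argument). The adjoint-eigenvalue computation above, however, used no restriction beyond $p \ge 1$, so it still yields $\{|z - \tfrac p4| \le \tfrac p4\} \subseteq \sigma(C)$ and hence $\|C\|_{A^p} \ge \tfrac p2$. This gives (ii).
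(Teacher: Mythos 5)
The paper offers no proof of this theorem: it is quoted from Siskakis's paper \cite{MR1407335} with only the remark that it follows from ``a similar semigroup analysis'' to the Hardy-space case, so there is nothing to compare line by line. Your sketch is a correct elaboration of exactly that method, and the one step you single out as ``the main obstacle'' is in fact no obstacle at all: for $\zeta \in \T$ one has $|\phi_t(\zeta)| = e^{-t}/|(e^{-t}-1)\zeta+1|$ and $|\phi_t'(\zeta)| = e^{-t}/|(e^{-t}-1)\zeta+1|^2 = e^{t}|\phi_t(\zeta)|^2$, whence $e^{-2t}|\phi_t'(\zeta)|^2 = |\phi_t(\zeta)|^4$ identically on $\T$. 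Your required boundary inequality $|\phi_t(\zeta)|^p \leq e^{-2t}|\phi_t'(\zeta)|^2$ is therefore just $|\phi_t(\zeta)|^p \leq |\phi_t(\zeta)|^4$, which holds precisely when $p \geq 4$ since $0 < |\phi_t(\zeta)| \leq 1$; this both closes your gap and confirms your diagnosis that the estimate degenerates exactly at $p=4$ (and at the tangency point $\zeta = 1$ for every $p$, which is what forces sharpness of the norm). The remaining loose ends --- continuity of your auxiliary function $H$ up to $\partial\phi_t(\D)$ (clear, since the pole of $\phi_{-t}$ lies outside $\overline{\phi_t(\D)}$), the duality computation identifying $\sigma_p$ of the adjoint with the open disk $\{|z-\tfrac{p}{4}| < \tfrac{p}{4}\}$, and the crude bound $\|C\|_{A^p}\leq 2$ for $1 \leq p < 4$ --- are stated rather than proved, but they are the right statements and are exactly the ingredients of Siskakis's argument.
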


As one can imagine, the Ces\`{a}ro operator has been studied for many other spaces of analytic functions on $\D$ and it is impossible to survey them all. We direct the reader to the papers  \cite{MR1903676, MR4365517, MR2390921} which contain useful references to the large literature on this subject. 

\section{Generalized Ces\`{a}ro operators}\label{g}

Notice how the Ces\`{a}ro operator (in integral form) can be written as 
$$(C f)(z) = \frac{1}{z} \int_{0}^{z} f(\xi) g'(\xi) d \xi, \quad \mbox{where} \quad g(z) = \log\Big(\frac{1}{1 - z}\Big).$$ For a general $g \in \mathcal{O}(\D)$, one can define the linear transformation on $\mathcal{O}(\D)$ by 
$$(C_g f)(z) =  \frac{1}{z} \int_{0}^{z} f(\xi) g'(\xi) d \xi.$$ 
This indicates how one can generalize the classical Ces\`{a}ro operator on $H^2$ or other spaces of analytic functions. 
It was shown in \cite{MR454017} (see also \cite{MR1869606}) that $C_g$ defines a bounded operator on $H^2$ if and only if $g$ is of bounded mean oscillation.   Notice how the logarithm function $g$ above is of bounded mean oscillation.

The initial papers mentioned above determine when a generalized Ces\`{a}ro operator is bounded on $H^2$. The study of $C_{g}$ continues with the papers \cite{MR4349219, MR2390921} which explore the spectral properties of $C_g$. Other operator theory results such as invariant subspaces (or at least results showing the complexity of them) still need to be developed and this field seems to be wide open and worthy of further exploration. 

We end this paper with a segue to another paper on  Rhaly matrices that will appear in this volume. This is inspired by the relationship between the Hilbert and Ces\`{a}ro matrices explored in \eqref{88hterraced}. A simple integral substitution shows that the  generalized Ces\`{a}ro operator $C_g$ as a linear transformation on $\mathcal{O}(\D)$ can be written in integral form as 
$$(C_{g}f) (z) = \int_{0}^{1} f(t z) g'(t z) dt.$$
With the functions $e_{n}(z)$ defined by 
$$e_{n}(z) = z^n, \quad n \geq 0,$$ observe that if 
$$g(z) = \sum_{k = 0}^{\infty} g_k z^k,$$ then 
\begin{equation}\label{I1}
(C_g e_{n})(z) = z^{n} \sum_{k = 1}^{\infty} \frac{k}{k + n} g_k z^{k - 1}.
\end{equation}
Thus, the matrix representation of $C_g$ with respect to $(e_n)_{n \geq 0}$ is 
$$[C_{g}] = \begin{bmatrix}
g_1 \tfrac{1}{0 + 1} & 0 & 0 & 0 & 0 & \cdots\\
g_{2} \tfrac{2}{0 + 2} & g_1 \tfrac{1}{1 + 1} & 0 & 0 & 0 & \cdots\\
g_{3} \tfrac{3}{0 + 3} & g_{2} \tfrac{2}{1 + 2} & g_{1} \tfrac{1}{2 + 1} & 0 & 0 & \cdots\\
g_{4} \tfrac{4}{0 + 4} & g_{3} \tfrac{3}{1 + 3} & g_{2} \tfrac{2}{2 + 2} & g_{1} \tfrac{1}{3 + 1} & 0 & \cdots\\
g_{5} \tfrac{5}{0 + 5} & g_{4} \tfrac{4}{1 + 4} & g_{3} \tfrac{3}{2 + 3} & g_{2} \tfrac{2}{3 + 2} & g_{1} \tfrac{1}{1 + 4} & \cdots\\
\vdots & \vdots & \vdots & \vdots & \vdots & \ddots
\end{bmatrix}.$$
When 
$$g(z) = \log \frac{1}{1 - z} = z + \tfrac{1}{2} z^2 + \tfrac{1}{3} z^3 + \cdots,$$
$[C_g]$ becomes the classical Ces\`{a}ro matrix.

For $g \in \mathcal{O}(\D)$, there is the generalized Hilbert operator $H_{g}$ on $\mathcal{O}(\D)$ defined by 
$$(H_{g} f)(z) = \int_{0}^{1} f(t) g'(t z) dt.$$
Notice the $f(t z)$ in the integrand of $C_g$ and $f(t)$ (without the $z$) in the integrand of $H_g$. A calculation shows that 
\begin{equation}\label{I2}
(H_{g} e_n)(z) = \sum_{k = 1}^{\infty} \frac{k}{n + k} g_k z^{k - 1}
\end{equation}
and thus, with respect to $(e_n)_{n \geq 0}$, the matrix representation of $H_g$ is 
$$[H_g] = \begin{bmatrix}
g_1 \tfrac{1}{0 + 1} & g_1 \tfrac{1}{1 + 1} & g_{1} \tfrac{1}{2 + 1} &g_1 \tfrac{1}{3 + 1} & g_1 \tfrac{1}{4 + 1} & \cdots\\
g_{2} \tfrac{2}{0 + 2} & g_{2} \tfrac{2}{1 + 2} & g_{2} \tfrac{2}{2 + 2} & g_{2} \tfrac{2}{3 + 2} & g_2 \tfrac{2}{4 + 2} & \cdots\\
g_{3} \tfrac{3}{0 + 3} & _{3} \tfrac{3}{1 + 3} & g_{3} \tfrac{3}{2 + 3} & g_{3} \tfrac{3}{3 + 3} & g_{3} \tfrac{3}{4 + 3} & \cdots\\
g_{4} \tfrac{4}{0 + 4}  &  g_{4} \tfrac{4}{1 + 4} &  g_{4} \tfrac{4}{2 + 4} &  g_{4} \tfrac{4}{3 + 4} &  g_{4} \tfrac{4}{4 + 4} & \cdots\\
g_{5} \tfrac{5}{0 + 5} & g_{5} \tfrac{5}{1 + 5} & g_{5} \tfrac{5}{2 + 5} & g_{5} \tfrac{5}{3 + 5} & g_{5} \tfrac{5}{4 + 5} & \cdots\\
\vdots & \vdots & \vdots & \vdots & \vdots & \ddots
\end{bmatrix}$$
As before with $C_g$, when 
$$g(z) = \log \frac{1}{1 - z} = z + \tfrac{1}{2} z^2 + \tfrac{1}{3} z^3 + \cdots,$$
$[H_g]$ becomes the classical Hilbert matrix.

The identities \eqref{I1} and \eqref{I2} show, as before with the Hilbert and Ces\`{a}ro matrices in  \eqref{88hterraced}, that one can transform $[H_{g}]$ into $[C_{g}]$ by moving the $n$th column of $H_g$ down $n$ places and filling in the blank spaces with zeros.

\bibliographystyle{plain}

\bibliography{references}

\end{document}